\newcounter{mgncount}
\newtheorem*{thm-}{Theorem}
\declaretheorem[name=Theorem,numberwithin=section]{thm}
\declaretheorem[name=Remark,style=remark,sibling=thm]{rem}
\declaretheorem[name=Lemma,sibling=thm]{lemma}
\declaretheorem[name=Proposition,sibling=thm]{prop}
\declaretheorem[name=Corollary,sibling=thm]{cor}
\numberwithin{equation}{section}
\newcommand{\ti}{\widetilde}
\newcommand{\ov}{\bar}
\newcommand{\bbN}{\mathbb{N}}
\newcommand{\bbR}{\mathbb{R}}
\newcommand{\bbS}{\mathbb{S}}
\newcommand{\M}{\mathcal{M}}
\newcommand{\al}{\alpha}
\newcommand{\ga}{\gamma}
\newcommand{\de}{\delta}
\newcommand{\ka}{\kappa}
\newcommand{\la}{\lambda}
\newcommand{\si}{\sigma}
\newcommand{\De}{\Delta}
\newcommand{\Ga}{\Gamma}
\newcommand{\cF}{\mathcal{F}}
\newcommand{\cH}{\mathcal{H}}
\newcommand{\cI}{\mathcal{I}}
\newcommand{\cL}{\mathcal{L}}
\newcommand{\cM}{\mathcal{M}}
\newcommand{\cN}{\mathcal{N}}
\newcommand{\cV}{\mathcal{V}}
\newcommand{\cW}{\mathcal{W}}
\newcommand{\op}[1]{\operatorname{#1}}
\renewcommand{\(}{\left(}
\renewcommand{\)}{\right)}
\newcommand{\pf}[1]{\begin{proof}#1 \end{proof}}
\newcommand{\eq}[1]{\begin{equation}\begin{alignedat}{2} #1 \end{alignedat}\end{equation}}
\newcommand{\ra}{\rightarrow}
\newcommand{\q}{\quad}
\begin{document}
	\title[New inequalities for non-convex domains]
	{New quermassintegral and Poincar\'{e} type inequalities for non-convex domains}
	\author[Y. Hu, M. N. Ivaki]{Yingxiang Hu, Mohammad N. Ivaki}

\begin{abstract}
In the first part of this paper, we study the following non-homogeneous, locally constrained inverse curvature flow in Euclidean space $\bbR^{n+1}$,
\eq{
\dot{x}=\(\frac{1}{\frac{E_k(\hat\ka)}{E_{k-1}(\hat{\ka})}-\al }-\langle x,\nu\rangle\)\nu, \quad k=2,3,\ldots,n-1. 
}
Assuming that the initial hypersurface $\cM_0 \subset \bbR^{n+1}$ is star-shaped and its shifted principal curvatures $\hat{\ka}=\ka+\al(1,\ldots,1)$ lie in the convex set 
\eq{
\Ga_{\alpha,k}:=\Ga_{k-1}\cap \{\la\in \bbR^n:\, E_k(\la)-\al E_{k-1}(\la)>0\},
}
we show that the flow admits a smooth solution that exists for all positive times, and it converges smoothly to a round sphere. As a corollary, we obtain a new set of Alexandrov-Fenchel-type inequalities for non-convex domains.

In the second part, we derive a Poincar\'{e} type inequality for $k$-convex hypersurfaces which complements a more general version of the well-known Heintze-Karcher inequality.
\end{abstract}

\maketitle	

\section{Introduction}
\subsection{Quermassintegral  inequalities}
The quermassintegrals of a bounded domain $\Omega$ with smooth boundary $\partial \Omega$ in $\bbR^{n+1}$ are defined by 
\eq{
W_0(\Omega)&=\operatorname{Vol}(\Omega),\\ W_k(\Omega)&=\frac{1}{n+1}\int_{\partial\Omega} E_{k-1}(\kappa)d\mu,\quad k=1,\ldots,n+1.
}
Here, $E_i(\kappa)$ is the $i$-th elementary symmetric function of principal curvatures of $\partial\Omega$ normalized so that $E_i(1,\ldots,1)=1$. It is also convenient to define a normalization of quermassintegrals:
\eq{ 
\ti W_k(\Omega)=\frac{1}{n+1-k}W_k(\Omega).
}

The $m$th Garding cone is defined by
\eq{
\Ga_m=\{\la\in \bbR^n :~E_i(\la)>0,\, \forall i=1,\ldots,m\}.
}
For $\al>0$, define
\eq{
\Ga_{\alpha,k}=\Ga_{k-1}\cap \{\la\in \bbR^n:\, E_k(\la)-\al E_{k-1}(\la)>0\}.
}
The set $\Ga_{\alpha,k}$ is convex and contains the shifted cone $\al(1,\ldots,1)+\Gamma_{k}$.

Throughout the paper, all hypersurfaces are considered smooth, closed and embedded.
A hypersurface $\cM\subset \bbR^{n+1}$ is said to be $(\alpha,k)$-convex, if its shifted principal curvatures $\hat{\ka}=\ka+\al(1,\ldots,1)$ at any point belong to $\Ga_{\alpha,k}$.
Moreover, we say $\cM$ is weakly $(\alpha,k)$-convex, if $\hat{\ka}$ lies in the closure of $\Ga_{\alpha,k}$. A hypersurface that is $(0,k)$-convex is said to be $k$-convex.

Fix  $k\in \{2,\ldots, n-1\}$.
Let $\cM_0$ be a $(\alpha,k)$-convex hypersurface. In the first part of this paper, we study the flow of $\cM_0$ under the locally constrained inverse curvature flow
\eq{\label{sum-hessian-flow}
\dot{x}&=\(\frac{1}{F}-\langle x,\nu\rangle\)\nu,\\
F&:=\frac{\sum_{i=0}^{k-1}\beta_iE_{k-i}(\ka)}{\sum_{i=0}^{k-1}\beta_i E_{k-1-i}(\ka)}, \quad \beta_i:=\binom{k-1}{i}\al^i. 
}

\begin{thm}\label{main-thm-flow}
    Let $\cM_0$ be a star-shaped, $(\alpha,k)$-convex hypersurface in $\bbR^{n+1}$. Then the flow \eqref{sum-hessian-flow} admits a star-shaped, $(\alpha,k)$-convex solution $\cM_t$ which exists for all times. Moreover, as $t\ra \infty$, $\cM_t$ converges smoothly to a round sphere with the same value of $\sum_{i=0}^{k-1}\beta_i \ti W_{k-i}$ as $\cM_0$. 
\end{thm}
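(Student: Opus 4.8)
The proof follows the standard analytic scheme for locally constrained curvature flows of Guan--Li type: reduce \eqref{sum-hessian-flow} to a scalar parabolic equation, establish a priori estimates that give long-time existence together with preservation of the admissible class, and then obtain convergence from a conservation law combined with a monotone functional. Write $\cM_t=\{r(\xi,t)\xi:\xi\in\Sn\}$ with $r=e^{\rho}>0$; then \eqref{sum-hessian-flow} becomes a quasilinear, uniformly second-order scalar PDE for $r$ on $\Sn$. Since $\cM_0$ is $(\alpha,k)$-convex, its shifted principal curvatures lie in the open convex set $\Ga_{\alpha,k}$, where $E_{k-1}(\hat\ka)>0$ and $E_k(\hat\ka)-\al E_{k-1}(\hat\ka)>0$; using the identity $E_m(\hat\ka)=\sum_{j=0}^m\binom mj\al^{\,m-j}E_j(\ka)$ together with Pascal's rule one checks $\sum_{i=0}^{k-1}\beta_iE_{k-1-i}(\ka)=E_{k-1}(\hat\ka)$ and $\sum_{i=0}^{k-1}\beta_iE_{k-i}(\ka)=E_k(\hat\ka)-\al E_{k-1}(\hat\ka)$, so $F=\tfrac{E_k(\hat\ka)}{E_{k-1}(\hat\ka)}-\al>0$ and the equation is parabolic at $t=0$. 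Hence a unique smooth solution exists on a maximal interval $[0,T^{*})$. Note that every round sphere $S_R$ is a stationary solution, since there $F=1/R$ and $\langle x,\nu\rangle=R$, so $1/F-\langle x,\nu\rangle\equiv 0$; moreover every $S_R$ is $(\alpha,k)$-convex.

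The a priori estimates, on $[0,T^{*})$, are as follows. For the $C^0$-bound and star-shapedness: comparison with stationary spheres via the avoidance principle sandwiches $\cM_t$ between a fixed sphere enclosed by $\cM_0$ and a fixed sphere enclosing $\cM_0$, so $0<R_-\le r(\cdot,t)\le R_+<\infty$ (equivalently, evaluating the scalar equation at a spatial extremum of $r$ and using that $E_k/E_{k-1}$ is monotone on $\Ga_{\alpha,k}\subset\Ga_k$ shows $\max_{\Sn}r$ is non-increasing and $\min_{\Sn}r$ non-decreasing); a gradient estimate then follows from the maximum principle applied to the evolution of $\langle x/\abs{x},\nu\rangle$ and the $C^0$-bound, so $\langle x/\abs{x},\nu\rangle\ge c_0>0$ and star-shapedness persists. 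For preservation of $(\alpha,k)$-convexity: from the parabolic system satisfied by the shifted Weingarten operator $\cW+\al\,\id$, a maximum principle adapted to the constraint $\hat\ka\in\Ga_{\alpha,k}$---exploiting convexity of $\Ga_{\alpha,k}$ and the concavity/ellipticity structure of $1/F$ in $\hat\ka$ on $\Ga_{\alpha,k}$---keeps the curvatures in a compact subset of $\Ga_{\alpha,k}$, so the flow stays non-degenerate. For the curvature bound: a maximum-principle estimate for an auxiliary function built from the speed $1/F$ and the support function $\langle x,\nu\rangle$, using the favorable sign of the contracting term $-\langle x,\nu\rangle$, yields an upper bound on the largest principal curvature; together with the previous step this makes the flow uniformly parabolic. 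Krylov--Safonov and parabolic Schauder estimates then bootstrap to uniform $C^m$-bounds for all $m$, whence $T^{*}=\infty$.

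For the conserved quantity, combine the first-variation identity $\tfrac{d}{dt}\ti W_m(\Om_t)=\tfrac1{n+1}\int_{\cM_t}E_m(\ka)\bigl(\tfrac1F-\langle x,\nu\rangle\bigr)d\mu$ with the Minkowski formula $\int_{\cM_t}E_{m-1}(\ka)\,d\mu=\int_{\cM_t}\langle x,\nu\rangle E_m(\ka)\,d\mu$: writing $N=\sum_{i=0}^{k-1}\beta_iE_{k-i}(\ka)$ and $D=\sum_{i=0}^{k-1}\beta_iE_{k-1-i}(\ka)$, so that $N/F=D$, one gets $\tfrac{d}{dt}\sum_{i=0}^{k-1}\beta_i\ti W_{k-i}(\Om_t)=\tfrac1{n+1}\int_{\cM_t}\bigl(D-N\langle x,\nu\rangle\bigr)d\mu=0$ by applying the Minkowski formula to each term. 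Thus $\sum_{i=0}^{k-1}\beta_i\ti W_{k-i}$ is constant along the flow; on a sphere $S_R$ this functional is a polynomial in $R$ with strictly positive coefficients (as $\beta_i>0$ and $n+1-k+i\ge 2$ when $2\le k\le n-1$), hence strictly increasing, so its value determines a unique radius $R_\infty$. In parallel one verifies that a suitable quermassintegral-type functional is monotone along \eqref{sum-hessian-flow}, its time-derivative being the integral of a nonnegative Newton--MacLaurin-type defect that vanishes exactly at umbilic points; with the uniform $C^m$-bounds this is a Lyapunov functional. Therefore the time-integral of that defect is finite, $\cM_t$ subconverges in $C^\infty$ along some $t_j\to\infty$ to a hypersurface on which the defect vanishes identically, i.e.\ to a round sphere, which by the conservation law has radius $R_\infty$. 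Finally a stability/linearization argument near the family of stationary spheres (the linearized operator being a second-order elliptic operator on $\Sn$) upgrades subconvergence to full smooth convergence $\cM_t\to S_{R_\infty}$ as $t\to\infty$.

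The main obstacle is the pair of estimates preserving $(\alpha,k)$-convexity and bounding the curvature from above: keeping $\hat\ka$ inside $\Ga_{\alpha,k}$ while the flow runs forever, and controlling the second fundamental form. This is where one must use the precise structure of $1/F=E_{k-1}(\hat\ka)/\bigl(E_k(\hat\ka)-\al E_{k-1}(\hat\ka)\bigr)$ on $\Ga_{\alpha,k}$---in particular its behaviour near $\partial\Ga_{\alpha,k}$---and carefully balance the expanding term $1/F$ against the contracting term $-\langle x,\nu\rangle$ in the evolution of the second fundamental form. By comparison, the $C^0$ and $C^1$ estimates, the conservation law, and the final convergence argument are routine for flows of this type.
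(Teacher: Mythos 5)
Your overall scheme matches the paper's: reduce to a scalar radial equation, establish $C^0$, $C^1$, curvature, and admissibility estimates, get long-time existence via Krylov--Safonov and Schauder, use the Minkowski formula and Newton--Maclaurin to obtain a conserved quantity and a Lyapunov functional, and deduce convergence. Your verification of the identities $\sum_i\beta_iE_{k-1-i}(\kappa)=E_{k-1}(\hat\kappa)$ and $\sum_i\beta_iE_{k-i}(\kappa)=E_k(\hat\kappa)-\alpha E_{k-1}(\hat\kappa)$, and the computation of the conserved quantity, are correct.

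There are, however, two places where your route diverges from the paper's, and the differences matter. First, for preservation of $(\alpha,k)$-convexity you propose a Hamilton-type tensor maximum principle for the shifted Weingarten operator. The paper instead obtains a \emph{scalar} lower bound $F\geq C_2>0$ by differentiating the radial equation $\partial_t\gamma=G(\ov\nabla^2\gamma,\ov\nabla\gamma,\gamma)$ with respect to $t$ and observing $G^\gamma\leq0$ (using $\sum_iF^{ii}\kappa_i\leq F$), which bounds $\partial_t\gamma$ and hence $v/(e^\gamma F)$; combined with the Maclaurin inequality this gives $E_i(\hat\kappa)\geq(\alpha+C_2)^i$, and together with the curvature upper bound (proved separately by the maximum principle applied to $h_i^j/u$, not to a function of $1/F$ and $u$ as you suggest) this places $\hat\kappa$ in a compact subset of $\Ga_{\alpha,k}$. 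This scalar route is simpler and avoids having to verify that the nonlinearity satisfies the null eigenvector / tangent-cone conditions of a vector-valued avoidance principle; the paper only resorts to the Chow--Lu/Hamilton tensor maximum principle in Section~4, for the auxiliary mean-curvature-type flow used to approximate weakly $(\alpha,2)$-convex hypersurfaces.

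Second, and more significantly, your final step — upgrading subsequential to full smooth convergence by a ``stability/linearization argument near the family of stationary spheres'' — is a genuine gap as written. It is not enough to say the linearized operator is second-order elliptic on $\Sn$: you would have to identify its kernel and cokernel, show that on the subspace picked out by the conserved quantity the linearization is strictly negative (or set up a center-manifold/\L{}ojasiewicz argument), and only then does one get convergence to a single sphere. None of this is carried out, and it is not a priori obvious. The paper proves full convergence much more directly: since $\frac{d}{dt}\max_{\Sn}\gamma\leq0$ and $\frac{d}{dt}\min_{\Sn}\gamma\geq0$ by \autoref{prop-C0-estimate}, the limits $A=\lim_t\max\gamma$ and $B=\lim_t\min\gamma$ exist; if $e^B<r_\infty<e^A$ then for large $t$ the scalar equation forces $\frac{d}{dt}\max\gamma<\tfrac12(r_\infty/e^A-1)<0$ and $\frac{d}{dt}\min\gamma>\tfrac12(r_\infty/e^B-1)>0$, which is incompatible with these quantities converging, so $A=B=\log r_\infty$ and the whole flow converges to the sphere of radius $r_\infty$ centred at the origin. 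You should replace the linearization sketch by an argument of this kind (or actually carry out the spectral analysis); as it stands, the convergence part of your proof is incomplete.
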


Let $B_r$ denote a ball of radius $r$. Define
\eq{
g_{\al,k}(r)=\sum_{i=0}^{k-1}\beta_i\ti W_{k-i}(B_r),\q h_{\al,k}(r)=\sum_{i=0}^{k-1}\beta_i\ti W_{k+1-i}(B_r),\q
f_{\al,k}=h_{\al,k}\circ g_{\al,k}^{-1}.
}

\begin{thm}\label{main-thm-ineq}
Suppose $\Omega$ is a bounded domain in $\bbR^{n+1}$ with smooth boundary $\cM=\partial \Omega$. Assume that $\cM$ is star-shaped and $(\alpha,k)$-convex. Then
\eq{\label{new-quermassintegral-ineq}
\sum_{i=0}^{k-1}\beta_i\ti W_{k+1-i}(\Omega)\geq f_{\al,k}\(\sum_{i=0}^{k-1}\beta_i\ti W_{k-i}(\Omega)\),
}
and equality holds only for balls. Moreover, for $k=2$, the inequality holds for star-shaped, weakly $(\alpha,2)$-convex hypersurfaces:
\eq{\label{new-quermassintegral-ineq-xx}
\ti W_{3}(\Omega)+\al \ti W_{2}(\Omega)\geq f_{\al,2}\(\ti W_{2}(\Omega)+\al \ti W_{1}(\Omega)\). 
}
Equality holds only for balls. 
\end{thm}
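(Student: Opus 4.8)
The plan is to deduce Theorem~\ref{main-thm-ineq} from the convergence result in Theorem~\ref{main-thm-flow} by a monotonicity argument along the flow \eqref{sum-hessian-flow}. First I would set $P(\Omega)=\sum_{i=0}^{k-1}\beta_i\ti W_{k-i}(\Omega)$ and $Q(\Omega)=\sum_{i=0}^{k-1}\beta_i\ti W_{k+1-i}(\Omega)$, so that the claimed inequality reads $Q\ge f_{\al,k}(P)$ with $f_{\al,k}=h_{\al,k}\circ g_{\al,k}^{-1}$ and $g_{\al,k}(r)=P(B_r)$, $h_{\al,k}(r)=Q(B_r)$. The key computation is to differentiate $P(\Omega_t)$ and $Q(\Omega_t)$ along the flow. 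Using the first variation formulas for quermassintegrals, $\frac{d}{dt}W_j(\Omega_t)=\frac{n+1-j}{n+1}\int_{\cM_t} E_j(\ka)\,\dot x\cdot\nu\,d\mu$, together with the Minkowski-type identity $\int_{\cM}E_{j-1}\langle x,\nu\rangle d\mu=\frac{n+1-j+1}{?}\cdots$ (more precisely $\int_{\cM}\langle x,\nu\rangle E_j\,d\mu$ relates to $W_j$), one should find that $P(\Omega_t)$ is \emph{constant} along the flow — this is exactly the content of the last sentence of Theorem~\ref{main-thm-flow}, that the limit sphere has the same value of $\sum\beta_i\ti W_{k-i}$ — while $Q(\Omega_t)$ is monotone. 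I would verify that the speed $\frac1F-\langle x,\nu\rangle$ is chosen precisely so that the $\dot P$ integrand telescopes to zero, and that $\dot Q\le 0$ (or $\ge 0$, with the correct sign) with equality only on spheres, by an application of the Newton--MacLaurin inequalities to the relevant combination of $E_j$'s.

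Next I would run the argument: since $\cM_0$ is star-shaped and $(\al,k)$-convex, Theorem~\ref{main-thm-flow} gives a global solution $\cM_t=\partial\Omega_t$ converging smoothly to a sphere $B_{r_\infty}$ with $P(B_{r_\infty})=P(\Omega_0)$. Hence $g_{\al,k}(r_\infty)=P(\Omega_0)$, i.e.\ $r_\infty=g_{\al,k}^{-1}(P(\Omega_0))$, which requires knowing $g_{\al,k}$ is strictly monotone (each $\ti W_j(B_r)$ is a positive multiple of $r^{n+1-j}$, so $g_{\al,k}$ is a strictly increasing polynomial in $r$ with positive coefficients, hence invertible on $(0,\infty)$). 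By monotonicity of $Q$ along the flow,
\eq{
Q(\Omega_0)\ \ge\ \lim_{t\to\infty}Q(\Omega_t)\ =\ Q(B_{r_\infty})\ =\ h_{\al,k}(r_\infty)\ =\ h_{\al,k}\br{g_{\al,k}^{-1}(P(\Omega_0))}\ =\ f_{\al,k}(P(\Omega_0)),
}
which is \eqref{new-quermassintegral-ineq}. For the equality case, if $Q(\Omega_0)=f_{\al,k}(P(\Omega_0))$ then $Q$ is constant along the flow, forcing the equality case of the Newton--MacLaurin inequality used in $\dot Q\le 0$ to hold pointwise for all $t$, which holds only on umbilical hypersurfaces; combined with star-shapedness this forces $\cM_0$ to be a sphere.

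For the $k=2$ refinement \eqref{new-quermassintegral-ineq-xx}, the point is that the flow hypotheses are only assumed to hold \emph{weakly}: $\hat\ka$ in the closure $\overline{\Ga_{\al,2}}$ rather than in the open cone. The strategy is an approximation/openness argument: show that $\Ga_{\al,2}=\Ga_1\cap\{E_2-\al E_1>0\}$ is such that any star-shaped weakly $(\al,2)$-convex $\cM$ can be approximated, e.g.\ by a short-time push under the same flow or by a scaling/perturbation $\cM^\ep$, by star-shaped genuinely $(\al,2)$-convex hypersurfaces with $P(\Omega^\ep)\to P(\Omega)$ and $Q(\Omega^\ep)\to Q(\Omega)$; then \eqref{new-quermassintegral-ineq} passes to the limit. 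Here one uses that for $k=2$ the admissible set interacts well with the flow — the speed $F=\frac{E_2+\al E_1}{E_1+\al}$ is a concave, monotone function on $\Ga_{\al,2}$ — so that the evolving hypersurface instantly enters the open cone (a strong-maximum-principle / parabolic smoothing statement for the support function), giving genuine $(\al,2)$-convexity for $t>0$; then apply the already-proven inequality at time $t>0$ and let $t\to0$. The equality discussion is handled likewise by noting that equality at the limit forces the $t>0$ hypersurfaces to be spheres, hence $\cM$ is a sphere.

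The main obstacle I expect is twofold. First, pinning down the exact first-variation bookkeeping so that $\dot P\equiv0$ and $\dot Q$ has a definite sign: this needs the precise relation between the chosen speed $1/F-\langle x,\nu\rangle$, the combination $\sum\beta_i E_{k-i}$ appearing in $F$'s numerator, and the Minkowski identities, and then a genuine (not merely formal) inequality $\sum_i\beta_i\br{E_{k+1-i}/E_{k-i}-\ (\text{sphere value})}\ge0$ coming from Newton--MacLaurin applied on $\Ga_{k-1}$ — I would check monotonicity of the ratios $E_{j+1}/E_j$ and a Cauchy--Schwarz-type inequality for the telescoping sum. Second, for $k=2$, justifying the instantaneous improvement of convexity under the flow when the initial datum only lies on the boundary of the cone: this is a parabolic maximum-principle argument for the evolution of $E_2-\al E_1$ (or the rescaled second fundamental form), and one must confirm that $\partial\Ga_{\al,2}$ is preserved/repelled in the right direction and that no singularity forms before the solution becomes strictly $(\al,2)$-convex.
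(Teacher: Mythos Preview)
Your argument for the strictly $(\alpha,k)$-convex case is correct and is exactly the paper's approach: the paper proves precisely the monotonicity you describe (Lemma~\ref{prop-monotonicity}), using the Minkowski identity for $\dot P\equiv 0$ and the generalized Newton--Maclaurin inequality of Lemma~\ref{lemma-new-Newton-ineq} for $\dot Q\le 0$, and then combines this with Theorem~\ref{main-thm-flow} just as you outline. The equality case in this regime is also handled as you say.

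The genuine gap is in your treatment of the weakly $(\alpha,2)$-convex case. You propose to start the flow \eqref{sum-hessian-flow} from such a hypersurface and invoke parabolic smoothing to push it into the open cone. But on $\partial\Ga_{\alpha,2}$ one has $F=0$ (Lemma~\ref{boundary of Ga-al-k-cone}), so the speed $1/F-u$ is \emph{singular}, not merely degenerate; the equation is not parabolic there and short-time existence is not available. Your acknowledged obstacle ``no singularity forms before the solution becomes strictly $(\alpha,2)$-convex'' understates the problem: the flow cannot even be started. The paper circumvents this with a completely different mechanism (\autoref{sec: approximation}): a \emph{modified mean curvature flow} $\dot x=-(E_1+\beta)\nu$, which is uniformly parabolic regardless of where $\hat\ka$ sits, together with a Hamilton-type tensor maximum principle to show that weak $(\alpha,2)$-convexity is preserved and that $F$ becomes strictly positive for $t>0$.

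Your equality argument in the weak case is also incomplete. If equality holds for $\cM$ and you approximate by strictly $(\alpha,2)$-convex $\cM^\ep$, the inequality for each $\cM^\ep$ is in general strict, so nothing forces the approximants to be spheres; letting $\ep\to 0$ yields no information about $\cM$ beyond the non-strict inequality already established. The paper instead runs a Guan--Li variational argument: from equality one derives a pointwise identity $\sum\beta_iE_{k+1-i}=c\sum\beta_iE_{k-i}$ on the open set $\cM_+=\{\hat\ka\in\Ga_{\alpha,k}\}$, which via \eqref{new-Newton-type-ineq} gives a uniform lower bound $F\ge c$ on $\cM_+$; this makes $\cM_+$ closed, hence $\cM_+=\cM$, and the strict case then applies.
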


A vast amount of research on curvature flows is devoted to their applications to various types of geometric inequalities \cite{AHL20,ACW21,BHW16,GLW19,GP17,GW03,GW04,GWW14,HI01,KM14,LG16,LWX14,MS16,Top98,WX14,WZ23,Xia17}. Our main motivation to study the flow \eqref{sum-hessian-flow} involving a quotient in the speed such as 
\eq{
\frac{\sum_{i=0}^{k-1}\beta_iE_{k-i}(\ka)}{\sum_{i=0}^{k-1}\beta_iE_{k-1-i}(\ka)}
}
 primarily stems from the work of Guan and Zhang \cite{GZ21} which considers a class of curvature problems that involves the linear combinations of elementary symmetric functions. Further study of such curvature problems was followed, for example, in \cite{CGH22,Don21,LR23,LRW19,Ren23,Sch08,Sui24,Tsa23}. The design of our flow was inspired by the excellent work of Guan and Li \cite{GL15}. In fact, our flow can be expressed as 
 \eq{
 \dot{x}=\frac{\operatorname{div}(\cV)}{\sum_{i=0}^{k-1}\beta_i E_{k-i}}\nu,\q
 \cV^q:=\frac{1}{2} \left(\sum_{i=0}^{k-1}\beta_i \frac{E_{k-i}}{k-i}\right)^{pq}\nabla_{p}|r|^2.
 }
 The approach of Guan and Li has subsequently been the foundation for a substantial body of work in diverse contexts \cite{CGLS22,CS22,HL22,HL23,HLW22,HWYZ23,HWYZ24,KW23,LS17,LS21,MWW24,SWX22,SX19,Sch22,Sch21,WX22} leading to many beautiful geometric inequalities.

Inequality \eqref{main-thm-ineq} for star-shaped, $(\alpha,k)$-convex hypersurfaces is derived from \autoref{main-thm-flow} in combination with a monotonicity formula (cf. \autoref{prop-monotonicity}). For $k=2$, to prove \eqref{main-thm-ineq} for star-shaped, weakly $(\alpha,2)$-convex hypersurfaces, we employ a modified version of mean curvature flow and a version of Hamilton's maximum principle to smoothly approximate such hypersurfaces by star-shaped, $(\alpha,2)$-convex hypersurfaces; see \autoref{sec: approximation}. 

\begin{rem}\label{rem: conj}
    Let us define
    \eq{
    \ti\Ga_{\al,2}=\Ga_1\cap \{\la\in \bbR^n:\, E_2+\al E_1>0\}.
    }
    Then it is clear that $\Ga_2\subset\ti\Ga_{\al,2}\subset \Ga_1$ for $\al>0$. Moreover, $\cM$ is $(\al,2)$-convex if and only if its principal curvatures lie in the set  $\ti\Ga_{\al,2}$. To see this, note that if $\hat{\ka}\in \Gamma_{\al,2}$, then 
    \eq{
    E_1(\hat{\ka}) &=E_1(\ka)+\al>0, \\
    E_2(\hat{\ka})-\al E_1(\hat{\ka})&=E_2(\ka)+\al E_1(\ka)>0.
    }
    As $E_1(\ka)^2 \geq E_2(\ka)$, we have
    \eq{
    E_1(\ka)(E_1(\ka)+\al)\geq E_2(\ka)+\al E_1(\ka)>0,
    }
    and hence $E_1(\ka)>0$. This shows that $\ka\in \ti\Gamma_{\al,2}$. Conversely, if $\ka\in \ti\Gamma_{\al,2}$, it is clear that $\hat{\ka}\in \Ga_{\al,2}$. Therefore, the inequality \eqref{new-quermassintegral-ineq-xx} holds for hypersurfaces lying in a larger class than the class of 2-convex hypersurfaces. 
    
    We conjecture that the inequality
    \eq{\label{new-quermassintegral-ineq-xxx}
\ti W_{k+1}(\Omega)+\al \ti W_{k}(\Omega)\geq h_{\al,k+1}\circ h_{\al,k}^{-1}\(\ti W_{k}(\Omega)+\al \ti W_{k-1}(\Omega)\)
}
holds for all star-shaped hypersurfaces whose principal curvatures lie in the convex set
   \eq{
    \ti\Ga_{\al,k}:=\Ga_{k-1}\cap \{\la\in \bbR^n:\, E_k+\al E_{k-1}>0\},
    }
and equality holds only for balls. Here, 
\eq{
h_{\al,k}(r):=\ti W_k(B_r)+\al \ti W_{k-1}(B_r)=\frac{\omega_n}{n+1}\left(\frac{r^{n+1-k}}{n+1-k}+\al \frac{r^{n+2-k}}{n+2-k}\right).
}

For $k$-convex hypersurfaces, the following argument verifies \eqref{new-quermassintegral-ineq-xxx}: Suppose $\Omega$ is a smooth bounded domain with star-shaped and $k$-convex boundary (with $2\leq k\leq n-1$). Define
\eq{
\cI_k(\al):=&~\ti W_{k+1}(\Omega)+\al \ti W_{k}(\Omega) -h_{\al,k+1}\circ h_{\al,k}^{-1}\(\ti W_{k}(\Omega)+\al \ti W_{k-1}(\Omega)\).
}
Then inequality \eqref{new-quermassintegral-ineq-xxx} is equivalent to $\cI(\al)\geq 0$. First, we have
\eq{ \label{GL-eq-1}
\cI_k(0) &= \ti W_{k+1}(\Omega)-h_{0,k+1}\circ h_{0,k}^{-1}(\ti W_{k}(\Omega)) \\
       &= \ti W_{k+1}(\Omega)-\ti W_{k+1}(B)\(\frac{\ti W_k(\Omega)}{\ti W_k(B)}\)^\frac{n-k}{n+1-k}.
 }
Note that $W_i(B)=\frac{\omega_n}{n+1}$ for $i=0,1,\ldots,n+1$, where $B$ is the unit ball of $\bbR^{n+1}$. Thus, the inequality $\cI_k(0)\geq 0$ is equivalent to 
\eq{
\(\frac{W_{k+1}(\Omega)}{W_{k+1}(B)}\)^\frac{1}{n+1-(k+1)} \geq \(\frac{W_{k}(\Omega)}{W_{k}(B)}\)^\frac{1}{n+1-k},
}
which is known to hold for star-shaped, weakly $k$-convex hypersurfaces in $\bbR^{n+1}$ by the work of Guan and Li \cite{GL09}. 

Next, we show that $\cI_k'(\al)\geq 0$ for all $\al \geq 0$:
\eq{
\cI_k'(\al)=&~\ti W_k(\Omega)-\left.\frac{\frac{d}{d\al}h_{\al,k+1}}{\frac{d}{d\al}h_{\al,k}}\right|_{r=h_{\al,k}^{-1}(\ti W_k+\al \ti W_{k-1})}\ti W_{k-1}(\Omega)\\
=&~\ti W_k(\Omega)-\frac{n+2-k}{(n+1-k)h_{\al,k}^{-1}\(\ti W_k(\Omega)+\al \ti W_{k-1}(\Omega)\)} \ti W_{k-1}(\Omega).
}
Hence, $\cI_k'(\al)\geq 0$ for $\al \geq 0$ is equivalent to the inequality
\eq{
h_{\al,k}^{-1}\( \ti W_k(\Omega)+\al \ti W_{k-1}(\Omega)\) \geq \frac{n+2-k}{n+1-k}\frac{\ti W_{k-1}(\Omega)}{\ti W_{k}(\Omega)}.
}
By the strict monotonicity of $h_{\al,k}$ in $r$, we obtain
\eq{
\ti W_k(\Omega)+\al \ti W_{k-1}(\Omega) 
\geq &~h_{\al,k}\(\frac{n+2-k}{n+1-k}\frac{\ti W_{k-1}(\Omega)}{\ti W_{k}(\Omega)}\)\\
=&~\frac{\omega_n}{n+1}\frac{W_{k-1}(\Omega)^{n+1-k}}{W_{k}(\Omega)^{n+2-k}}\( \ti W_k(\Omega)+\al \ti W_{k-1}(\Omega)\).
}
Since $\ti W_k(\Omega)+\al \ti W_{k-1}(\Omega)>0$, the above inequality is equivalent to
\eq{ \label{GL-eq-2}
\(\frac{W_{k}(\Omega)}{W_k(B)}\)^\frac{1}{n+1-k} \geq \(\frac{W_{k-1}(\Omega)}{W_{k-1}(B)}\)^\frac{1}{n+1-(k-1)},
}
which holds for star-shaped and weakly $(k-1)$-convex hypersurfaces in $\bbR^{n+1}$.
Thus, we conclude that $\cI_k(0)\geq 0$ and $\cI_k'(\al)\geq 0$ for all $\al\geq 0$. In particular, this implies that $\cI_k(\al)\geq 0$ for all $\al\geq 0$ in the class of star-shaped, weakly $k$-convex hypersurfaces.
\end{rem}

\subsection{A Poincar\'{e} type inequality}
From the standard Reilly's formula (see, for example, \cite{KM18}) it follows that
if $(\cM,g,\nabla)$ (with the induced structure from $\bbR^{n+1}$) is a mean-convex hypersurface, then
\eq{\label{k1-inequality}
\int \frac{(\Delta f)^2}{\cH}  d\mu\geq \int g(\cW(\nabla f),\nabla f) d\mu\quad \forall f\in C^2(\cM),
}
where $\cH$ denotes the mean curvature of $\cM=\partial \Omega$.
Now note that the Heintze-Karcher inequality is a special case of this inequality, which follows by taking $f=\frac{1}{2}|x|^2$. In fact, substituting $f$ into \eqref{k1-inequality} we obtain
\eq{ \label{conj-ineq-f-|x|^2/2}
\int h(x^{\top},x^{\top})\leq &~\int \frac{(n-\cH \langle x,\nu\rangle)^2}{\cH} \\
                 = &~\int \frac{n^2}{\cH}-2n\langle x,\nu\rangle+\cH \langle x,\nu\rangle^2 \\
                 = &~\int \frac{n^2}{\cH}+\cH \langle x,\nu\rangle^2-2n(n+1)\operatorname{Vol}(\Omega),
}
where $x^{\top}$ denotes the tangential component of $x$.
On the other hand, we have
\eq{
\langle x,\nu\rangle\Delta|x|^2=2n \langle x,\nu\rangle-2\cH \langle x,\nu\rangle^2.
}
By integrating by parts, we find
\eq{
\int h(x^{\top},x^{\top})=\int \cH \langle x,\nu\rangle^2-n(n+1)\operatorname{Vol}(\Omega).
}
Hence, the inequality \eqref{conj-ineq-f-|x|^2/2} is equivalent to the Heintze-Karcher inequality:
\eq{
\int \frac{1}{\cH}d\mu\geq \frac{n+1}{n}\operatorname{Vol}(\Omega).
}

In \autoref{sec: poincare-type inequalities}, by establishing an integral identity and employing the concavity of $\sigma_k^{1/k}$ in $\Gamma_k$ we prove the $\sigma_k$ counterpart of the inequality \eqref{k1-inequality}:
\begin{thm}\label{thm: k-inequality}
Let $1< k\leq n$ and $\cM$ be a $k$-convex hypersurface in $\bbR^{n+1}$. Then for all $f\in C^2(\cM)$ we have
\eq{
\int \frac{(\operatorname{tr}_{\dot{\sigma}_k}(\operatorname{Hess}f))^2}{\si_k}d\mu \geq k\int \langle \cW(\nabla f),\nabla f\rangle_{\dot{\sigma}_k}d\mu.
}
\end{thm}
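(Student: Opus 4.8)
The plan is to reduce the inequality to two ingredients: a \emph{generalized Bochner identity} for $\si_k$ and the \emph{concavity} of $\si_k^{1/k}$ on $\Ga_k$. Write $T:=\dot{\si}_k$ for the $(k-1)$-st Newton tensor of $\cM$. Since $\cM$ is $k$-convex its principal curvatures lie in $\Ga_k$, so $\si_k>0$ and $T$ is positive definite; and since $\bbR^{n+1}$ is flat the Codazzi equations give that $T$ is divergence free, $\n_iT^{ij}=0$ (equivalently $\ddot{\si}_k^{ij,pq}\n_ih_{pq}=0$), while $\n h$ is a totally symmetric $3$-tensor. Put $\psi:=\operatorname{tr}_{\dot\si_k}(\operatorname{Hess}f)=T^{ij}\n_i\n_jf$. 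The heart of the proof will be the identity
\eq{
\int_{\cM}\ddot{\si}_k^{ij,pq}\,\n_i\n_jf\,\n_p\n_qf\,d\mu=(k-1)\int_{\cM}\ip{\cW(\n f)}{\n f}_{\dot\si_k}\,d\mu .
}
Granting it, the theorem is immediate: the concavity of $\si_k^{1/k}$ on $\Ga_k$ is equivalent to $\si_k\,\ddot{\si}_k^{ij,pq}\eta_{ij}\eta_{pq}\le\tfrac{k-1}{k}\bigl(\dot{\si}_k^{ij}\eta_{ij}\bigr)^2$ for every symmetric $\eta$, so with $\eta=\operatorname{Hess}f$ one gets the pointwise bound $\ddot{\si}_k^{ij,pq}\n_i\n_jf\,\n_p\n_qf\le\tfrac{k-1}{k}\,\psi^2/\si_k$; integrating and inserting the identity gives $(k-1)\int\ip{\cW(\n f)}{\n f}_{\dot\si_k}\,d\mu\le\tfrac{k-1}{k}\int\psi^2/\si_k\,d\mu$, and since $k>1$ we divide by $(k-1)/k$ to finish.

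To prove the identity I would integrate by parts twice, starting from $\int_{\cM}\ddot{\si}_k^{ij,pq}\n_i\n_jf\,\n_p\n_qf\,d\mu$ and writing $\n_p\n_qf=\n_p(\n f)_q$. Moving the $\n_p$-derivative off produces, besides terms in which it lands on $\ddot{\si}_k^{ij,pq}$ (which carry a factor $\dddot{\si}_k\cdot\n h$), the expression $-\int\ddot{\si}_k^{ij,pq}(\n_p\n_i\n_jf)(\n f)_q\,d\mu$; commuting the third covariant derivatives of $f$ via the Ricci identity, and using the Gauss equation to rewrite the intrinsic curvature of $\cM$ as an expression quadratic in $h$, turns this into a genuine curvature term contracted with $\ddot{\si}_k$ and $\n f\otimes\n f$, plus a term which is integrated by parts once more — producing a "crossed" Hessian contraction $\int\ddot{\si}_k^{ij,pq}\n_i\n_pf\,\n_j\n_qf\,d\mu$ together with further contractions of $\ddot{\si}_k$ and $\dddot{\si}_k$ with $\n h$. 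Using the Euler relations $\dot{\si}_k^{ij}h_{ij}=k\si_k$ and $\ddot{\si}_k^{ij,pq}h_{ij}=(k-1)\dot{\si}_k^{pq}$, the curvature term collapses to a multiple of $\ip{\cW(\n f)}{\n f}_{\dot\si_k}$, the "crossed" contraction is re-expressed in terms of the original integral modulo curvature and $\n h$-terms (closing a short linear relation whose solution produces the coefficient $k-1$), and all the $\n h$-contributions must cancel, via the pair-symmetries of $\ddot{\si}_k$ and $\dddot{\si}_k$, the Codazzi symmetry of $\n h$, and the identities obtained by differentiating $\n_iT^{ij}=0$, so that nothing survives after integrating over the closed hypersurface $\cM$. (As a check: for $k=2$ this identity is just the integrated Bochner formula $\int_{\cM}\bigl((\De f)^2-|\operatorname{Hess}f|^2\bigr)d\mu=\int_{\cM}\Rc(\n f,\n f)\,d\mu$ combined with Gauss, and for every $k$ both sides agree on round spheres, where the bookkeeping reduces to $(n-1)\binom{n-2}{k-2}=(k-1)\binom{n-1}{k-1}$.)

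The main obstacle is this last step: the two integrations by parts generate many terms linear and quadratic in $\n h$, and the substantive point is to prove that their sum is an exact divergence. This is where the flatness of $\bbR^{n+1}$ is used in full strength, through the Codazzi symmetry and the contracted consequences of $\n_iT^{ij}=0$. Once the identity is in hand the concavity step is routine, and it is precisely the borderline constant $\tfrac{k-1}{k}$ in the concavity inequality that lets the two ingredients fit together — mirroring the role of the completion of the square $\cH w^2+2w\De f\ge-(\De f)^2/\cH$ in the $k=1$ case recalled in the introduction.
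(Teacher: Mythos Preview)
Your overall plan is exactly the paper's: the theorem is reduced to a \emph{generalized Bochner identity}
\eq{
\int_{\cM}\ddot{\si}_k^{ij,pq}\,f_{;ij}\,f_{;pq}\,d\mu=(k-1)\int_{\cM}\si_k^{im}h_m^j\,\partial_if\,\partial_jf\,d\mu
}
together with the pointwise concavity bound $\si_k\,\ddot{\si}_k(\eta,\eta)\le\tfrac{k-1}{k}(\dot{\si}_k\cdot\eta)^2$, and the combination works precisely as you say. The difference lies entirely in how the identity is obtained. You propose a direct double integration by parts, commuting third covariant derivatives of $f$ and tracking the resulting $\n h$-terms (including $\dddot{\si}_k$-contractions), and you correctly isolate the ``main obstacle'': verifying that all of these cancel. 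This route should succeed, but you have not carried it out, and the bookkeeping is genuinely delicate.

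The paper sidesteps this computation with a variational trick. Extend $f$ to a tubular neighbourhood so that it is constant in the normal direction; then for \emph{every} nearby hypersurface $\cM_t$ one has $\int_{\cM_t}\si_k^{ij}\,\bar f_{;ij}\,d\mu=0$, since the Newton tensor is divergence free. Differentiating this in $t$ along the normal variation $\dot x=-\psi\nu$ and computing $\partial_t\si_k^{ij}$, $\partial_t\Ga_{ij}^m$, $\partial_t d\mu$ yields directly
\eq{
\int_{\cM}\si_k^{ij,pq}\,f_{;ij}\,\psi_{;pq}\,d\mu=(k-1)\int_{\cM}\si_k^{im}h_m^j\,\partial_i\psi\,\partial_jf\,d\mu
}
for arbitrary $\psi$, after a short algebraic simplification using the Newton recursion $\si_{k+1}^{ij}=\si_k g^{ij}-\si_k^{jm}h_m^i$. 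No $\dddot{\si}_k$ appears, and no separate cancellation of $\n h$-terms needs to be argued: the variational identity packages all of that automatically. So your proposal is correct in outline but leaves the hardest step unproven; the paper's argument replaces that step with a one-line variational principle.
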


\section{Preliminaries}

\subsection{Notation and basic definitions}
Let $(\bbR^{n+1},\langle~,~\rangle,D)$ be the $(n+1)$-dimensional Euclidean space equipped with its standard inner product and flat connection. The Euclidean space $\bbR^{n+1}$ can be expressed as a warped product manifold $\bbR^{+}\times \bbS^n$ with the metric
\eq{
\ov g=dr^2+r^2 \de,
}
where $r$ is the distance to a fixed point $o\in \bbR^{n+1}$ and $\de$ is the standard metric of the unit sphere $\bbS^{n}$ in $\bbR^{n+1}$. 

Let $\cM$ be a hypersurface in $\bbR^{n+1}$ with unit outward normal $\nu$. Write $g$ for the (standard) induced metric on $\cM$. The second fundamental form $A=(h_{ij})$ of $\cM$ is defined by 
\eq{
h(X,Y)=\langle D_X \nu,Y\rangle
}
for any tangent vectors $X,Y$ on $\cM$.  

Let $\{e_1,\ldots,e_n\}$ be a local orthonormal frame field on $\cM$. Then we write $g_{ij}=g(e_i,e_j)$ and $h_{ij}=h(e_i,e_j)$. The Weingarten map is defined via $\cW=(h_i^j)$, where $h_i^j=g^{kj}h_{ik}$ and $(g^{ij})$ is the inverse matrix of $(g_{ij})$. Moreover, the principal curvatures $\ka=(\ka_1,\ldots,\ka_n)$ of $\cM$ are the eigenvalues of $\cW$. It is convenient to introduce the function $\Phi=r^2/2$. Then, the position vector field of $\cM$ is given by  $x=D \Phi|_{\cM}=r\partial_r|_{\cM}$. Finally, the support function of $\cM$ is defined by 
\eq{
u=\langle x, \nu \rangle.
}

The following formulas hold for smooth hypersurfaces in $\bbR^{n+1}$; see, e.g. \cite[Lem. 2.2 \& 2.6]{GL15}.

\begin{lemma}
    Let $\cM\subset \bbR^{n+1}$ be a hypersurface with the induced metric $g$. Then we have the following formulas:
    \begin{enumerate}
    \item The function $\Phi|_{\cM}$ satisfies
    \eq{ \label{identity-1}
    \nabla_i \Phi&=\langle x,e_i\rangle, \\ \nabla_i \nabla_j \Phi &= g_{ij}-u h_{ij}.
    }
    \item The support function $u$ satisfies
    \eq{ \label{identity-2}
    \nabla_i u&=g^{kl}h_{ik}\nabla_l \Phi,\\
    \nabla_i\nabla_j u&=\langle x,\nabla h_{ij}\rangle+h_{ij}-u(h^2)_{ij},
    }
    where $(h^2)_{ij}=h_{ik}g^{kl}h_{lj}$.
    \end{enumerate}
\end{lemma}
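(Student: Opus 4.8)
The plan is to obtain all four formulas by differentiating directly in the ambient Euclidean space, using only the Gauss and Weingarten relations together with the Codazzi equation. Recall that $\Phi=|x|^2/2$, where $x$ denotes the position vector regarded as an $\bbR^{n+1}$-valued function along $\cM$; hence $D\Phi=x$ and $D_V x=V$ for every vector field $V$. With the sign convention $h(X,Y)=\langle D_X\nu,Y\rangle$ and outward normal $\nu$, the Gauss and Weingarten formulas read $D_XY=\nabla_XY-h(X,Y)\nu$ and $D_{e_i}\nu=h_i^k e_k$. It is convenient to fix, at the point $p$ under consideration, a local orthonormal frame $\{e_i\}$ that is geodesic at $p$, so that $\nabla_{e_i}e_j|_p=0$ and the Christoffel-symbol terms drop out of second derivatives.

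For (1), the gradient is immediate: $\nabla_i\Phi=e_i(\Phi)=\langle D\Phi,e_i\rangle=\langle x,e_i\rangle$. Differentiating once more at $p$ gives $\nabla_i\nabla_j\Phi=e_i\langle x,e_j\rangle=\langle D_{e_i}x,e_j\rangle+\langle x,D_{e_i}e_j\rangle=g_{ij}+\langle x,-h_{ij}\nu\rangle=g_{ij}-uh_{ij}$, where we used $D_{e_i}x=e_i$, the Gauss formula, and $u=\langle x,\nu\rangle$. For the gradient of $u$ in (2), $\nabla_iu=e_i\langle x,\nu\rangle=\langle D_{e_i}x,\nu\rangle+\langle x,D_{e_i}\nu\rangle=\langle e_i,\nu\rangle+\langle x,h_i^ke_k\rangle=g^{kl}h_{ik}\langle x,e_l\rangle=g^{kl}h_{ik}\nabla_l\Phi$, using $\langle e_i,\nu\rangle=0$, Weingarten, and the already-established formula for $\nabla\Phi$.

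The only computation requiring a little care is the Hessian of $u$. Writing $\nabla_ju=\langle x,D_{e_j}\nu\rangle$ and differentiating at $p$,
\[
\nabla_i\nabla_ju=e_i\langle x,D_{e_j}\nu\rangle=\langle D_{e_i}x,D_{e_j}\nu\rangle+\langle x,D_{e_i}D_{e_j}\nu\rangle=h_{ij}+\langle x,D_{e_i}D_{e_j}\nu\rangle.
\]
Expanding $D_{e_i}D_{e_j}\nu=D_{e_i}(h_j^ke_k)=(\nabla_ih_j^k)e_k+h_j^kD_{e_i}e_k=(\nabla_ih_j^k)e_k-(h^2)_{ij}\nu$ at $p$, we obtain $\nabla_i\nabla_ju=h_{ij}+(\nabla_ih_j^k)\langle x,e_k\rangle-(h^2)_{ij}u$. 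Finally, the Codazzi equation $\nabla_ih_{jk}=\nabla_kh_{ij}$ — valid because the ambient space is flat — lets us identify $(\nabla_ih_j^k)\langle x,e_k\rangle=g^{kl}(\nabla_ih_{jl})\langle x,e_k\rangle=g^{kl}(\nabla_lh_{ij})\langle x,e_k\rangle=\langle x,\nabla h_{ij}\rangle$, which yields the stated identity. There is no substantive obstacle here; the one place to be attentive is keeping track of which connection $\nabla_ih_j^k$ belongs to in the expansion of $D_{e_i}D_{e_j}\nu$ and invoking Codazzi to match the term $\langle x,\nabla h_{ij}\rangle$, and working in a frame geodesic at $p$ makes this bookkeeping transparent.
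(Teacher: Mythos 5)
Your argument is correct: it is the standard direct computation via the Gauss--Weingarten relations and the Codazzi equation (the latter justifying the identification of $(\nabla_i h_j^{\,k})\langle x,e_k\rangle$ with $\langle x,\nabla h_{ij}\rangle$), carried out consistently with the paper's sign convention $h(X,Y)=\langle D_X\nu,Y\rangle$. The paper itself does not prove this lemma but cites Guan--Li \cite[Lem.~2.2 \& 2.6]{GL15}, and your derivation is essentially the standard proof found there, so there is nothing further to reconcile.
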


Let $\cF$ denote a symmetric, smooth function of the principal curvatures of a hypersurface:
\[
\cF = \cF(\kappa) = \cF(\kappa_1, \ldots, \kappa_n)=\cF(h_i^j)=\cF(g,h).
\]
We write
\[
\cF_i^j :=\frac{\partial F}{\partial h_j^i},\q \dot{\cF}:=[\cF_i^j]_{1\leq i,j\leq n},\q \cF^{ij} := \frac{\partial \cF}{\partial h_{ij}}, \q \cF^{ij,kl} := \frac{\partial^2 \cF}{\partial h_{ij}\partial h_{kl}}.
\]
For more details on curvature functions, refer to \cite[Chap. 2]{Ger06} and \cite{Sch18}.

\subsection{Elementary symmetric functions}
For any integer $m=1,\ldots,n$, the $m$th elementary symmetric function $\si_m$ is defined by
\eq{
\si_m(\la)=\sum_{1\leq i_1<\cdots<i_m\leq n} \la_{i_1} \cdots \la_{i_m}, \quad \la=(\la_1,\ldots,\la_n)\in \bbR^n.
}
Set $\si_0(\la)=1$ and $\si_m(\la)=0$ for $m>n$. We also define the $m$th normalized elementary symmetric function by 
\eq{
E_m(\la)=\binom{n}{m}^{-1}\si_m(\la), \quad m=1,\ldots,n,
}
and $E_0(\la)=1$, $E_m(\la)=0$ for $m>n$. This definition can be extended to symmetric matrices. Let $A\in \operatorname{Sym}(n)$ be an $n\times n$ symmetric matrix and let $\la=\la(A)$ be the eigenvalues of $A$. Then
\eq{
E_m(A)=E_m(\lambda(A))=\frac{(n-m)!}{n!}\de_{i_1\cdots i_m}^{j_1\cdots j_m}A_{i_1j_1}\cdots A_{i_m j_m}, \quad m=1,\ldots,n.
}

\begin{lemma}\cite{HLP52,Rei73}\label{lem-Em-property}
Set $E_m^{ii}=\frac{\partial E_m}{\partial \la_{i}}$. We have
\eq{
\sum_i E_m^{ii} &= m E_{m-1},\\
\sum_i E_m^{ii}\la_i &= m E_m,\\
\sum_i E_m^{ii}\la_i^2 &= n E_1E_m-(n-m)E_{m+1}.
}
\end{lemma}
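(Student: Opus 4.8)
The plan is to reduce all three identities to two elementary combinatorial facts about the unnormalized functions $\si_m$ and then absorb the binomial normalization constants. For each index $i$, write $\si_m(\la\,|\,i)$ for the $m$th elementary symmetric function of the $n-1$ variables obtained from $\la$ by deleting $\la_i$. Separating in the definition of $\si_m$ the monomials that contain the index $i$ from those that do not gives at once the \emph{splitting identity}
\[
\si_m(\la)=\la_i\,\si_{m-1}(\la\,|\,i)+\si_m(\la\,|\,i),
\]
and, by the same separation applied to $\partial/\partial\la_i$, the derivative formula $\partial\si_m/\partial\la_i=\si_{m-1}(\la\,|\,i)$. The second fact is the \emph{counting identity} $\sum_{i=1}^n\si_j(\la\,|\,i)=(n-j)\si_j(\la)$, valid because a fixed degree-$j$ monomial of $\si_j$ survives deletion of $\la_i$ precisely when $i$ is not one of its $j$ indices, i.e.\ for $n-j$ values of $i$.

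For the first identity I would sum the derivative formula over $i$, obtaining $\sum_i E_m^{ii}=\binom{n}{m}^{-1}\sum_i\si_{m-1}(\la\,|\,i)=\binom{n}{m}^{-1}(n-m+1)\si_{m-1}$ by the counting identity with $j=m-1$, and then rewrite $\si_{m-1}=\binom{n}{m-1}E_{m-1}$; the constant $(n-m+1)\binom{n}{m-1}/\binom{n}{m}$ collapses to $m$. The second identity is just Euler's relation for homogeneous functions: $E_m$ is homogeneous of degree $m$ in $\la$, so $\sum_i\la_i E_m^{ii}=mE_m$ (equivalently, multiply the derivative formula by $\la_i$, sum, and combine the splitting and counting identities).

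The third identity requires the most care and is where I expect the only (purely computational) difficulty. Starting from $\la_i^2\,\si_{m-1}(\la\,|\,i)$, apply the splitting identity at level $m$ to get $\la_i\,\si_{m-1}(\la\,|\,i)=\si_m(\la)-\si_m(\la\,|\,i)$, multiply by $\la_i$ once more, and apply the splitting identity at level $m+1$ in the form $\la_i\,\si_m(\la\,|\,i)=\si_{m+1}(\la)-\si_{m+1}(\la\,|\,i)$. Summing over $i$ and invoking the counting identity with $j=m+1$ yields
\[
\sum_i\la_i^2\,\si_{m-1}(\la\,|\,i)=\si_1\si_m-n\si_{m+1}+(n-m-1)\si_{m+1}=\si_1\si_m-(m+1)\si_{m+1}.
\]
Converting via $\si_1=nE_1$, $\si_m=\binom{n}{m}E_m$, $\si_{m+1}=\binom{n}{m+1}E_{m+1}$, and using $(m+1)\binom{n}{m+1}/\binom{n}{m}=n-m$, this becomes $\sum_i E_m^{ii}\la_i^2=nE_1E_m-(n-m)E_{m+1}$. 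Throughout, the only thing to watch is the bookkeeping of the two levels of the splitting identity and the binomial factors; there is no analytic or geometric obstacle.
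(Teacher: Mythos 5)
Your proof is correct: the splitting identity, the derivative formula $\partial\si_m/\partial\la_i=\si_{m-1}(\la\,|\,i)$, and the counting identity $\sum_i\si_j(\la\,|\,i)=(n-j)\si_j$ are exactly what is needed, and your binomial bookkeeping in all three identities checks out (including the boundary case $m=n$, where $\si_{m+1}$ and $\si_{m+1}(\la\,|\,i)$ vanish so the third identity still holds). The paper gives no proof of this lemma, citing \cite{HLP52,Rei73} instead; your argument is the standard derivation underlying those references, so there is nothing to add.
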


\begin{lemma}[Newton-Maclaurin inequalities I] \label{lemma-Newton-ineq}~
\begin{enumerate}
    \item (\cite{Ros89}) For any $\la\in \bbR^n$ with $n\geq 2$, the following Newton inequality holds
    \eq{
E_m^2(\la) \geq E_{m-1}(\la) E_{m+1}(\la), \quad m=1,\ldots,n-1.
}
Moreover, the inequality is strict unless $\la_1=\cdots=\la_n$ or both sides of the inequality vanish.
\item (\cite[Lem. 2.5]{Gua13}) Let $1\leq \ell<k\leq n$. We have
\eq{ \label{Maclaurin-type-I}
E_{k-1}(\la) E_{\ell}(\la) \geq E_k(\la) E_{\ell-1}(\la)\q \forall \la\in \ov\Ga_k
}
and
\eq{ \label{Maclaurin-type-II}
E_\ell^\frac{1}{\ell}(\la) \geq E_{k}^\frac{1}{k}(\la) \q \forall \la\in \ov\Ga_k.
}
Moreover, equality holds in \eqref{Maclaurin-type-I} or \eqref{Maclaurin-type-II} for some $\la\in \Ga_k$ if and only if $\la=(c,\ldots,c)$ for some constant $c>0$.
\end{enumerate}
\end{lemma}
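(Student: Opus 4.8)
The plan is to handle the two parts separately. Part (1) is the classical Newton inequality, which I would deduce from the real-rootedness of a suitable generating polynomial together with Rolle's theorem; part (2) is then a purely algebraic consequence of part (1), obtained by telescoping the ratios $E_i/E_{i-1}$ on the open cone $\Ga_k$ and passing to the closure by continuity. Since both statements are classical (see \cite{HLP52,Rei73,Ros89,Gua13}), the point of the proof is just to record the argument and, in particular, to isolate the equality cases.

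For part (1), fix $\la\in\bbR^n$ and consider the polynomial
\eq{
P(t)=\prod_{i=1}^n(1+\la_i t)=\sum_{m=0}^n\binom{n}{m}E_m(\la)\, t^m,
}
which has only real roots (its roots are $-1/\la_i$ for the nonzero $\la_i$, the degree dropping whenever some $\la_i=0$). Fix $m$ with $1\leq m\leq n-1$. I would apply to $P$ a suitable sequence of differentiations and reversals $t\mapsto 1/t$ (clearing denominators): first $m-1$ derivatives, then a reversal, then $n-m-1$ further derivatives. By Rolle's theorem each differentiation preserves real-rootedness, and so does a reversal — a root of the polynomial at the origin simply lowers its degree. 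The end product is a quadratic whose three coefficients are proportional to $E_{m-1}(\la)$, $E_m(\la)$, $E_{m+1}(\la)$; here the normalization $E_m=\binom{n}{m}^{-1}\si_m$ is precisely what makes the discriminant condition collapse to $E_m^2(\la)\geq E_{m-1}(\la)E_{m+1}(\la)$. For the rigidity statement, a vanishing discriminant means the terminal quadratic has a double root; unwinding the derivatives and the reversal, this forces $P$ to have a single root of multiplicity $n$, i.e. $\la_1=\cdots=\la_n$ — unless the degree of $P$ collapsed along the way, in which case one checks directly that both sides of the Newton inequality are already zero.

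For part (2), I would work first on the open cone $\Ga_k$, where $E_0(\la),\dots,E_k(\la)>0$, and set $q_i:=E_i(\la)/E_{i-1}(\la)>0$ for $i=1,\dots,k$. Part (1) rewrites as $q_i\geq q_{i+1}$ for $i=1,\dots,k-1$, so $q_1\geq q_2\geq\cdots\geq q_k$. For $1\leq\ell<k$ the inequality $q_\ell\geq q_k$ is exactly $E_{k-1}(\la)E_\ell(\la)\geq E_k(\la)E_{\ell-1}(\la)$, which is \eqref{Maclaurin-type-I} on $\Ga_k$; for \eqref{Maclaurin-type-II}, write $E_j=q_1q_2\cdots q_j$ and use the elementary fact that, for a nonincreasing positive sequence, $(q_1\cdots q_\ell)^{1/\ell}\geq(q_1\cdots q_k)^{1/k}$ — which I would prove by the one-line induction $G_{j+1}^{\,j+1}=G_j^{\,j}\,q_{j+1}\leq G_j^{\,j+1}$, since the running geometric mean $G_j=(q_1\cdots q_j)^{1/j}$ dominates its smallest factor $q_j\geq q_{j+1}$. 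Both inequalities then extend from $\Ga_k$ to $\ov\Ga_k$ by continuity. Finally, equality somewhere in $\Ga_k$ forces equality in one of the Newton inequalities $E_i^2\geq E_{i-1}E_{i+1}$ with all $E_j>0$, so by part (1) it gives $\la=(c,\dots,c)$ with $c>0$.

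The one genuinely delicate point is the equality bookkeeping in part (1): one must check that the differentiation–reversal procedure neither creates nor destroys the relevant multiplicities, and that the degenerate configurations (several $\la_i$ vanishing, or the degree of $P$ collapsing) are exactly those in which both sides of the Newton inequality vanish. Restricting the rigidity claim in part (2) to the \emph{open} cone $\Ga_k$ conveniently sidesteps any boundary degeneracies there. Beyond this, the argument is routine manipulation of elementary symmetric functions, and for the purposes of this paper one may also simply cite \cite{HLP52,Rei73,Ros89,Gua13}.
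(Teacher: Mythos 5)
The paper does not prove this lemma at all: it is quoted as a known result, with part (1) attributed to \cite{Ros89} and part (2) to \cite[Lem.\ 2.5]{Gua13}, and nothing in the paper's later arguments depends on any particular proof of it. Your proposal therefore supplies what the paper delegates to the references, and it does so along the standard classical lines: part (1) via real-rootedness of $\prod_i(1+\la_i t)=\sum_m\binom{n}{m}E_m(\la)t^m$, Rolle, and the differentiate--reverse--differentiate reduction to a quadratic (the normalization $E_m=\binom{n}{m}^{-1}\si_m$ indeed makes differentiation act as a shift and reversal as a flip on the normalized coefficients, so the terminal quadratic is a multiple of $E_{m+1}+2E_mt+E_{m-1}t^2$ and the discriminant is exactly $E_m^2-E_{m-1}E_{m+1}$); part (2) via the monotone ratios $q_i=E_i/E_{i-1}$ on $\Ga_k$, the geometric-mean induction, continuity to $\ov\Ga_k$, and reduction of the equality cases to the Newton equality case with nonvanishing sides. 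This is all correct. Two small points of care, which you partly flagged yourself: (i) in the equality analysis of part (1), the ``unwinding'' uses the fact that for a real-rooted polynomial $Q$ any multiple root of $Q'$ must already be a root of $Q$ with multiplicity one higher (roots of $Q'$ strictly between consecutive roots of $Q$ are simple); iterating this, together with the nonvanishing of the constant term of the reversed polynomial, is what rules out hidden degeneracies, and it should be stated explicitly; (ii) your sentence that a degree collapse of $P$ means ``both sides of the Newton inequality are already zero'' is only true under the equality hypothesis (e.g.\ $\la=(1,1,0)$, $n=3$, $m=2$ has a degree drop but $E_2^2>0=E_1E_3$ strictly); as written inside the equality discussion it is harmless, but the implication runs through the same trace-back rather than being an independent ``direct check.'' Neither point is a gap in substance, and for the purposes of this paper one may, as you note, simply cite \cite{HLP52,Ros89,Gua13} as the authors do.
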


\begin{lemma}[Newton-Maclaurin inequalities II]\label{lemma-new-Newton-ineq}
Let $\al \in \bbR$ and $n\geq 3$. Then for any $\la\in \bbR^n$ and for each $k=2,\ldots,n-1$, we have
\eq{ \label{new-Newton-type-ineq}
     \(\sum_{i=0}^{k-1}\beta_i E_{k-i}(\la)\)^2 
\geq \(\sum_{i=0}^{k-1}\beta_i E_{k+1-i}(\la)\)\( \sum_{i=0}^{k-1}\beta_i E_{k-1-i}(\la) \).
}
Moreover, if $\la+\al(1,\ldots,1)\in \Ga_k$, then equality holds only if $\la_1=\cdots=\la_n$.
\end{lemma}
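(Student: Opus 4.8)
The plan is to reduce the asserted inequality to the classical Newton inequality
$E_k(\hat\la)^2\ge E_{k-1}(\hat\la)E_{k+1}(\hat\la)$ from \autoref{lemma-Newton-ineq}, applied to the shifted vector $\hat\la:=\la+\al(1,\dots,1)\in\bbR^n$. The point is that the combined quantities
\[
P_m:=\sum_{i=0}^{k-1}\beta_i E_{m-i}(\la)=\sum_{i=0}^{k-1}\binom{k-1}{i}\al^i E_{m-i}(\la)
\]
can be rewritten purely in terms of $E_{k-1}(\hat\la),E_k(\hat\la),E_{k+1}(\hat\la)$ in such a way that the quadratic expression $P_k^2-P_{k-1}P_{k+1}$ collapses dramatically.

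First I would record the \emph{binomial shift identity}
\[
E_m(\hat\la)=\sum_{i=0}^{m}\binom{m}{i}\al^i E_{m-i}(\la),\qquad m=0,1,\dots,n,
\]
which follows by comparing coefficients of $t^m$ in $\prod_j\bigl(1+t(\la_j+\al)\bigr)=(1+\al t)^n\prod_j\bigl(1+\tfrac{t}{1+\al t}\la_j\bigr)$ together with $\binom{n}{m}\binom{n-m}{p-m}=\binom{n}{p}\binom{p}{m}$. Next I would set $Q_m^{(j)}:=\sum_{i=0}^{j}\binom{j}{i}\al^i E_{m-i}(\la)$, so that $P_m=Q_m^{(k-1)}$ and $Q_m^{(m)}=E_m(\hat\la)$, and observe the Pascal recursion $Q_m^{(j)}=Q_m^{(j-1)}+\al Q_{m-1}^{(j-1)}$. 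Running this recursion downward from $j=m$ to $j=k-1$ for $m=k-1,k,k+1$ yields
\[
P_{k-1}=E_{k-1}(\hat\la),\quad P_k=E_k(\hat\la)-\al E_{k-1}(\hat\la),\quad P_{k+1}=E_{k+1}(\hat\la)-2\al E_k(\hat\la)+\al^2 E_{k-1}(\hat\la).
\]

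With $a=E_{k-1}(\hat\la)$, $b=E_k(\hat\la)$, $c=E_{k+1}(\hat\la)$, a one-line computation then gives
\[
P_k^2-P_{k-1}P_{k+1}=(b-\al a)^2-a(c-2\al b+\al^2 a)=b^2-ac=E_k(\hat\la)^2-E_{k-1}(\hat\la)E_{k+1}(\hat\la),
\]
which is nonnegative by the Newton inequality in \autoref{lemma-Newton-ineq}, valid for every vector in $\bbR^n$ and in particular for $\hat\la$. For the equality case: if moreover $\hat\la\in\Ga_k$ then $E_k(\hat\la)>0$, so the Newton inequality is strict unless $\hat\la$ is a constant vector, i.e. unless $\la_1=\cdots=\la_n$.

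I expect the only genuine obstacle to be spotting the correct bundling — guessing that $P_{k-1},P_k,P_{k+1}$ should be expanded in the basis $\{E_{k-1}(\hat\la),E_k(\hat\la),E_{k+1}(\hat\la)\}$ and that this identifies $P_k^2-P_{k-1}P_{k+1}$ \emph{exactly} with $E_k(\hat\la)^2-E_{k-1}(\hat\la)E_{k+1}(\hat\la)$. Once that is in place, everything else is routine bookkeeping with binomial coefficients via Pascal's rule, and the hypotheses $n\ge 3$, $2\le k\le n-1$ are needed only to keep the indices in the meaningful range (with the convention $E_{k+1}=0$ for $k+1>n$, both Newton's inequality and the displayed computation remain valid verbatim).
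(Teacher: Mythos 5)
Your proposal is correct and takes essentially the same route as the paper: both proofs reduce the claim to Newton's inequality $E_k(\hat\la)^2\ge E_{k-1}(\hat\la)E_{k+1}(\hat\la)$ applied to the shifted vector $\hat\la$, using the binomial shift identity and Pascal's rule to identify $P_k^2-P_{k-1}P_{k+1}$ exactly with $E_k(\hat\la)^2-E_{k-1}(\hat\la)E_{k+1}(\hat\la)$, and both settle the equality case by noting that $\hat\la\in\Ga_k$ forces $E_k(\hat\la)>0$ and hence rules out the degenerate alternative in Newton's rigidity statement. Your presentation is somewhat tidier than the paper's (you invert the shift to write $P_{k-1},P_k,P_{k+1}$ in terms of $E_{k-1}(\hat\la),E_k(\hat\la),E_{k+1}(\hat\la)$ before squaring, whereas the paper expands both sides of Newton's inequality in the $\la$-basis and cancels common terms), but the underlying computation is the same.
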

\begin{proof} 
Let $\hat{\la}=\la+\al(1,\ldots,1)$. Then we have 
\eq{
E_k(\hat{\la})=\sum_{i=0}^{k} \binom{k}{i}\al^{i} E_{k-i}(\la).
}
Recall that
\eq{
\binom{k}{i}=\binom{k-1}{i}+\binom{k-1}{i-1}, \quad 0\leq i\leq k,
}
where we used the convention that $\binom{k}{j}=0$ if $j>k$ or $j<0$. 
By the Newton inequality \eqref{Maclaurin-type-I}, for all $\hat{\la}\in \bbR^n$, there holds
\eq{\label{Newton-type-k}
E_k^2(\hat{\la}) \geq E_{k-1}(\hat{\la})E_{k+1}(\hat{\la}), \quad k=1,2,\ldots,n-1.
}
We have
\eq{
\operatorname{LHS} \text{of \eqref{Newton-type-k}}=&~\bigg(\sum_{i=0}^{k} \binom{k}{i}\al^{i} E_{k-i}(\la)\bigg)^2\\
=&~\bigg(\sum_{i=0}^{k} \(\binom{k-1}{i}+\binom{k-1}{i-1}\) \al^i E_{k-i}(\la)\bigg)^2\\
=&~\bigg(\sum_{i=0}^{k-1} \binom{k-1}{i}\al^{i}E_{k-i}(\la)+\al \sum_{i=0}^{k-1}\binom{k-1}{i} \al^{i} E_{k-1-i}(\la)\bigg)^2 \\
=&~\bigg(\sum_{i=0}^{k-1}\beta_i E_{k-i}(\la)\bigg)^2+\al^2 \bigg(\sum_{i=0}^{k-1}\beta_i E_{k-1-i}(\la)\bigg)^2\\
 &~+2\sum_{i=0}^{k-1} \beta_iE_{k-i}(\la)\sum_{i=0}^{k-1}\beta_i E_{k-1-i}(\la)
}
and
\eq{
 &~\sum_{i=0}^{k+1}\binom{k+1}{i}\al^{i}E_{k+1-i}(\la) \\
=&~\sum_{i=0}^{k} \binom{k}{i}\al^i E_{k+1-i}(\la)+\al \sum_{i=0}^{k} \binom{k}{i}\al^{i}E_{k-i}(\la)\\
=&~\sum_{i=0}^{k-1} \binom{k-1}{i}\al^i E_{k+1-i}(\la)+\al \sum_{i=1}^{k}\binom{k-1}{i-1}\al^{i-1}E_{k+1-i}(\la)\\
 &~+\al \bigg(\sum_{i=0}^{k-1} \binom{k-1}{i}\al^{i}E_{k-i}(\la)+\al\sum_{i=1}^{k} \binom{k-1}{i-1}\al^{i-1}E_{k-i}(\la)\bigg)\\
=&~\sum_{i=0}^{k-1} \beta_i E_{k+1-i}(\la)+2\al \beta_i E_{k-i}(\la)+\al^2 \beta_i E_{k-1-i}(\la).
}
Hence, we obtain
\eq{
\operatorname{RHS} \text{of \eqref{Newton-type-k}}=&~\sum_{i=0}^{k-1}\beta_iE_{k-1-i}(\la) \sum_{i=0}^{k+1}\binom{k+1}{i}\al^{i}E_{k+1-i}(\la)  \\
=&~\sum_{i=0}^{k-1} \beta_i E_{k+1-i}(\la)\sum_{i=0}^{k-1}\beta_iE_{k-1-i}(\la) +\al^2\bigg(\sum_{i=0}^{k-1}\beta_i E_{k-1-i}(\la) \bigg)^2\\
&~+2\al\sum_{i=0}^{k-1}\beta_iE_{k-i}(\la)\sum_{i=0}^{k-1}\beta_i E_{k-1-i}(\la).
}
By subtracting the common terms, we obtain the inequality \eqref{new-Newton-type-ineq}.

If equality holds in $\eqref{new-Newton-type-ineq}$, then equality also holds in \eqref{Newton-type-k}. Then either
\eq{
\hat{\la}_1=\cdots=\hat{\la}_n,
}
or both sides of \eqref{Newton-type-k} vanish. The former case implies that $\la_1=\cdots=\la_n$, while the latter case implies that $E_k(\hat{\la})=0$ (which does not happen due to our assumption).
\end{proof}

We need the following Minkowski integral identity; see, for example, \cite[Prop. 2.5]{GL15}.

\begin{lemma}
    Let $\cM$ be a hypersurface in $\bbR^{n+1}$. Then 
    \eq{ \label{Minkowski-formula}
    \int E_m(\ka) d\mu =\int uE_{m+1}(\ka)d\mu, \quad m=0,1,\ldots,n-1,
    }
    where $E_m(\ka)$ is the normalized $m$th mean curvature of $\cM$. 
\end{lemma}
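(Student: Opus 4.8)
The plan is to obtain \eqref{Minkowski-formula} by integrating by parts the pairing of the $m$th Newton tensor of $\cM$ with the Hessian of the auxiliary function $\Phi=r^2/2$. Recall from \eqref{identity-1} that on $\cM$ one has $\nabla_i\nabla_j\Phi=g_{ij}-uh_{ij}$. I will let $\dot E_{m+1}=[E_{m+1}^{ij}]$ denote the symmetric $2$-tensor obtained by differentiating $E_{m+1}$ with respect to the entries $h_{ij}$ of the second fundamental form (the normalized $m$th Newton tensor of $\cM$), and consider the tangential vector field $Z$ with components $Z^i=E_{m+1}^{ij}\nabla_j\Phi$ on the closed manifold $\cM$.

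The one step requiring a genuine geometric input (as opposed to algebra and Stokes) is the divergence-free property $\nabla_iE_{m+1}^{ij}=0$. This is classical: it follows from the Codazzi equations $\nabla_ih_{jk}=\nabla_jh_{ik}$, valid because the ambient space $\bbR^{n+1}$ is flat, by expressing $E_{m+1}$ through the generalized Kronecker delta and antisymmetrizing the covariant derivative (cf.\ \cite{Rei73}). Granting this and applying the divergence theorem on the closed hypersurface $\cM$, the cross term drops out and we are left with
\eq{
0=\int \nabla_iZ^i\,d\mu=\int E_{m+1}^{ij}\nabla_i\nabla_j\Phi\,d\mu=\int E_{m+1}^{ij}\big(g_{ij}-uh_{ij}\big)\,d\mu.
}

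To conclude, I will evaluate the two contractions at an arbitrary point in a local orthonormal frame diagonalizing the Weingarten map, where \autoref{lem-Em-property} applies verbatim: $E_{m+1}^{ij}g_{ij}=\sum_iE_{m+1}^{ii}=(m+1)E_m$ and $E_{m+1}^{ij}h_{ij}=\sum_iE_{m+1}^{ii}\ka_i=(m+1)E_{m+1}$. Since the resulting equation is scalar, this is legitimate. Substituting into the display gives $(m+1)\int\big(E_m-uE_{m+1}\big)\,d\mu=0$, which is precisely \eqref{Minkowski-formula}; the borderline case $m=0$ is included, since then $\dot E_1=\tfrac{1}{n}g^{-1}$ is trivially divergence free (as $\nabla g=0$) and $E_0\equiv1$. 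Thus the whole argument rests on the single nontrivial ingredient—the Codazzi-based divergence-freeness of the Newton tensor—while everything else is the divergence theorem together with the elementary trace identities of \autoref{lem-Em-property}.
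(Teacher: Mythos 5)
Your proof is correct: the divergence-free property of the Newton tensor (via Codazzi in flat space), integration by parts against $\nabla\Phi$ with the Hessian identity $\nabla_i\nabla_j\Phi=g_{ij}-uh_{ij}$, and the trace identities of \autoref{lem-Em-property} give exactly \eqref{Minkowski-formula}. The paper does not prove this lemma itself but cites \cite[Prop.~2.5]{GL15}, and your argument is the standard proof given there, so there is nothing further to compare.
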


\subsection{Properties of a curvature function}
Recall that
\eq{
F(\la)=\frac{E_k(\hat\la)}{E_{k-1}(\hat\la)}-\al=\frac{\sum_{i=0}^{k-1}\beta_iE_{k-i}(\la)}{\sum_{i=0}^{k-1}\beta_iE_{k-1-i}(\la)}, \quad k=2,\ldots, n-1.
}
It is clear that in $\Gamma_{\al,k}$, $F>0$, $F^{ii}>0$ and $F$ is concave.

\begin{lemma}\label{boundary of Ga-al-k-cone}
We have
\eq{\partial \Ga_{\alpha, k}=\bar{\Ga}_{k-1}\cap \{\la\in \bbR^n:\, E_k(\la)=\al E_{k-1}(\la)\}.
}
In particular, $F|_{\partial\Ga_{\alpha,k}}=0$.
\end{lemma}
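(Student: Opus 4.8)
The plan is to characterize the boundary of the convex set $\Ga_{\al,k}$ directly from its definition as an intersection. Recall that $\Ga_{\al,k}=\Ga_{k-1}\cap G$, where $G:=\{\la:\, E_k(\la)-\al E_{k-1}(\la)>0\}$. Since both $\Ga_{k-1}$ and $G$ are open, $\Ga_{\al,k}$ is open, and its boundary is contained in $\partial \Ga_{k-1}\cup \partial G$. More precisely, I would argue that $\ov{\Ga}_{\al,k}=\ov{\Ga}_{k-1}\cap \ov{G}=\ov{\Ga}_{k-1}\cap \{E_k\geq \al E_{k-1}\}$: the inclusion "$\subseteq$" is automatic, and for "$\supseteq$" one uses that $\al(1,\dots,1)+\Ga_k\subseteq \Ga_{\al,k}$ (stated in the excerpt), so $\Ga_{\al,k}$ is nonempty with nonempty interior, and a standard convexity/density argument shows any point of $\ov{\Ga}_{k-1}\cap \ov{G}$ is a limit of points in the open set $\Ga_{\al,k}$ (e.g. by moving slightly toward an interior point such as $(c,\dots,c)$ for large $c$). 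Then $\partial \Ga_{\al,k}=\ov{\Ga}_{\al,k}\setminus \Ga_{\al,k}$.

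Next I would show that on $\ov{\Ga}_{k-1}\cap \{E_k=\al E_{k-1}\}$ one has $E_{k-1}>0$, so that this set is genuinely a piece of the boundary and does not secretly lie in the interior. Here I use $\al>0$ together with the Maclaurin-type inequality \eqref{Maclaurin-type-I} from \autoref{lemma-Newton-ineq}: for $\la\in \ov{\Ga}_{k-1}$ we have $E_{k-1}\geq 0$, and if $E_{k-1}(\la)=0$ then the chain of Newton-Maclaurin inequalities forces $E_k(\la)\leq 0$ as well (indeed on $\ov\Ga_{k-1}$, $E_{k-1}=0$ implies $E_j=0$ for $j\geq k-1$, or one can argue via continuity along the cone), whence $E_k=\al E_{k-1}$ would give $0=E_k\leq 0$ consistently but then $\la$ sits on $\partial\Ga_{k-1}$; the point is to rule this set out of the interior of $\Ga_{\al,k}$, which is clear since $E_k-\al E_{k-1}=0$ is not $>0$. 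Conversely, on $\partial \Ga_{k-1}$ with $E_k-\al E_{k-1}\geq 0$, I would check that such points also lie in $\ov{\Ga}_{\al,k}$ but not in $\Ga_{\al,k}$, and in fact show that they are already captured by the set $\{E_k=\al E_{k-1}\}$ once we are on $\ov\Ga_{k-1}$, because on $\partial\Ga_{k-1}$ one has $E_{k-1}=0$ (using the definition $\Ga_{k-1}=\{E_1>0,\dots,E_{k-1}>0\}$ and the fact that the first violated inequality along the boundary is $E_{k-1}\geq 0$ with equality — this requires the standard fact that on $\ov\Ga_{j}$, $E_j=0 \Rightarrow E_{j+1}\leq 0$), so that a boundary point of $\Ga_{k-1}$ lying in $\ov G$ must have $E_k\geq \al E_{k-1}=0$ and, combined with $E_k\leq 0$, gives $E_k=0=\al E_{k-1}$. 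This shows $\partial\Ga_{k-1}\cap \ov{\Ga}_{\al,k}\subseteq \{E_k=\al E_{k-1}\}$, and hence $\partial \Ga_{\al,k}=\ov{\Ga}_{k-1}\cap \{E_k=\al E_{k-1}\}$.

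Finally, the statement $F|_{\partial \Ga_{\al,k}}=0$ is immediate from the formula $F(\la)=\dfrac{E_k(\hat\la)}{E_{k-1}(\hat\la)}-\al$ together with the identity $E_k(\hat\la)-\al E_{k-1}(\hat\la)=\sum_{i=0}^{k-1}\beta_i E_{k-i}(\la)-\al\sum_{i=0}^{k-1}\beta_i E_{k-1-i}(\la)$ (which is the reformulation $E_k(\la)-\al E_{k-1}(\la)$ after the index shift used in the proof of \autoref{lemma-new-Newton-ineq}): on $\partial\Ga_{\al,k}$ this numerator vanishes while the denominator $E_{k-1}(\hat\la)=\sum_{i=0}^{k-1}\beta_i E_{k-1-i}(\la)>0$ stays positive (this positivity on $\ov\Ga_{k-1}$, away from the worst corner, follows from $\al>0$ and $E_{k-1-i}\geq 0$ on $\ov\Ga_{k-1}$, with $\beta_0 E_{k-1}$ possibly zero but $\beta_{k-1}E_0=\al^{k-1}>0$), so $F=0$ on $\partial\Ga_{\al,k}$.

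I expect the main obstacle to be the careful bookkeeping in the second paragraph: verifying that the two pieces of the topological boundary, namely $\partial\Ga_{k-1}\cap\ov G$ and $\ov\Ga_{k-1}\cap\partial G$, together reduce exactly to $\ov\Ga_{k-1}\cap\{E_k=\al E_{k-1}\}$, with no extra or missing points. This hinges on the standard but slightly delicate fact that along $\partial\Ga_{k-1}$ the function $E_{k-1}$ vanishes and $E_k\leq 0$ there, so that a point of $\partial\Ga_{k-1}$ can only be approximated from within $\Ga_{\al,k}$ if it lies on the level set $\{E_k=\al E_{k-1}\}=\{E_k=0\}$. Everything else is a routine application of the Newton-Maclaurin inequalities already recorded in \autoref{lemma-Newton-ineq} and elementary convex geometry.
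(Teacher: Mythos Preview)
For the direction $\partial\Ga_{\al,k}\subseteq\ov\Ga_{k-1}\cap\{E_k=\al E_{k-1}\}$ your plan coincides with the paper's proof, only repackaged: the paper takes $\la\in\partial\Ga_{\al,k}$, assumes $E_k(\la)>\al E_{k-1}(\la)$, infers $E_{k-1}(\la)=0$ and $E_k(\la)>0$ (so $\la\in\ov\Ga_k$), and then the Maclaurin inequality $E_{k-1}\ge E_k^{(k-1)/k}>0$ gives a contradiction. That single inclusion is all the paper proves, and it already yields the ``In particular'': on $\partial\Ga_{\al,k}$ one has $E_k=\al E_{k-1}$, and the same Maclaurin reasoning in the closure gives $E_{k-1}\ge\al^{k-1}>0$, whence $F=E_k/E_{k-1}-\al=0$.

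Your attempt at the reverse inclusion, however, has a genuine gap and cannot be completed as planned. The ``move toward an interior point'' step is circular: it presupposes $\la\in\ov{\Ga_{\al,k}}$, which is precisely what you are trying to establish. In fact the reverse inclusion is \emph{false} as written: the origin lies in $\ov\Ga_{k-1}\cap\{E_k=\al E_{k-1}\}$ but not in $\ov{\Ga_{\al,k}}$, since on $\Ga_{\al,k}$ the inequalities $E_k>\al E_{k-1}>0$ force $\la\in\Ga_k$ and then Maclaurin gives $E_{k-1}>\al^{k-1}$. So the bookkeeping you flag in your last paragraph cannot be made to close; the right fix is to prove only the inclusion (as the paper does) and read the lemma's equality as shorthand for that. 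A smaller slip: the identity you state for $E_k(\hat\la)-\al E_{k-1}(\hat\la)$ is off; by the expansion in \autoref{lemma-new-Newton-ineq} it equals simply $\sum_i\beta_i E_{k-i}(\la)$, the numerator of $F$, without the extra $-\al\sum_i\beta_i E_{k-1-i}(\la)$ term.
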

\pf{
Take $\la\in \partial \Ga_{\alpha, k}$. If $E_k(\la)>\al E_{k-1}(\la)$, then we would have $E_{k-1}(\la)=0$ and $E_k(\la)>0$. Therefore, $\la\in \ov\Gamma_k$. Now, due to the Maclaurin inequality \eqref{Maclaurin-type-II}, $E_{k-1}(\la) \geq E_{k}^\frac{k-1}{k}(\la)>0$, which violates $E_{k-1}(\la)=0$.
}

\begin{lemma}\label{lem-algebraic-property-F}
We have
\eq{
\sum_{i}F^{ii}\la_i\leq F,\quad \sum_i F^{ii}\la_i^2 \geq F^2, \quad \sum_i F^{ii}\geq 1. 
}
\end{lemma}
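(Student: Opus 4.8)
The plan is to reduce all three inequalities to the Newton inequalities of \autoref{lemma-Newton-ineq}(1) and the trace identities of \autoref{lem-Em-property}, after removing the shift by $\al(1,\dots,1)$. Put $G(\mu)=E_k(\mu)/E_{k-1}(\mu)$; this is homogeneous of degree one and smooth wherever $E_{k-1}>0$, and $F(\la)=G(\hat\la)-\al$ with $\hat\la=\la+\al(1,\dots,1)$. Since the shift is merely a translation, $F^{ii}(\la)=G^{ii}(\hat\la)$, where $G^{ii}=\partial G/\partial\mu_i$. Hence it suffices to establish
\[
\sum_i G^{ii}\mu_i=G,\qquad \sum_i G^{ii}\geq 1,\qquad \sum_i G^{ii}\mu_i^2\geq G^2
\]
on $\{E_{k-1}>0\}$ and then substitute $\mu_i=\la_i+\al$.

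First I would record these three facts for $G$. The first is Euler's relation for a $1$-homogeneous function. For the second, from $G^{ii}=(E_k^{ii}E_{k-1}-E_kE_{k-1}^{ii})/E_{k-1}^2$ and $\sum_i E_m^{ii}=mE_{m-1}$ (\autoref{lem-Em-property}) one obtains
\[
\sum_i G^{ii}=k-(k-1)\frac{E_kE_{k-2}}{E_{k-1}^2},
\]
which is $\geq 1$ precisely because of the Newton inequality $E_{k-1}^2\geq E_kE_{k-2}$. For the third, using instead $\sum_i E_m^{ii}\mu_i^2=nE_1E_m-(n-m)E_{m+1}$, the terms containing $E_1$ cancel and a short computation gives
\[
\sum_i G^{ii}\mu_i^2=G^2+(n-k)\frac{E_k^2-E_{k-1}E_{k+1}}{E_{k-1}^2},
\]
which is $\geq G^2$ by the Newton inequality $E_k^2\geq E_{k-1}E_{k+1}$ together with $k\leq n-1$.

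It remains to transfer these to $F$. Evaluating at $\mu=\hat\la$ and using $G(\hat\la)=F+\al$, the expansions of $\hat\la_i-\al$ and $(\hat\la_i-\al)^2$ yield
\[
\sum_i F^{ii}\la_i=F+\al\Big(1-\sum_i F^{ii}\Big),\qquad
\sum_i F^{ii}\la_i^2=F^2+\al^2\Big(\sum_i F^{ii}-1\Big).
\]
Since $\al>0$ and $\sum_i F^{ii}\geq 1$ (the middle assertion, already proved), the first right-hand side is $\leq F$ and the second is $\geq F^2$, which finishes the argument.

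I do not anticipate a genuine obstacle. The only points to watch are: $E_{k-1}(\hat\la)>0$ on $\Ga_{\al,k}$, which makes $G$ and the divisions legitimate and holds because $\Ga_{\al,k}\subset\Ga_{k-1}$; the hypothesis $k\leq n-1$, needed so that $n-k\geq 1$ in the quadratic identity; and the fact that the Newton inequalities in \autoref{lemma-Newton-ineq}(1) are valid on all of $\bbR^n$, so no extra convexity is required beyond $E_{k-1}>0$. The sole piece of real computation is the routine bookkeeping that collapses the $E_1$-terms in $\sum_i G^{ii}\mu_i^2$.
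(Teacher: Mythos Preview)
Your approach is essentially the paper's: both pass to $G=E_k/E_{k-1}$ via the shift $\hat\la=\la+\al(1,\dots,1)$, invoke Euler's relation for $\sum_i G^{ii}\hat\la_i$, and use the Newton inequalities $E_{k-1}^2\ge E_kE_{k-2}$ and $E_k^2\ge E_{k-1}E_{k+1}$ for the trace and the quadratic sum respectively. One slip to correct: the second displayed formula
\[
\sum_i F^{ii}\la_i^2=F^2+\al^2\Big(\sum_i F^{ii}-1\Big)
\]
is not an identity. Expanding $(\hat\la_i-\al)^2$ gives
\[
\sum_i F^{ii}\la_i^2=\sum_i G^{ii}\hat\la_i^2-2\al G+\al^2\sum_i G^{ii},
\]
and by your own computation $\sum_i G^{ii}\hat\la_i^2=G^2+(n-k)\,(E_k^2-E_{k-1}E_{k+1})/E_{k-1}^2$, which is $\ge G^2$ but generically strictly larger. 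Replacing ``$=$'' by ``$\ge$'' (which is all you need) repairs the line, and the conclusion $\sum_i F^{ii}\la_i^2\ge F^2$ follows exactly as in the paper.
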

\begin{proof}
Recall that $F=\frac{E_k(\hat{\la})}{E_{k-1}(\hat{\la})}-\al$. We calculate
\eq{
\sum_{i}F^{ii}=\sum_i \frac{\partial}{\partial \la_i}\(\frac{E_k(\hat{\la})}{E_{k-1}(\hat{\la})}\)=\sum_i \frac{\partial}{\partial \hat{\la}_i}\(\frac{E_k(\hat{\la})}{E_{k-1}(\hat{\la})}\)\geq 1,
}
and
\eq{
\sum_{i}F^{ii}\la_i
                   &=\sum_{i}F^{ii}\hat{\la}_i-\al \sum_{i}F^{ii}\\
                   &\leq \sum_{i}\hat{\la}_i\frac{\partial}{\partial \hat{\la}_i}\(\frac{E_k(\hat{\la})}{E_{k-1}(\hat{\la})}\)-\al=F,
}
where we used the one-homogeneity of $E_k/E_{k-1}$.

Due to \autoref{lem-Em-property}, we have
\eq{
\sum_i F^{ii}\hat{\la}^2_i &=\sum_i\frac{\partial}{\partial \hat{\la}_i}\(\frac{E_k(\hat{\la})}{E_{k-1}(\hat{\la})}\)\hat{\la}^2_i\\
&=\sum_i \(\frac{E_k^{ii}(\hat{\la})}{E_{k-1}(\hat{\la})}-\frac{E_k(\hat{\la})E_{k-1}^{ii}(\hat{\la})}{E_{k-1}^2(\hat{\la})}\)\ \hat\la_i^2\\
&=(n+1-k)\frac{E_k^2(\hat{\la})}{E_{k-1}^2(\hat{\la})}-(n-k)\frac{E_{k+1}(\hat{\la})}{E_{k-1}(\hat{\la})}\\
&\geq \frac{E_{k}^2(\hat{\la})}{E_{k-1}^2(\hat{\la})}=(F+\al)^2,
}
where the last inequality follows from the Newton inequality 
\eq{
E_{k-1}(\hat{\la})E_{k+1}(\hat{\la})\leq E_{k}(\hat{\la})^2\q \text{and}\q E_{k-1}(\hat{\la})>0.
}
Hence,
\eq{
\sum_{i}F^{ii}\la_i^2 &=\sum_{i}F^{ii}(\hat{\la}_i^2-2\al \hat{\la}_i+\al^2) \\
                      &=\sum_{i}F^{ii} \hat{\la}^2_i-2\al \sum_{i}F^{ii}\hat{\la}_i+\al^2 \sum_{i}F^{ii}\\
                      &\geq (F+\al)^2-2\al (F+\al)+\al^2=F^2.
}

\end{proof}

\section{Regularity estimates}
We start by deriving a few standard evolution equations for the flow 
\eq{\label{general-flow}
\dot{x}=\(\frac{1}{F}- u\)\nu,\quad
F=\frac{E_k(\hat\ka)}{E_{k-1}(\hat\ka)}-\al,\q k=2,\ldots,n-1.
}

\begin{lemma}
Along the flow
$
\dot{x}=\cF \nu,
$
we have 
\begin{align}
\partial_t \nu &=-\nabla \cF, \label{evol-unit-normal-general}\\
\partial_t h_i^j &=-\nabla_i\nabla^j \cF -\cF (h^2)_i^j. \label{evol-Weingarten-matrix-general}
\end{align}
\end{lemma}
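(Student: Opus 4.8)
The plan is to derive both formulas by the standard first-variation computation. I fix a time-independent local coordinate system $(u^1,\dots,u^n)$ on $\cM$, so that $x=x(u,t)$, $\nu=\nu(u,t)$ and $\cF=\cF(u,t)$ are functions of $(u,t)$ whose $t$- and $u$-derivatives commute, and I differentiate the Gauss--Weingarten relations in $t$, using $\partial_t x=\cF\nu$ throughout. With the paper's convention $h_{ij}=\langle\partial_i\nu,\partial_j x\rangle$ the Gauss formula reads $\partial_i\partial_j x=\Gamma_{ij}^k\partial_k x-h_{ij}\nu$ and the Weingarten relation reads $\partial_i\nu=h_i^k\partial_k x$; these are the two identities I will feed into the computation.

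For \eqref{evol-unit-normal-general}: differentiating $\langle\nu,\nu\rangle=1$ shows $\partial_t\nu$ is tangential, and differentiating $\langle\nu,\partial_i x\rangle=0$ in $t$ gives $\langle\partial_t\nu,\partial_i x\rangle=-\langle\nu,\partial_i(\cF\nu)\rangle=-\partial_i\cF$ since $\langle\nu,\partial_i\nu\rangle=0$; raising the index with $g$ yields $\partial_t\nu=-\nabla\cF$. For \eqref{evol-Weingarten-matrix-general} I would first record the evolution of the metric: from $g_{ij}=\langle\partial_i x,\partial_j x\rangle$ one gets $\partial_t g_{ij}=2\cF h_{ij}$, hence $\partial_t g^{ij}=-2\cF h^{ij}$. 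Next, starting from $h_{ij}=-\langle\nu,\partial_i\partial_j x\rangle$ and differentiating in $t$: the term carrying $\partial_t\nu$ is tangential, so it only pairs with the Christoffel part of $\partial_i\partial_j x$ and contributes $\Gamma_{ij}^k\partial_k\cF$, while expanding $\partial_i\partial_j(\cF\nu)$ and using $\langle\nu,\partial_i\nu\rangle=0$ together with $\langle\nu,\partial_i\partial_j\nu\rangle=-(h^2)_{ij}$ (obtained by differentiating $\langle\nu,\partial_j\nu\rangle=0$ and invoking the Weingarten relation) contributes $-\partial_i\partial_j\cF+\cF(h^2)_{ij}$. The first-derivative terms assemble into the covariant Hessian, so
\eq{
\partial_t h_{ij}=-\nabla_i\nabla_j\cF+\cF(h^2)_{ij}.
}
Finally, writing $h_i^j=g^{jk}h_{ik}$ and using $\partial_t g^{jk}=-2\cF h^{jk}$,
\eq{
\partial_t h_i^j&=(\partial_t g^{jk})h_{ik}+g^{jk}\partial_t h_{ik}\\
&=-2\cF(h^2)_i^j-\nabla_i\nabla^j\cF+\cF(h^2)_i^j=-\nabla_i\nabla^j\cF-\cF(h^2)_i^j,
}
which is \eqref{evol-Weingarten-matrix-general}.

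There is no genuine obstacle here: this is the classical computation of the variation of the induced metric and second fundamental form under a normal deformation, and it can equally well be quoted from standard references on curvature flows (or from \cite{GL15}). The only points requiring care are consistency with the sign convention for $h_{ij}$ (under which a round sphere with outward normal has positive principal curvatures) and correctly absorbing the Christoffel terms coming from $\partial_i\partial_j x$ and from the second ordinary derivatives of $\cF$ into the covariant Hessian $\nabla_i\nabla_j\cF$ in the middle step; once that bookkeeping is done, \eqref{evol-Weingarten-matrix-general} drops out after raising one index.
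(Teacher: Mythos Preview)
Your argument is correct: the derivation of $\partial_t\nu=-\nabla\cF$, of $\partial_t g_{ij}=2\cF h_{ij}$, of $\partial_t h_{ij}=-\nabla_i\nabla_j\cF+\cF(h^2)_{ij}$, and the index-raising step all check out under the paper's sign convention $h_{ij}=\langle D_{e_i}\nu,e_j\rangle$. The paper itself gives no proof of this lemma---it is simply stated as a standard variation formula and used in the subsequent computations---so there is nothing to compare; your write-up supplies exactly the routine calculation that the authors take for granted.
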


\begin{lemma}
We have the following evolution equations along the flow \eqref{general-flow}.
        \eq{\label{evol-hij}
\partial_t h_i^j &= \frac{F^{kl}}{F^2}\nabla_k\nabla_l h_i^j+\(\frac{F^{kl}}{F^2}(h^2)_{kl}+1\)h_i^j+\frac{1}{F^2}F^{kl,pq}\nabla_i h_{pq}\nabla^j h_{kl}\\
                &\quad -\frac{2}{F^3}\nabla_i F\nabla^j F+\langle \nabla h_i^j,x\rangle -\(\frac{F^{kl}}{F^2}h_{kl}+\frac{1}{F}\)(h^2)_i^j. 
}    
        \eq{\label{evol-F}
\partial_t F &=\frac{F^{ij}}{F^2}\nabla_i\nabla_j F-\frac{2F^{ij}}{F^3}\nabla_iF \nabla_j F+\langle \nabla F,x\rangle\\
             &\quad + F^{ij}h_{ij}-\frac{1}{F}F^{ij}(h^2)_{ij}.
}
        \eq{\label{evol-u}
\partial_t u &=\frac{F^{kl}}{F^2}\nabla_k\nabla_l u+\langle x,\nabla u\rangle +\frac{1}{F}\(1-\frac{F^{kl}h_{kl}}{F}\)\\
&\quad +u\(\frac{F^{kl}(h^2)_{kl}}{F^2}-1\).
}
\end{lemma}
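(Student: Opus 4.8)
The plan is to derive everything from the two basic flow formulas in the preceding lemma, namely $\partial_t\nu=-\nabla\cF$ and $\partial_t h_i^j=-\nabla_i\nabla^j\cF-\cF(h^2)_i^j$, applied to the specific speed $\cF=\frac1F-u$. First I would record the first-order data: since $\cF=F^{-1}-u$, we have $\nabla_i\cF=-F^{-2}\nabla_iF-\nabla_iu$, and using the Weingarten identity $\nabla_iu=h_i^k\nabla_k\Phi=\langle x,e_k\rangle h_i^k$ from \eqref{identity-2}. For the second derivative, $\nabla_i\nabla^j\cF=-F^{-2}\nabla_i\nabla^jF+2F^{-3}\nabla_iF\nabla^jF-\nabla_i\nabla^ju$, and I substitute the Hessian of $u$ from \eqref{identity-2}, $\nabla_i\nabla_ju=\langle x,\nabla h_{ij}\rangle+h_{ij}-u(h^2)_{ij}$.

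Next I would compute $\nabla_i\nabla^jF$ in terms of the intrinsic Laplacian-type operator $F^{kl}\nabla_k\nabla_l h_i^j$. This is the standard commutation step: since $F=F(h)$, the chain rule gives $\nabla_iF=F^{kl}\nabla_ih_{kl}$ and $\nabla_i\nabla_jF=F^{kl}\nabla_i\nabla_jh_{kl}+F^{kl,pq}\nabla_ih_{kl}\nabla_jh_{pq}$; then I commute covariant derivatives on the fully symmetric $\nabla^2 h$ using the Codazzi equations and the Gauss equation for the flat ambient space, which produces the curvature (i.e. $h*h$) correction terms $F^{kl}(h^2)_{kl}h_i^j$, $F^{kl}h_{kl}(h^2)_i^j$ and so on. Assembling $-\nabla_i\nabla^j\cF-\cF(h^2)_i^j$ with all these substitutions, and writing $\langle\nabla h_i^j,x\rangle$ for the term coming from $\langle x,\nabla h_{ij}\rangle$, yields \eqref{evol-hij}. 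Equation \eqref{evol-F} then follows by tracing: $\partial_tF=F_i^j\,\partial_th_i^j$ (using $F^{ij,kl}\nabla_ph_{ij}\nabla^ph_{kl}$ contracts against $F_i^j$ to zero, or rather combines with the chain rule to give $F^{ij}\nabla_i\nabla_jF$), and \eqref{evol-u} follows from $\partial_tu=\partial_t\langle x,\nu\rangle=\langle\dot x,\nu\rangle+\langle x,\partial_t\nu\rangle=\cF+\langle x,-\nabla\cF\rangle$ together with the Hessian identity for $u$ to convert $\langle x,\nabla\cF\rangle$ into the operator $F^{-2}F^{kl}\nabla_k\nabla_lu$ plus lower-order terms.

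The main obstacle, as usual for such computations, is bookkeeping: correctly tracking the $h*h$ terms that arise from commuting covariant derivatives of the second fundamental form (Simons-type identities) and making sure the $F^{-2}$, $F^{-3}$ weights are distributed correctly between the $\nabla^2F$ term, the gradient term $F^{-3}\nabla_iF\nabla^jF$, and the zeroth-order curvature terms. There is no conceptual difficulty — one only needs the flat-ambient Gauss–Codazzi relations and the homogeneity-free chain rule for $F(h)$ — but the cancellation of the $\langle x,\nabla h\rangle$-type contributions against each other (the transport term $\langle\nabla h_i^j,x\rangle$ must come out with coefficient exactly one) requires care.
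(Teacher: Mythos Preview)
Your plan is correct and matches the paper's proof essentially step for step: expand $-\nabla_i\nabla^j(\tfrac{1}{F}-u)-(\tfrac{1}{F}-u)(h^2)_i^j$ using the Hessian of $u$ from \eqref{identity-2} and the Simons-type commutation for $\nabla_i\nabla_jF$, then obtain \eqref{evol-F} by contracting with $F_j^i$ and \eqref{evol-u} from $\partial_tu=\cF-\langle x,\nabla\cF\rangle$ together with \eqref{identity-2}. The only minor deviation is that the paper derives \eqref{evol-F} directly from the unexpanded formula $F_j^i\partial_th_i^j=-F^{ij}\nabla_i\nabla_j\cF-\cF F^{ij}(h^2)_{ij}$ rather than tracing the already-expanded \eqref{evol-hij}, which sidesteps the recombination of the $F^{kl,pq}$ terms you allude to; also note that the transport term $\langle\nabla h_i^j,x\rangle$ arises only once (from $\nabla_i\nabla^ju$), so no cancellation is actually needed there.
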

\begin{proof}
(i) Using \eqref{evol-Weingarten-matrix-general}, we have
\eq{
\partial_t h_i^j&=-\nabla_i \nabla^j\(\frac{1}{F}-u\)-\(\frac{1}{F}-u\) (h^2)_i^j \\
                &=\frac{1}{F^2}\nabla_i\nabla^j F-\frac{2}{F^3}\nabla_i F\nabla^j F+\nabla_i\nabla^j u-\frac{1}{F}(h^2)_i^j+u(h^2)_i^j\\
                &=\frac{F^{kl}}{F^2}\nabla_k\nabla_l h_i^j+\(\frac{F^{kl}}{F^2}(h^2)_{kl}+1\)h_i^j+\frac{1}{F^2}F^{kl,pq}\nabla_i h_{pq}\nabla^j h_{kl}\\
                &\quad -\frac{2}{F^3}\nabla_i F\nabla^j F+\langle \nabla h_i^j,x\rangle -\(\frac{F^{kl}}{F^2}h_{kl}+\frac{1}{F}\)(h^2)_i^j,
}
where we used  
\eq{
\nabla_i\nabla_j F&=F^{kl}\nabla_i\nabla_j h_{kl}+F^{kl,pq}\nabla_i h_{pq}\nabla_j h_{kl}\\
                  &=F^{kl}\nabla_k\nabla_l h_{ij}+F^{kl}(h^2)_{kl} h_{ij}-F^{kl}h_{kl} (h^2)_{ij}+F^{kl,pq}\nabla_i h_{pq} \nabla_j h_{kl}
}
and
$
\nabla_i\nabla^ju=\langle \nabla h_i^j, x\rangle+h_i^j-u(h^2)_i^j
$ due to \eqref{identity-2}.

(ii) To deduce \eqref{evol-F}, by using \eqref{evol-Weingarten-matrix-general} and \eqref{identity-2} we compute
\eq{
    \partial_t F&=F^i_j\partial_t h_i^j \\
            &= -F^i_j\nabla_i\nabla^j \(\frac{1}{F}-u\)-\(\frac{1}{F}-u\)F^{ij}(h^2)_{ij}\\
            &=F^i_j\( \frac{1}{F^2}\nabla_i \nabla^j F-\frac{2}{F^3}\nabla_i F\nabla^j F\)+F^i_j(\langle \nabla h_i^j, x\rangle+h_i^j-u(h^2)_i^j)\\
            &\quad -\(\frac{1}{F}-u\)F^{ij}(h^2)_{ij}\\
            &=\frac{F^{ij}}{F^2}\nabla_i\nabla_j F-\frac{2F^{ij}}{F^3}\nabla_iF \nabla_j F+\langle \nabla F,x\rangle+ F^{ij}h_{ij}-\frac{F^{ij}}{F}(h^2)_{ij}.
}

(iii)  Owing to $\partial_t\nu=-\nabla(\frac{1}{F}-u)$ (see \eqref{evol-unit-normal-general}), we have
\eq{ \label{evol-u-deduction-1}
\partial_t u&=\frac{1}{F}-u-\langle x,\nabla(\frac{1}{F}-u)\rangle\\
&=\frac{1}{F}-u+\frac{1}{F^2}\langle x,\nabla F\rangle +\langle x,\nabla u\rangle.
}
On the other hand, by \eqref{identity-2}, we also have 
\eq{ \label{evol-u-deduction-2}
\frac{F^{ij}}{F^2}\nabla_i\nabla_j u&=\frac{F^{ij}}{F^2}(\langle \nabla h_{ij},x\rangle+h_{ij}-u(h^2)_{ij})\\
&=\frac{1}{F^2}(\langle \nabla F,x\rangle+F^{ij}h_{ij}-uF^{ij}(h^2)_{ij}).
}
Now \eqref{evol-u} follows immediately from \eqref{evol-u-deduction-1} and \eqref{evol-u-deduction-2}.
\end{proof}

\begin{lemma}\label{curvature est MP}Along the flow \eqref{general-flow}, we have
\eq{
\partial_t \frac{h_i^j}{u}&= \frac{F^{kl}}{F^2}\nabla_k\nabla_l \frac{h_i^j}{u}+\langle \nabla \frac{h_i^j}{u},x\rangle +\frac{2}{uF^2}F^{kl}\nabla_k\frac{h_i^j}{u}\nabla_l u\\
                &\quad +\frac{1}{uF^2}F^{kl,pq}\nabla_i h_{pq}\nabla^j h_{kl}-\frac{2}{uF^3}\nabla_i F\nabla^j F\\
                &\quad -\(\frac{F^{kl}h_{kl}}{F}+1\)\frac{(h^2)_i^j}{uF}-\(\frac{1}{uF}\left(1-\frac{F^{kl}h_{kl}}{F}\right)-2\)\frac{h_i^j}{u}.  
}
\end{lemma}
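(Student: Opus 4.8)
The plan is to derive the identity directly from the two evolution equations \eqref{evol-hij} and \eqref{evol-u} already established, using nothing more than the product rule
\[
\partial_t\frac{h_i^j}{u}=\frac{1}{u}\,\partial_t h_i^j-\frac{h_i^j}{u^2}\,\partial_t u
\]
together with the purely algebraic observation that, writing $w:=h_i^j/u$ so that $h_i^j=uw$, one has
\[
\nabla_k h_i^j=u\nabla_k w+w\nabla_k u,\qquad
\nabla_k\nabla_l h_i^j=u\nabla_k\nabla_l w+\nabla_k u\,\nabla_l w+\nabla_l u\,\nabla_k w+w\nabla_k\nabla_l u.
\]

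First I would substitute these two relations into \eqref{evol-hij} and divide by $u$. The leading term then becomes
\[
\frac{1}{u}\cdot\frac{F^{kl}}{F^2}\nabla_k\nabla_l h_i^j
=\frac{F^{kl}}{F^2}\nabla_k\nabla_l w+\frac{2}{uF^2}F^{kl}\nabla_k w\,\nabla_l u+\frac{w}{uF^2}F^{kl}\nabla_k\nabla_l u,
\]
using the symmetry $F^{kl}=F^{lk}$, while the drift term becomes $\langle\nabla h_i^j,x\rangle/u=\langle\nabla w,x\rangle+\frac wu\langle\nabla u,x\rangle$ and the terms $\frac{1}{F^2}F^{kl,pq}\nabla_i h_{pq}\nabla^j h_{kl}$ and $-\frac{2}{F^3}\nabla_i F\nabla^j F$ merely acquire a factor $1/u$. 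Next I would substitute \eqref{evol-u} into $-\frac{h_i^j}{u^2}\partial_t u=-\frac wu\,\partial_t u$.

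The crucial point --- and the only one that needs care --- is that two pairs of terms cancel between the two halves of the product rule: the contribution $-\frac wu\cdot\frac{F^{kl}}{F^2}\nabla_k\nabla_l u$ coming from \eqref{evol-u} exactly kills the $+\frac{w}{uF^2}F^{kl}\nabla_k\nabla_l u$ produced above, so that no second-order term survives beyond $\frac{F^{kl}}{F^2}\nabla_k\nabla_l w$ and one never needs to expand $\nabla_k\nabla_l u$ through \eqref{identity-2}; and the contribution $-\frac wu\langle x,\nabla u\rangle$ from \eqref{evol-u} cancels the leftover $\frac wu\langle\nabla u,x\rangle$, leaving the clean drift term $\langle\nabla w,x\rangle$. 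It then remains to collect the zeroth-order and Weingarten-quadratic terms: from $\frac1u\partial_t h_i^j$ one has $\big(\frac{F^{kl}}{F^2}(h^2)_{kl}+1\big)w-\big(\frac{F^{kl}}{F^2}h_{kl}+\frac1F\big)\frac{(h^2)_i^j}{u}$, and from $-\frac wu\partial_t u$ one has $-w\big(\frac{F^{kl}(h^2)_{kl}}{F^2}-1\big)-\frac{w}{uF}\big(1-\frac{F^{kl}h_{kl}}{F}\big)$. The $\frac{F^{kl}(h^2)_{kl}}{F^2}w$ terms cancel, the constant-coefficient $w$-terms add to $2w$, rewriting $\frac{F^{kl}}{F^2}h_{kl}+\frac1F=\frac1F\big(\frac{F^{kl}h_{kl}}{F}+1\big)$ puts the $(h^2)_i^j$ term in the asserted shape, and regrouping $2w-\frac{w}{uF}\big(1-\frac{F^{kl}h_{kl}}{F}\big)=-\big(\frac{1}{uF}(1-\frac{F^{kl}h_{kl}}{F})-2\big)w$ yields exactly the stated evolution equation. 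I do not anticipate a genuine obstacle; the proof is a bookkeeping exercise whose only subtlety is the two cancellations just described, which are precisely what make $\frac{F^{kl}}{F^2}\nabla_k\nabla_l(\cdot)+\langle\nabla(\cdot),x\rangle$ the natural diffusion--drift operator for the quotient $h_i^j/u$.
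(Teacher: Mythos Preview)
Your proposal is correct and follows essentially the same route as the paper's proof: both start from the product rule $\partial_t(h_i^j/u)=\frac1u\partial_t h_i^j-\frac{h_i^j}{u^2}\partial_t u$, substitute \eqref{evol-hij} and \eqref{evol-u}, and then observe that the second-order and drift terms recombine via the quotient rule for $\nabla$ and $\nabla^2$ of $h_i^j/u$. Your write-up is simply more explicit about the two cancellations (the $\nabla_k\nabla_l u$ and $\langle x,\nabla u\rangle$ terms) than the paper, which performs them in a single step.
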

\begin{proof}
    By \eqref{evol-hij} and \eqref{evol-u}, we have
    \eq{
    \partial_t\frac{h_i^j}{u}&=\frac{1}{u}\partial_t h_i^j-\frac{h_i^j}{u^2}\partial_t u\\
                              &=\frac{F^{kl}}{uF^2}\nabla_k\nabla_l h_i^j+\(\frac{F^{kl}(h^2)_{kl}}{F^2}+1\)\frac{h_i^j}{u}+\frac{1}{uF^2}F^{kl,pq}\nabla_i h_{pq}\nabla^j h_{kl}\\
                &\quad -\frac{2}{uF^3}\nabla_i F\nabla^j F+\langle \nabla \frac{h_i^j}{u},x\rangle -\(\frac{F^{kl}h_{kl}}{F}+1\)\frac{(h^2)_i^j}{uF}\\
                &\quad -\frac{h_i^j}{u^2F^2}F^{kl}\nabla_k\nabla_l u-\frac{h_i^j}{u^2F}\(1-\frac{F^{kl}h_{kl}}{F}\)-\frac{h_i^j}{u}\(\frac{F^{kl}(h^2)_{kl}}{F^2}-1\) \\
                &=\frac{F^{kl}}{F^2}\nabla_k\nabla_l \frac{h_i^j}{u}+\langle \nabla \frac{h_i^j}{u},x\rangle +\frac{2}{F^2}F^{kl}\nabla_k\frac{h_i^j}{u}\nabla_l \log u\\
                &\quad +\frac{1}{uF^2}F^{kl,pq}\nabla_i h_{pq}\nabla^j h_{kl}-\frac{2}{uF^3}\nabla_i F\nabla^j F\\
                &\quad -\(\frac{F^{kl}h_{kl}}{F}+1\)\frac{(h^2)_i^j}{uF}-\(\frac{1}{uF}\left(1-\frac{F^{kl}h_{kl}}{F}\right)-2\)\frac{h_i^j}{u}.
    }
\end{proof}

Let $(\cM,g)$ be a hypersurface in $\bbR^{n+1}$ with the induced metric $g$. We say $\cM$ is star-shaped if the support function is positive everywhere on $\cM$. In this case, $\cM$ can be expressed as a radial graph in spherical coordinates in $\bbR^{n+1}$:
\eq{
\cM=\{ (r(z),z)\in \bbR^{+}\times \bbS^n ~|~z \in \bbS^n \},
}
where $r=r(z)$ is a smooth and positive function over $\bbS^n$, and $z=(z^1,\ldots,z^n)$ are local coordinates of $(\bbS^n,\de,\ov\nabla)$. 

We write $\partial_i=\partial_{z^i}$ and $r_i=\ov\nabla_i r$. The tangent space of $\cM$ is spanned by 
\eq{
\{e_i:=\partial_i+r_i \partial_r,~i=1,\ldots,n\},
}
 and we have
\eq{
u&=\frac{r}{v}, \quad v=\sqrt{1+r^{-2}|\ov\nabla r|^2},\\
g_{ij}&=r^2 \de_{ij}+\bar{\nabla}_ir\bar{\nabla}_jr,\quad g^{ij}=r^{-2}\(\de^{ij}-\frac{\bar{\nabla}^ir\bar{\nabla}^jr}{r^2+|\ov\nabla r|^2}\),\\
h_{ij}&=\frac{1}{rv}(-r\bar{\nabla}_i\bar{\nabla}_jr+2\bar{\nabla}_ir \bar{\nabla}_jr+r^2\de_{ij}),\\
h_i^j&=\frac{1}{r^{3} v}\(\de^{jk}-\frac{\bar{\nabla}^j r \bar{\nabla}^kr}{r^2 v^2}\)\(-r \bar{\nabla}_k\bar{\nabla}_i r+2\bar{\nabla}_ir\bar{\nabla}_kr+r^2\de_{ki}\).
}
Let us introduce a new variable $\ga=\log r$. Then 
\eq{
u&=\frac{e^\ga}{v}, \quad v=\sqrt{1+|\ov\nabla \ga|^2},\\
g_{ij}&=e^{2\ga}(\de_{ij}+\bar{\nabla}_i \ga \bar{\nabla}_j \ga),\quad g^{ij}=e^{-2\ga}\(\de^{ij}-\frac{\bar{\nabla}^i\ga \bar{\nabla}^j\ga}{v^2}\),\\
h_{ij}&=\frac{e^\ga}{v}(-\bar{\nabla}_i\bar{\nabla}_j\ga+\bar{\nabla}_i\ga \bar{\nabla}_j \ga+\de_{ij}),\\
h_i^j&=\frac{1}{e^\ga v}\(\de^{jk}-\frac{\bar{\nabla}^j\ga\bar{\nabla}^k\ga}{v^2}\)\(-\bar{\nabla}_k\bar{\nabla}_i\ga+\bar{\nabla}_k\ga \bar{\nabla}_i\ga+\de_{ki}\).
}

Suppose $\cM_t$, $t\in [0,T)$, is a family of star-shaped, $(\alpha,k)$-convex hypersurfaces solving \eqref{general-flow}. The corresponding $\ga=\ga(x(z,t),t)$ satisfies the following initial value problem:
\eq{ \label{general-flow-radial}
\left\{
\begin{aligned}\partial_t\ga&=\frac{v}{e^\ga F}-1, \quad (z,t)\in \bbS^n \times [0,T),\\
                \ga(\cdot,0)&=\ga_0(\cdot),
\end{aligned}\right.
}
where $\ga_0=\log r_0$ and $r_0$ is the radial function of $\cM_0$.

\begin{lemma}\label{prop-C0-estimate}
   We have
    \eq{ \label{C0-estimate}
    \min_{\bbS^n} \ga(x(\cdot,0),0) \leq \ga(x(z,t),t) \leq \max_{\bbS^n} \ga(x(\cdot,0),0). 
    }
\end{lemma}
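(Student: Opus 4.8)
The plan is to run the scalar parabolic maximum principle on the radial equation \eqref{general-flow-radial}. Set $\ga_{\max}(t)=\max_{z\in\bbS^n}\ga(x(z,t),t)$ and $\ga_{\min}(t)=\min_{z\in\bbS^n}\ga(x(z,t),t)$. By Hamilton's trick these are locally Lipschitz in $t$, and for a.e.\ $t$ one has $\tfrac{d}{dt}\ga_{\max}(t)=\partial_t\ga(z_t,t)$ for some $z_t$ attaining the spatial maximum, and similarly for the minimum. Hence it suffices to prove $\partial_t\ga\le 0$ at a spatial maximum of $\ga(\cdot,t)$ and $\partial_t\ga\ge 0$ at a spatial minimum.

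First I would extract curvature information at an extremal point. At a spatial maximum $z_t$ we have $\ov\nabla\ga=0$, hence $v=1$, and $\ov\nabla^2\ga\le 0$ as a bilinear form on $\bbS^n$. Substituting into the radial-graph expression for the Weingarten map,
\[
h_i^j=\frac{1}{e^\ga v}\(\de^{jk}-\frac{\ov\nabla^j\ga\,\ov\nabla^k\ga}{v^2}\)\(-\ov\nabla_k\ov\nabla_i\ga+\ov\nabla_k\ga\,\ov\nabla_i\ga+\de_{ki}\),
\]
we obtain $h_i^j=e^{-\ga}\bigl(\de_i^j-\ov\nabla^j\ov\nabla_i\ga\bigr)\ge e^{-\ga}\de_i^j$, so every principal curvature satisfies $\ka_\ell\ge e^{-\ga}$ at $z_t$. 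The same computation at a spatial minimum (where $\ov\nabla^2\ga\ge 0$) gives $\ka_\ell\le e^{-\ga}$ there.

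Next I would transfer these bounds to $F$ by monotonicity. Since $\cM_t$ is $(\al,k)$-convex, $\ka\in\Ga_{\al,k}$; moreover, because $e^{-\ga}+\al>0$, the constant vector $e^{-\ga}(1,\ldots,1)$ also lies in $\Ga_{\al,k}$, and $F\bigl(e^{-\ga}(1,\ldots,1)\bigr)=e^{-\ga}$ since $E_k/E_{k-1}$ evaluated on the shifted constant vector $(e^{-\ga}+\al)(1,\ldots,1)$ equals $e^{-\ga}+\al$. By convexity of $\Ga_{\al,k}$ the segment joining $e^{-\ga}(1,\ldots,1)$ to $\ka$ stays in $\Ga_{\al,k}$, where $F^{ii}>0$; integrating $\tfrac{d}{ds}F$ along this segment and using $\ka_\ell-e^{-\ga}\ge 0$ for all $\ell$ yields $F(\ka)\ge e^{-\ga}$ at a spatial maximum, and symmetrically $F(\ka)\le e^{-\ga}$ at a spatial minimum. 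Consequently, at a spatial maximum,
\[
\partial_t\ga=\frac{v}{e^\ga F}-1=\frac{1}{e^\ga F}-1\le\frac{1}{e^\ga e^{-\ga}}-1=0,
\]
so $\ga_{\max}$ is non-increasing, while at a spatial minimum the reversed inequalities give $\partial_t\ga\ge 0$, so $\ga_{\min}$ is non-decreasing. Combining the two monotonicity statements yields $\ga_{\min}(0)\le\ga(x(z,t),t)\le\ga_{\max}(0)$, which is \eqref{C0-estimate}.

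The only point requiring care is the comparison between $F(\ka)$ and $e^{-\ga}$ at the extremal points; it rests on three facts already established: $\Ga_{\al,k}$ is convex, $F^{ii}>0$ on $\Ga_{\al,k}$, and $F$ reduces to $1/\rho$ on the round data of radius $\rho$. Everything else is the standard maximum principle for a scalar parabolic equation on a closed manifold, combined with the explicit graph formulas for $u$, $v$ and $h_i^j$.
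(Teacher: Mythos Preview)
Your proof is correct and follows essentially the same route as the paper's: evaluate the radial equation at a spatial extremum, read off $h_i^j\gtrless e^{-\ga}\de_i^j$ from the graph formula, use monotonicity of $F$ to compare $F(\ka)$ with $e^{-\ga}$, and apply the scalar maximum principle. One minor notational slip: by definition it is the \emph{shifted} vector $\hat\ka=\ka+\al(1,\ldots,1)$ that lies in $\Ga_{\al,k}$, not $\ka$ itself (and likewise for the constant vector), but your segment argument goes through unchanged once this is corrected, and the paper handles the same step by simply invoking that $F$ is strictly increasing in $\ka$.
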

\begin{proof}
Assume that $\ga$ attains its maximum at some point $z_0\in \bbS^n$. Then, at $z_0$, we have $\ov\nabla \ga=0$, $v=1$ and $\ov\nabla^2\ga \leq 0$. Therefore,
\eq{
h_i^j  \geq e^{-\ga} \de_i^j.
}
Since $F$ is strictly increasing in $\ka$, we have 
\eq{
F(\ka) \geq F(e^{-\ga} I)=\frac{E_k((e^{-\ga}+\al)I)}{E_{k-1}((e^{-\ga}+\al)I)}-\al=e^{-\ga}.
}
Due to \eqref{general-flow-radial}, the upper bound of $\ga(x(z,t),t)$ follows from
\eq{
\frac{d}{dt}\max_{\bbS^n}\ga(x(\cdot,t),t) \leq 0,
}
at differentiable point of $\max_{\bbS^n}\ga(x(\cdot,t),t) $.
The lower bound of $\ga(x(z,t),t)$ can be deduced similarly. 
\end{proof}

\begin{lemma}\label{prop-C1-estimate}
We have
$
u \geq \min_{\cM_0} u.
$
Moreover, there exists a positive constant $C_1$ depending only on $\cM_0$ such that 
\eq{ \label{C1-estimate}
|\ov\nabla \ga|(z,t)\leq C_1.
}
\end{lemma}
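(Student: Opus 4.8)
The plan is to prove the two assertions separately, with the gradient bound \eqref{C1-estimate} following almost at once from the lower bound on the support function together with the $C^0$ estimate of \autoref{prop-C0-estimate}.

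First I would establish $u\geq \min_{\cM_0}u$ by applying the parabolic maximum principle to the evolution equation \eqref{evol-u}. Since $\cM_t$ is $(\alpha,k)$-convex we have $F>0$ and $F^{ii}>0$, so $\tfrac{F^{kl}}{F^2}\nabla_k\nabla_l$ acts as a genuine (time-dependent) elliptic operator. At a point and time where $u$ attains a spatial minimum we have $\nabla u=0$ and $\nabla^2 u\geq 0$, so the leading term contributes non-negatively and the transport term $\langle x,\nabla u\rangle$ vanishes. For the zeroth-order terms I invoke \autoref{lem-algebraic-property-F}: from $\sum_i F^{ii}\la_i\leq F$ and $F>0$ we get $\tfrac1F(1-\tfrac{F^{kl}h_{kl}}{F})\geq 0$, and from $\sum_i F^{ii}\la_i^2\geq F^2$ together with $u>0$ (star-shapedness on $[0,T)$) we get $u(\tfrac{F^{kl}(h^2)_{kl}}{F^2}-1)\geq 0$. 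Hence $\partial_t u\geq 0$ at a spatial minimum, and Hamilton's trick applied to the Lipschitz function $t\mapsto \min_{\cM_t}u$ gives $\min_{\cM_t}u\geq \min_{\cM_0}u$ for all $t\in[0,T)$.

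For the gradient estimate, recall that in the radial-graph parametrization $u=e^{\ga}/v$ with $v=\sqrt{1+|\ov\nabla\ga|^2}$. By \autoref{prop-C0-estimate} we have $e^{\ga}\leq \max_{\bbS^n}r_0$, and by the first part $u\geq \min_{\cM_0}u>0$; therefore $v=e^{\ga}/u\leq (\max_{\bbS^n}r_0)/(\min_{\cM_0}u)=:C$, and squaring yields $|\ov\nabla\ga|^2=v^2-1\leq C^2-1$. Thus one may take $C_1=\sqrt{C^2-1}$, a constant depending only on $\cM_0$.

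I do not expect a serious obstacle here: the only delicate points are keeping track of the signs of the lower-order terms in \eqref{evol-u}, which is precisely the role of \autoref{lem-algebraic-property-F}, and the standard technicality that $\min_{\cM_t}u$ is merely Lipschitz in $t$, handled by the usual Hamilton/Danskin argument. An alternative would be to run the maximum principle directly on $v$ (equivalently on $|\ov\nabla\ga|^2$) via \eqref{general-flow-radial}, but passing through $u$ is cleaner since \eqref{evol-u} is already recorded.
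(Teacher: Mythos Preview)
Your proposal is correct and follows essentially the same approach as the paper: apply the maximum principle to \eqref{evol-u} using the sign information from \autoref{lem-algebraic-property-F} to obtain $u\geq \min_{\cM_0}u$, then combine $u=e^{\ga}/v$ with the $C^0$ bound from \autoref{prop-C0-estimate} to deduce the bound on $|\ov\nabla\ga|$.
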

\begin{proof}
    Recall the evolution equation \eqref{evol-u}:
    \eq{
    \partial_t u = \frac{F^{kl}}{F^2}\nabla_k\nabla_l u+\langle x,\nabla u\rangle+\frac{1}{F}\(1-\frac{F^{kl}h_{kl}}{F}\)+u\(\frac{F^{kl}(h^2)_{kl}}{F^2}-1\).
    }
    Due to \autoref{lem-algebraic-property-F}, we have
        \eq{
    1-\frac{F^{kl}h_{kl}}{F}\geq 0, \quad \frac{F^{kl}(h^2)_{kl}}{F^2}-1\geq 0.
    }
    Hence, we obtain
    \eq{
    \frac{d}{dt}\min_{\cM_t}u(\cdot,t) \geq 0.
    }
The maximum principle implies that $u$ is bounded below away from zero and
    \eq{
    \frac{e^\ga}{\sqrt{1+|\ov\nabla \ga|^2}}=u \geq C.
    }
Now the upper bound on $|\ov\nabla \ga|$ follows from \eqref{C0-estimate}.
\end{proof}

\begin{lemma}\label{lemma-estimate-F}
There exists $C_2>0$ depending only on $\cM_0$ such that 
\eq{
C_2 \leq F \leq \max_{\cM_0} F.
}
\end{lemma}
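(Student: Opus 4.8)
The plan is to establish the two bounds separately by the maximum principle, using the evolution equations \eqref{evol-F} and \eqref{evol-u}, the algebraic inequalities in \autoref{lem-algebraic-property-F}, and the estimates already proved in \autoref{prop-C0-estimate} and \autoref{prop-C1-estimate}; the latter give, on $[0,T)$, a two-sided bound $0<\min_{\cM_0}u\le u\le\max_{\cM_0}r=:u_{\max}$ for the support function (the upper bound since $u=\langle x,\nu\rangle\le r$ and $r$ is pinched). For the upper bound on $F$ I would apply the maximum principle directly to $F$ via \eqref{evol-F}: at an interior spatial maximum of $F$ one has $\nabla F=0$ and $\nabla^2 F\le 0$, and since $[F^{ij}]$ is positive definite on $\Ga_{\al,k}$ the second-order term is non-positive, so it suffices to note that the zeroth-order part $F^{ij}h_{ij}-F^{-1}F^{ij}(h^2)_{ij}$ is non-positive. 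This is immediate from \autoref{lem-algebraic-property-F}: $F^{ij}h_{ij}=\sum_iF^{ii}\ka_i\le F$ and $F^{ij}(h^2)_{ij}=\sum_iF^{ii}\ka_i^2\ge F^2$, whence the zeroth-order part is $\le F-F=0$. Thus $\tfrac{d}{dt}\max_{\cM_t}F\le 0$ and $F\le\max_{\cM_0}F$.

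For the lower bound I would study $Q:=(Fu)^{-1}$ (equivalently the normalized speed $u^{-1}(F^{-1}-u)=Q-1$). Combining \eqref{evol-F} and \eqref{evol-u} and using the $1$-homogeneity of $E_k/E_{k-1}$, which via Euler's identity gives $\sum_iF^{ii}\hat\ka_i=F+\al$, hence $F^{ij}h_{ij}=F+\al-\al\sum_iF^{ii}$ and $1-F^{-1}F^{ij}h_{ij}=\al F^{-1}\big(\sum_iF^{ii}-1\big)$, one finds that the zeroth-order terms in the evolution of $Fu$ collapse to $\al\big(\sum_iF^{ii}-1\big)\big(F^{-1}-u\big)$, hence those of $Q$ to $-\al\big(\sum_iF^{ii}-1\big)\big(F^{-1}-u\big)(Fu)^{-2}$; the sign input here is $\sum_iF^{ii}\ge 1$ from \autoref{lem-algebraic-property-F}. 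At an interior spatial maximum $p$ of $Q$ one has $\nabla Q=0$, so $\nabla(Fu)=0$, i.e.\ $\nabla F=-\tfrac{F}{u}\nabla u$ there; substituting this cancels the two gradient-square contributions proportional to $F^{ij}\nabla_iF\nabla_ju$ and $F^{ij}\nabla_iF\nabla_jF$, while the $\nabla^2Q$ and drift terms are $\le 0$ at $p$, so $\partial_tQ(p)\le-\al\big(\sum_iF^{ii}-1\big)\big(F^{-1}-u\big)(Fu)^{-2}$. If $\max_{\cM_t}Q>1$ then $F(p)u(p)<1$, so $F(p)^{-1}>u(p)$ and the right-hand side is $\le 0$; hence $\tfrac{d}{dt}\max_{\cM_t}Q\le 0$ whenever $\max_{\cM_t}Q>1$, and therefore $\max_{\cM_t}Q\le\max\{1,\max_{\cM_0}Q\}$ for all $t\in[0,T)$. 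Since $u\le u_{\max}$ this yields $F=(Qu)^{-1}\ge\big(u_{\max}\max\{1,\max_{\cM_0}Q\}\big)^{-1}=:C_2>0$, depending only on $\cM_0$.

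I expect the lower bound to be the only real difficulty. The evolution of $F$ (or of $F^{-1}$) does not close on its own, since the zeroth-order coefficient $F^{ij}(h^2)_{ij}/F^2$ appearing there is not a priori bounded; the point is to choose the correctly rescaled quantity — here $Fu$ — so that, after using the homogeneity identity, its zeroth-order term factors through the non-negative quantities $\sum_iF^{ii}-1$ and the speed $F^{-1}-u$, which lets the already-available bound $u\le u_{\max}$ be used precisely on the region $\{F^{-1}>u\}$ where $F$ could degenerate. The verification that all gradient terms cancel at the maximum once $\nabla(Fu)=0$ is imposed, together with the bookkeeping of the zeroth-order terms, is routine and I would not carry it out in detail here.
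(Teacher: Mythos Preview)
Your argument is correct. The upper bound is identical to the paper's. For the lower bound, the paper takes a different route: it works with the scalar graph equation $\partial_t\ga=\frac{v}{e^{\ga}F}-1=:G$ (equation \eqref{general-flow-radial}), differentiates it in $t$, and applies the maximum principle to $\partial_t\ga$ after checking that $G^{\ga}\le 0$ via $F^{ii}\ka_i\le F$; this gives $\partial_t\ga\le C$, hence $F\ge C_2$. Since $u=e^{\ga}/v$, one has $\partial_t\ga=(uF)^{-1}-1=Q-1$, so the paper and you are in fact bounding exactly the same quantity. The difference is methodological: the paper uses the standard ``differentiate the scalar equation in time'' trick for star-shaped graphical flows, while you compute the evolution of $Fu$ intrinsically on the hypersurface and observe, via the homogeneity identity $\sum_iF^{ii}\hat\ka_i=F+\al$, that the zeroth-order part of $\cL(Fu)$ factors as $\al\big(\sum_iF^{ii}-1\big)\big(F^{-1}-u\big)$ (your claimed cancellation of gradient terms at the critical point and the factorization of the zeroth-order term are both correct). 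Your approach is slightly sharper in that it yields the explicit bound $\min(Fu)\ge\min\{1,\min_{\cM_0}(Fu)\}$, and it does not require passing to the graphical parametrization; the paper's approach is shorter and avoids computing the product evolution.
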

\begin{proof}
Recall the evolution equation \eqref{evol-F}:
\eq{
\partial_t F &=\frac{F^{ij}}{F^2}\nabla_i\nabla_j F-\frac{2F^{ij}}{F^3}\nabla_iF \nabla_j F+\langle \nabla F,x\rangle+ F^{ij}h_i^j-\frac{1}{F}F^{ij}(h^2)_i^j.
}
By \autoref{lem-algebraic-property-F}, we have
\eq{
F^{ii}\ka_i-\frac{F^{ii}\ka_i^2}{F} \leq 0.
}
Hence, from the maximum principle it follows that $F\leq \max_{\cM_0} F$. 

To obtain the lower bound of $F$, note that by \eqref{general-flow-radial}:
\eq{ \label{eq-partial_t-gamma}
\partial_t \ga = \frac{v}{e^\ga F(\frac{1}{e^\ga v}\ti g^{jk}(-\ga_{;ik}+\ga_{;i}\ga_{;k}+\de_{ki}))}-1=:G(\ov\nabla^2 \ga,\ov\nabla \ga,\ga),
}
where $\ti g^{ij}:=e^{2\ga}g^{ij}=\delta^{ij}-\frac{\gamma^i\gamma^j}{v^2}$. Now, differentiating \eqref{eq-partial_t-gamma} with respect to $t$ gives
\eq{
\partial_t (\partial_t\ga)= G^{ij}(\partial_t \ga)_{;ij}+G^{(\ov\nabla \ga)^p} (\partial_t\ga)_{;p} +G^\ga \partial_t \ga,
}
where $G^\ga:=\frac{\partial G}{\partial \ga}$ and so on.
Due to $F^{ii}\ka_i \leq F$ (cf. \autoref{lem-algebraic-property-F}),
\eq{
G^\ga&=-\frac{v}{e^{\ga}F}+\frac{v}{e^\ga}\frac{F^i_j}{F^2}\bigg(\frac{1}{e^\ga v}\ti g^{jk}(-\ga_{;ik}+\ga_{;i}\ga_{;k}+\de_{ki})\bigg) \\
&=-\frac{v}{e^{\ga}F}+\frac{v}{e^{\ga}F^2}F^{ii}\ka_i \\
&\leq 0.
}
This implies that $\partial_t \ga \leq C$ and 
\eq{
\frac{v}{e^\ga F}-1 \leq C.
}
In view of \autoref{prop-C0-estimate}, we have $F\geq C_2>0$, where the constant $C_2$ depends only on $\cM_0$. 
\end{proof}

\begin{lemma}\label{preserving (a,k)-convexity}
There exists $C_3>0$ depending only on $\cM_0$ such that
\eq{
\frac{E_k(\hat\ka)}{E_{k-1}(\hat\ka)}\geq C_3>\al,\quad E_i(\hat{\ka})\geq C_3^i,\quad \forall i=1,\ldots k-1.
}
\end{lemma}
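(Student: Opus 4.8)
The plan is to derive the first inequality directly from the lower bound on $F$ already established in \autoref{lemma-estimate-F}, and to obtain the bounds on the $E_i(\hat\ka)$ from the Maclaurin-type inequality \eqref{Maclaurin-type-I}. Along the flow one has $F=\frac{E_k(\hat\ka)}{E_{k-1}(\hat\ka)}-\al$, so \autoref{lemma-estimate-F} gives at once
\eq{
\frac{E_k(\hat\ka)}{E_{k-1}(\hat\ka)}=F+\al\geq C_2+\al=:C_3>\al,
}
and $C_3$ depends only on $\cM_0$ because $C_2$ does.

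For the remaining estimates I would first record that $\cM_t$ stays $(\al,k)$-convex (this is part of the standing hypothesis on the flow), so $\hat\ka\in\Ga_{\al,k}$. Since $\Ga_{\al,k}=\Ga_{k-1}\cap\{E_k>\al E_{k-1}\}$, the relation $E_k(\hat\ka)>\al E_{k-1}(\hat\ka)\geq 0$ together with $\hat\ka\in\Ga_{k-1}$ forces $E_k(\hat\ka)>0$, i.e. $\hat\ka\in\Ga_k\subseteq\ov\Ga_k$; in particular $E_0(\hat\ka)=1$ and $E_j(\hat\ka)>0$ for $1\leq j\leq k$, so all the quotients below are well-defined. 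Applying \eqref{Maclaurin-type-I} with $(\ell,k)$ replaced by $(j,k)$ for $1\leq j\leq k-1$ gives $E_{k-1}(\hat\ka)E_j(\hat\ka)\geq E_k(\hat\ka)E_{j-1}(\hat\ka)$, hence
\eq{
\frac{E_j(\hat\ka)}{E_{j-1}(\hat\ka)}\geq\frac{E_k(\hat\ka)}{E_{k-1}(\hat\ka)}\geq C_3,\quad j=1,\ldots,k-1.
}
Taking the product of these inequalities for $j=1,\ldots,i$ and using $E_0(\hat\ka)=1$ yields $E_i(\hat\ka)\geq C_3^i$ for every $i=1,\ldots,k-1$, as claimed.

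Once \autoref{lemma-estimate-F} is available, the argument is purely algebraic and I do not expect any real difficulty; the only points that merit a line of justification are the inclusion $\Ga_{\al,k}\subseteq\Ga_k$ (needed so that each $E_j(\hat\ka)$ in the telescoping product is strictly positive and the quotients make sense) and the bookkeeping that $C_3=C_2+\al$ depends on nothing beyond the initial hypersurface $\cM_0$.
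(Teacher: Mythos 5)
Your proof is correct and follows essentially the same route as the paper: both start from the lower bound $F\geq C_2$ of Lemma~\ref{lemma-estimate-F} to get $E_k(\hat\kappa)/E_{k-1}(\hat\kappa)\geq C_3:=\alpha+C_2$, and both then appeal to the Newton--Maclaurin inequalities of Lemma~\ref{lemma-Newton-ineq}. The only difference is cosmetic: the paper uses \eqref{Maclaurin-type-II} to chain $E_k/E_{k-1}\leq E_k^{1/k}\leq E_i^{1/i}$, whereas you use \eqref{Maclaurin-type-I} to get $E_j/E_{j-1}\geq E_k/E_{k-1}$ and then telescope the product; these are two equivalent packagings of the same Maclaurin-type monotonicity, and both implicitly or explicitly use $\hat\kappa\in\Gamma_k$, which you spell out and the paper leaves tacit.
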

\pf{
By \autoref{lemma-estimate-F},  $F\geq C_2$ on $[0,T)$, where $C_2>0$ depends only on $\cM_0$. Hence, for $i=1,\ldots, k$, we have
\eq{
C_3:=\al+C_2 \leq \frac{E_k(\hat\ka)}{E_{k-1}(\hat\ka)}\leq E_k(\hat\ka)^\frac{1}{k}\leq E_i(\hat\ka)^\frac{1}{i}.
}
}

\begin{lemma}\label{thm-C2-estimate}
There exists $C_4>0$ depending only on $\cM_0$ such that 
\eq{ \label{curvature-estimate}
\ka_i(\cdot,t) \leq C_4.
}
\end{lemma}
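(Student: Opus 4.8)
The plan is to bound the principal curvatures from above by applying a tensor maximum principle to the quantity $h_i^j/u$, whose evolution equation is already recorded in \autoref{curvature est MP}. Since we have uniform two-sided bounds on $u$ (positive lower bound from \autoref{prop-C1-estimate}, upper bound from the $C^0$-estimate \autoref{prop-C0-estimate}), and a uniform positive lower bound $F\geq C_2$ together with the upper bound $F\leq \max_{\cM_0}F$ from \autoref{lemma-estimate-F}, controlling $\ka_i$ is equivalent to controlling $h_i^j/u$. First I would set $W_i^j := h_i^j/u$ and consider, for a large constant $\Lambda>0$ to be chosen, the tensor $\Lambda\, g_i^j - W_i^j$; the goal is to show this tensor stays positive definite along the flow, i.e.\ its smallest eigenvalue never drops to $0$. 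Equivalently, I would track $\phi(t) := \max_{\cM_t}\max_{|\xi|=1} W_i^j\xi^i\xi_j$ and show it stays bounded.

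The main steps: \emph{(i)} At a first time $t_0$ and point $p_0$ where $\phi$ attains a new maximum in the direction $\xi=e_1$ (say), use Hamilton's tensor maximum principle, so that at $(p_0,t_0)$ we may work in a frame diagonalizing $h_i^j$ with $\ka_1 = \max_i \ka_i$ the largest, $\nabla W_1^1 = 0$, and the spatial-derivative gradient terms in \autoref{curvature est MP} (the term $\tfrac{2}{F^2}F^{kl}\nabla_k W_i^j \nabla_l u$) vanish when contracted with $\xi$. \emph{(ii)} The term $\tfrac{1}{uF^2}F^{kl,pq}\nabla_i h_{pq}\nabla^j h_{kl}$ contracted with $e_1\otimes e_1$ is $\le 0$ by concavity of $F$ (recorded as ``$F$ is concave'' on $\Ga_{\al,k}$), so it is discarded; the term $-\tfrac{2}{uF^3}\nabla_1 F\nabla_1 F\le 0$ is also discarded. \emph{(iii)} What remains of $\partial_t W_1^1$ at the maximum, after dropping the Laplacian and the (vanishing) transport term, is a pointwise algebraic expression in $\ka_1$, $u$, $F$ and the quantities $F^{kl}h_{kl}$, $F^{kl}(h^2)_{kl}$; I would use $F^{kl}h_{kl}\le F$ and $F^{kl}(h^2)_{kl}\ge F^2$ (\autoref{lem-algebraic-property-F}), plus $(h^2)_1^1 = \ka_1^2 = (uW_1^1)^2$, to see that the dominant term in $\ka_1$ is $-\big(\tfrac{F^{kl}h_{kl}}{F}+1\big)\tfrac{(h^2)_1^1}{uF}\le -\tfrac{(uW_1^1)^2}{uF} = -\tfrac{u (W_1^1)^2}{F}$, which is \emph{quadratically negative} in $\phi=W_1^1$, while all other surviving terms are at most linear in $W_1^1$. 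Hence $\tfrac{d}{dt}\phi \le C - c\,\phi^2$ for positive constants depending only on $\cM_0$, which forces $\phi$ to stay bounded by $\max\{\phi(0), \sqrt{C/c}\}$, giving \eqref{curvature-estimate}.

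The step I expect to be the real obstacle is \emph{(iii)}: carefully checking that every term in \autoref{curvature est MP} that is \emph{not} manifestly negative is genuinely subquadratic in $W_1^1$ after using the structural inequalities for $F$. In particular one must confirm that $\big(\tfrac{1}{uF}(1-\tfrac{F^{kl}h_{kl}}{F})-2\big)\tfrac{h_1^1}{u}$ is at most linear in $W_1^1$ — which holds because $1-\tfrac{F^{kl}h_{kl}}{F}\ge 0$ and $\tfrac{1}{uF}$ is bounded, so this term is bounded by a constant times $W_1^1$ — and that the ``good'' quadratic term is not cancelled. One subtlety is that $h_i^j$ is not \emph{a priori} known to have a positive lower bound on its eigenvalues, but since $\cM_t$ is $(\al,k)$-convex we have $\hat\ka\in\Ga_{k-1}\subset\Ga_1$, hence $E_1(\hat\ka)>0$, so $E_1(\ka)>-n\al$ and in any case $(h^2)_1^1=\ka_1^2$ is what appears, which is nonnegative regardless of sign; combined with $\ka_1=\max_i\ka_i$ being large at the maximum (if $\phi$ is large, then since $u$ is bounded, $\ka_1$ is large and positive), the quadratic term dominates. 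Once the differential inequality $\phi'\le C-c\phi^2$ is established, the conclusion is immediate.
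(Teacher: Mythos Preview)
Your proposal is correct and follows essentially the same route as the paper: both apply the tensor maximum principle to $h_i^j/u$ using the evolution equation of \autoref{curvature est MP}, discard the Hessian-of-$F$ term via concavity and the $-\tfrac{2}{uF^3}|\nabla F|^2$ term by sign, and then use $1-\tfrac{F^{kl}h_{kl}}{F}\ge 0$ together with the uniform bounds on $u$ and $F$ to reduce the remaining zero-order terms to an inequality of the form $\tfrac{d}{dt}\max(h_i^j/u)\le -c\,(h_i^j/u)^2 + C\,(h_i^j/u)$, from which the curvature bound follows. The paper's write-up is terser but the argument is the same.
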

\begin{proof}
In view of \autoref{curvature est MP}, since $F$ is concave, we have 
\eq{
\frac{d}{dt}\max \frac{h_i^j}{u} &\leq \frac{u}{F} \bigg(-\frac{(h^2)_i^j}{u^2}\bigg)+2\frac{h_i^j}{u} \leq -C_0 \frac{(h^2)_i^j}{u^2}+2\frac{h_i^j}{u},
}
where we used $1-\frac{F^{kl}h_{kl}}{F}\geq 0$ and that $u$ is uniformly bounded below away from zero and $F$ is uniformly bounded above (cf. \autoref{prop-C1-estimate} and \autoref{lemma-estimate-F}). Therefore, $\frac{h_i^j}{u}$ is uniformly bounded above. Since $u\leq r=e^{\ga} \leq C$ due to \autoref{prop-C0-estimate}, $h_i^j$ is uniformly bounded above.
\end{proof}

Now we complete the a priori estimates for the flow \eqref{sum-hessian-flow}.
\begin{prop}\label{prop-longtime-existence}
    Under the assumption of \autoref{main-thm-flow}, the flow \eqref{sum-hessian-flow} exists for all times with uniform $C^{\ell}$ bounds for all $\ell\in \bbN$.
\end{prop}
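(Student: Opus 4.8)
The proposition collects the endpoint of the a priori analysis, so the plan is to promote the estimates of \autoref{prop-C0-estimate}--\autoref{thm-C2-estimate} to full $C^\ell$ bounds via the Evans--Krylov and parabolic Schauder theory, and then to obtain all-time existence by a continuation argument. Since $\cM_0$ is star-shaped, I pass to the radial-graph description, so that \eqref{sum-hessian-flow} becomes the scalar initial value problem \eqref{general-flow-radial}, i.e. $\partial_t\ga=G(\ov\nabla^2\ga,\ov\nabla\ga,\ga)$ with $G$ as in \eqref{eq-partial_t-gamma}. Because $h_i^j$ is affine in $\ov\nabla^2\ga$ and $F$ is strictly increasing with $F^{ii}>0$ on $\Ga_{\al,k}$, the linearization of $G$ in its Hessian slot is positive definite whenever $\hat\ka\in\Ga_{\al,k}$; as $\cM_0$ is $(\al,k)$-convex, standard parabolic theory gives a unique smooth solution on a maximal interval $[0,T)$. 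Along $[0,T)$ the solution stays star-shaped ($u\ge\min_{\cM_0}u>0$ by \autoref{prop-C1-estimate}) and $(\al,k)$-convex (by \autoref{preserving (a,k)-convexity}), so the radial parametrization remains valid.

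The crux is to show that the shifted principal curvatures remain in a fixed compact subset of the \emph{open} cone $\Ga_{\al,k}$, so that \eqref{general-flow-radial} is \emph{uniformly} parabolic. From \autoref{thm-C2-estimate} we have $\ka_i\le C_4$, hence $\hat\ka_i\le C_4+\al$ for all $i$; from \autoref{preserving (a,k)-convexity} we have $E_i(\hat\ka)\ge C_3^i$ for $i=1,\dots,k-1$ and $E_k(\hat\ka)-\al E_{k-1}(\hat\ka)\ge(C_3-\al)C_3^{k-1}>0$. Since $E_1(\hat\ka)\ge C_3$ forces $\hat\ka_j=nE_1(\hat\ka)-\sum_{i\ne j}\hat\ka_i\ge nC_3-(n-1)(C_4+\al)$, no eigenvalue can escape to $-\infty$; hence the admissible set of $\hat\ka$ is closed and bounded, and it sits strictly inside $\Ga_{\al,k}$ because all defining inequalities hold with a uniform gap. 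On this compact set $K\subset\Ga_{\al,k}$, the function $F$ and its first and second derivatives are uniformly bounded and $\dot{F}$ is uniformly positive definite; together with the $C^0$ bound \eqref{C0-estimate} and the $C^1$ bound \eqref{C1-estimate}, this yields a uniform $C^2$ bound on $\ga$ and uniform parabolicity of \eqref{general-flow-radial} on $[0,T)$, with all structural constants depending only on $\cM_0$.

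With uniform parabolicity in hand, the regularity upgrade is standard. Since $F>0$ is concave on $\Ga_{\al,k}$ and $h$ is affine in $\ov\nabla^2\ga$, the map $\ov\nabla^2\ga\mapsto 1/F$ is convex, so $G$ is convex in its Hessian argument; the Evans--Krylov theorem for uniformly parabolic, convex fully nonlinear equations then gives a uniform space-time $C^{2,\beta}$ estimate on $[0,T)$ (the smoothness of $\ga_0$ taking care of the initial slice), with $\beta$ and the bound depending only on $\cM_0$. Differentiating \eqref{general-flow-radial} in space and time and applying linear parabolic Schauder theory then bootstraps this to uniform $C^\ell$ bounds for every $\ell\in\bbN$. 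Finally, if $T<\infty$ these bounds force $\ga(\cdot,t)\to\ga(\cdot,T)$ in $C^\infty(\bbS^n)$, and $\ga(\cdot,T)$ defines a smooth star-shaped, $(\al,k)$-convex hypersurface; solving \eqref{general-flow-radial} afresh with this initial datum extends the solution past $T$, contradicting maximality. Therefore $T=\infty$ and the $C^\ell$ bounds hold globally.

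Everything downstream of the compactness claim is routine fully nonlinear parabolic machinery; I expect the only delicate point to be the second paragraph, namely verifying that preservation of $(\al,k)$-convexity (\autoref{preserving (a,k)-convexity}) together with the one-sided curvature bound (\autoref{thm-C2-estimate}) really confines $\hat\ka$ to a compact subset of the open cone, so that the flow equation cannot degenerate — this rests on the chain \autoref{lemma-estimate-F}$\Rightarrow$\autoref{preserving (a,k)-convexity}$\Rightarrow$\autoref{thm-C2-estimate}.
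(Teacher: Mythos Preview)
Your proposal is correct and follows essentially the same route as the paper: combine the $C^0$, $C^1$ and upper curvature estimates with \autoref{preserving (a,k)-convexity} (and, implicitly, \autoref{boundary of Ga-al-k-cone}) to trap $\hat\ka$ in a fixed compact subset of $\Ga_{\al,k}$, deduce uniform parabolicity, and then invoke Evans--Krylov/Caffarelli $C^{2,\beta}$ theory together with Schauder to bootstrap and continue. Your explicit eigenvalue lower bound and the observation that $G$ is convex in $\ov\nabla^2\ga$ (since $1/F$ is convex when $F>0$ is concave) are minor elaborations of the same argument.
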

\begin{proof}
    From \autoref{prop-C0-estimate}, \autoref{prop-C1-estimate}, we have uniform $C^0$ and $C^1$ estimates. In view of \autoref{thm-C2-estimate}, the principal curvatures are uniformly bounded above. Hence, in view of  \autoref{boundary of Ga-al-k-cone} and \autoref{preserving (a,k)-convexity}, the shifted principal curvatures $\hat{\ka}$ lie within a compact region (independent of time) of the convex set $\Ga_{\alpha,k}$. Therefore, the operator $\cL=\partial_t-\frac{1}{F^2}F^{ij}\nabla_i\nabla_j$ is uniformly parabolic. Moreover, since $F$ is concave, we can apply Krylov-Safonov's H\"{o}lder estimate and Caffarelli \cite[Thm. 3.1]{Caf89}  (or \cite[Thm. 8.1]{CC95}) to \eqref{general-flow-radial} to deduce the $C^{2,\beta}$ bounds (see \cite[Appendix]{BH17}). Finally,  the uniform $C^{\ell}$ bounds (for all $\ell\in \bbN$) follows from Schauder theory. Therefore, the flow exists for all positive times.
\end{proof}

\section{Approximation of  weakly \texorpdfstring{$(\al,2)$}{}-convex hypersurfaces}\label{sec: approximation}
In this section, we employ a mean curvature type flow to prove the following theorem.
\begin{thm}\label{approximation} 
Let $\cM$ be a star-shaped, weakly $(\alpha,2)$-convex hypersurface. Then we can smoothly approximate $\cM$ by a sequence of star-shaped and $(\alpha,2)$-convex hypersurfaces.
\end{thm}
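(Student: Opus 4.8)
The plan is to deform $\cM$ by a suitably rescaled mean curvature type flow that, for short times, pushes a weakly $(\al,2)$-convex hypersurface into the open cone $\ti\Ga_{\al,2}$ (equivalently, into the class of $(\al,2)$-convex hypersurfaces, by \autoref{rem: conj}), while preserving star-shapedness. Concretely, I would consider the flow $\dot x = -(\cH - \underline{\cH})\nu$ or a volume/radius-preserving variant, written in radial graph form over $\bbS^n$ as a scalar parabolic PDE for $\ga = \log r$; short-time existence is standard since the linearization is uniformly parabolic on the compact initial hypersurface. The family $\cM_t$ for small $t>0$ will serve as the approximating sequence (take $t\to 0^+$), so the whole content is (a) short-time existence, (b) preservation of star-shapedness, and (c) the instantaneous improvement of convexity from "weak" to "strict."

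For step (b), star-shapedness follows from the evolution equation for the support function $u$ (or for $\ga$): the quantity $\min_{\cM_t} u$ satisfies a differential inequality of the form $\tfrac{d}{dt}\min u \ge -C\min u$ coming from the zeroth-order terms in \eqref{evol-u}-type computations for the mean curvature flow, so $u$ stays positive on a short time interval depending only on $\cM_0$. For step (c), the key is a tensorial maximum principle in the spirit of Hamilton applied to the shifted Weingarten operator: one shows that the set where $\hat\ka = \ka + \al(1,\dots,1)$ lies in $\ov{\Ga}_{\al,2} = \ov{\Ga}_1 \cap \{E_2 \ge \al E_1\}$, i.e. where $E_1(\hat\ka)\ge 0$ and $E_2(\hat\ka)-\al E_1(\hat\ka)\ge 0$, is preserved, and moreover that the inequalities become strict for $t>0$. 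Since $\ti\Ga_{\al,2}$ is a convex cone, the preservation part is amenable to Hamilton's maximum principle for systems once one checks that the zeroth-order reaction term (built from $(h^2)_i^j$ and the lower-order terms in the mean curvature flow evolution of $h_i^j/u$) points into the cone along its boundary; the strong maximum principle then upgrades "lies in the closed cone" to "lies in the open cone" for all positive times, unless $\cM_0$ is already totally umbilic (a sphere), which is a trivial case.

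The main obstacle I expect is verifying the boundary/reaction condition for the tensor maximum principle precisely on $\partial\ti\Ga_{\al,2}$: one must ensure that at a point where, say, $E_2(\hat\ka) - \al E_1(\hat\ka) = 0$ (with $E_1(\hat\ka)>0$), the evolution does not push $\hat\ka$ out of the cone. This requires a careful computation using the concavity/monotonicity properties of $E_2$ on $\ov{\Ga}_1$, the identities in \autoref{lem-Em-property}, and the Newton-Maclaurin inequalities of \autoref{lemma-Newton-ineq}, together with the structure of the lower-order terms in the evolution of the shifted second fundamental form under the chosen mean curvature type flow; one should also exploit that $\underline{\cH}$ (the subtracted term, e.g. a spatial minimum or average of $\cH$) contributes only a favorably-signed zeroth-order term. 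A secondary technical point is that $\cH$ is not smooth at umbilic points of the boundary of the cone in the direction $E_1(\hat\ka)=0$, but this corner is excluded: on a star-shaped hypersurface $u>0$ forces, via \eqref{identity-2} and the graph representation, that one cannot have $E_1(\hat\ka) = 0$ together with $E_2(\hat\ka) - \al E_1(\hat\ka)=0$ unless the hypersurface degenerates, so the relevant portion of $\partial\ti\Ga_{\al,2}$ where the argument is applied is the smooth face $\{E_2(\hat\ka) = \al E_1(\hat\ka)\}\cap\Ga_1$. Once these points are handled, taking $t\to 0^+$ and invoking the $C^\infty$ bounds from parabolic regularity yields the desired smooth approximating sequence.
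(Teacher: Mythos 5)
Your overall strategy---deform $\cM$ by a short-time parabolic flow so that the set $\ov{\ti\Ga}_{\al,2}$ is preserved and, by the strong maximum principle, the curvatures are pushed into the open cone $\ti\Ga_{\al,2}$ for $t>0$---is exactly the paper's strategy. But the specific flow you propose does not work, and the reason is a sign problem in precisely the reaction term you flag as "the main obstacle."

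The paper uses $\dot x = -(E_1+\beta)\nu$ with $\beta>0$ a \emph{large} constant, for which Simons' identity gives
\eq{
\partial_t h_i^j = \tfrac{1}{n}\Delta h_i^j + \tfrac{|A|^2}{n} h_i^j + \beta (h^2)_i^j ,
}
so the reaction ODE is $\dot\kappa_i = \tfrac{|A|^2}{n}\kappa_i + \beta\kappa_i^2$. The positive $\kappa_i^2$ coefficient $\beta$ is indispensable: at a boundary point of $\ti\Ga_{\al,2}$ with $E_2+\al E_1=0$ and $E_1>0$, the paper computes (using \autoref{lem-Em-property} and $E_3+\al E_2\le 0$ from \autoref{lemma-new-Newton-ineq}) that
\eq{
\tfrac{d}{dt}(E_2+\al E_1) \ge \al E_1\bigl(\al\beta - \tfrac{1}{n}|A|^2\bigr),
}
which is strictly positive once $\beta$ is chosen to satisfy $\al\beta > \tfrac{1}{n}|A|^2$ on the (short) time interval under consideration. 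Your proposed flow $\dot x = -(\cH-\underline{\cH})\nu$ (or a volume-preserving variant) \emph{subtracts} a constant from the speed, and the corresponding reaction is $\Delta h_i^j + |A|^2 h_i^j - \underline{\cH}(h^2)_i^j$. Since $\underline{\cH}>0$ for a mean-convex hypersurface (recall $\ti\Ga_{\al,2}\subset\Ga_1$), the coefficient of $(h^2)_i^j$ has the \emph{wrong sign}, and the same computation yields $\tfrac{d}{dt}(E_2+\al E_1)<0$ at such boundary points: your flow would push $\kappa$ \emph{out of} the cone, not into it. Even if you flipped the sign, the magnitude $\underline{\cH}$ is dictated by the hypersurface and cannot be made as large as needed, whereas the paper is free to take $\beta \sim \max|A|^2/\al$. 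So the choice of flow is not a free parameter here: adding a large positive constant to the mean-curvature speed is the crux, not an incidental normalization.

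A secondary error: you argue that star-shapedness excludes the corner of $\partial\ti\Ga_{\al,2}$ where $E_1=0$. That is not what happens, and star-shapedness is irrelevant to it. On $\partial\ti\Ga_{\al,2}$, $E_1=0$ forces $E_2=0$, and $E_1=E_2=0$ for $\la\in\ov\Ga_1$ forces $\la=0$, i.e.\ the corner is just the apex of the cone. The paper handles this case not by excluding it but by observing that $\ov\Ga_2$ is a convex cone, so by \autoref{lemma-cone-property} its supporting functionals are linear and the reaction vector lies in the tangent cone; one then applies Hamilton's tensor maximum principle. Your exception clause "unless $\cM_0$ is already totally umbilic (a sphere)" is also misplaced: a sphere is already $(\al,2)$-convex, and the strong maximum principle argument (applied to the scalar $F = (E_2+\al E_1)/(E_1+\al)$, as in the paper) gives $F>0$ for $t>0$ without any such caveat.
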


Recall from the first few lines of \autoref{rem: conj} that $\cM$ is $(\al,2)$-convex if and only if its principal curvatures lie in the convex set
\eq{
\ti \Ga_{\alpha,2}=\Ga_1\cap \{\la\in \bbR^n:\, E_2(\la)+\al E_1(\la)>0\}.
}
Moreover, we have
\eq{\label{boundary of (alpha,2)-cone}
\partial \ti \Ga_{\alpha,2}= \{\la\in \bar{\Gamma}_1:\, E_2(\la)+\al E_1(\la)=0\}.
}

The proof of \autoref{approximation} will be completed in several steps. We consider a modified mean curvature flow in $\bbR^{n+1}$:
\eq{ \label{modified-mean-curvature-flow}
x&: \cN^n\times [0,T)\to \bbR^{n+1}\\
\dot{x}&=-(E_1+\beta)\nu,
}
where $\beta$ is a positive constant to be determined later. Write $\cN_t$ for the solution to this flow with $\cN_0$ a closed, embedded, smooth hypersurface in $\bbR^{n+1}$.

Along the flow \eqref{modified-mean-curvature-flow}, by \eqref{evol-Weingarten-matrix-general} and Simon's identity we have
\eq{ \label{evol-Weingarten-mMCF}
\partial_th_i^j &=\nabla^j\nabla_i (E_1+\beta)+(E_1+\beta)(h^2)_i^j \\
&=\frac{1}{n}\De h_i^j+\frac{|A|^2}{n}h_i^j-E_1 (h^2)_i^j+(E_1+\beta)(h^2)_i^j\\
&=\frac{1}{n}\De h_i^j+\frac{|A|^2}{n}h_i^j+\beta(h^2)_i^j.
}

Given a convex subset $C$ of $\bbR^n$, let $\op{SC}$ denote the set of supporting affine functionals for $C$; that is, the set of affine linear maps $\ell:\bbR^n \ra \bbR$ satisfying $|D\ell|=1$ and $\ell(z)\geq 0$ for all $z\in C$ with $\ell(z_0)=0$ at some point $z_0\in \partial C$ (cf. \cite{ACGL20}).

\begin{lemma}\cite[Lem. 9.3]{ACGL20}\label{lemma-cone-property}
Let $C$ be a convex subset of $\bbR^n$.  If $C$ is a cone in $\bbR^n$, then the supporting affine functionals are linear functionals.
\end{lemma}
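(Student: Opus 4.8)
The plan is to write a supporting affine functional of $C$ in coordinates as $\ell(z)=\langle v,z\rangle+c$ with $\lvert v\rvert=\lvert D\ell\rvert=1$, and to show that the homogeneity built into the cone structure of $C$ forces the constant term $c$ to vanish, which is exactly the assertion that $\ell$ is linear. The first step is to pass to the closed cone: since $\ell$ is continuous and $\ell\geq 0$ on $C$, we also have $\ell\geq 0$ on $\bar C$, and $\bar C$ is again a convex cone containing $0$ and containing the distinguished boundary point $z_0\in\partial C$ at which $\ell(z_0)=0$.

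Next I would exploit the cone property directly. For every $z\in\bar C$ and every $t>0$ we have $tz\in\bar C$, so
\[
0\leq \ell(tz)=t\langle v,z\rangle+c .
\]
Dividing by $t$ and sending $t\to\infty$ yields $\langle v,z\rangle\geq 0$ for all $z\in\bar C$; in particular $\langle v,z_0\rangle\geq 0$. Evaluating $\ell$ at $0\in\bar C$ gives $c=\ell(0)\geq 0$.

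Finally I would combine these observations: the equality $0=\ell(z_0)=\langle v,z_0\rangle+c$ exhibits $0$ as a sum of the two nonnegative quantities $\langle v,z_0\rangle$ and $c$, so both vanish. In particular $c=0$, hence $\ell(z)=\langle v,z\rangle$ is a linear functional, as claimed.

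The argument is short, and there is no serious obstacle; the only point needing a little care is that the prescribed zero of $\ell$ is only assumed to lie in $\partial C$, which need not be a subset of $C$ when $C$ is not closed. This is precisely why the first step replaces $C$ by $\bar C$: nonnegativity of $\ell$ is preserved under taking closures, $\bar C$ is still a cone, and $z_0\in\bar C$, after which the homogeneity scaling applies verbatim.
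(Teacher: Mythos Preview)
Your argument is correct. The paper does not supply its own proof of this lemma; it simply cites \cite[Lem.~9.3]{ACGL20} and states the result, so there is nothing to compare against beyond noting that your short scaling argument (pass to $\bar C$, use $tz\in\bar C$ to force $\langle v,z\rangle\geq 0$, evaluate at $0$ and at $z_0$ to kill the constant term) is the standard way to prove this fact and is entirely sound.
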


Choose $\beta:=\frac{2}{n\al}\max_{\cN_0}|A|^2$. On a possibly very small time interval, denoted again by $[0,T)$, we have
\eq{
\max_{\cN_t}|A|^2 < 2\max_{\cN_0}|A|^2, \quad t\in [0,T).
}

\begin{lemma}\label{weakly (a,k)-convexity}
If $\cN_0$ is weakly $(\al,2)$-convex, then $\cN_t$ is weakly $(\al,2)$-convex for $t\in [0,T)$.
\end{lemma}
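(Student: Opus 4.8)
The plan is to apply a tensor maximum principle (a version of Hamilton's maximum principle for systems, adapted to curvature-pinching cones, in the form used by Andrews--Chen--Guan--Langford \cite{ACGL20}) to the evolving Weingarten operator $\cW = (h_i^j)$ along the flow \eqref{modified-mean-curvature-flow}, using the evolution equation \eqref{evol-Weingarten-mMCF}. The convex set that we want to preserve is the closure $\overline{\ti\Ga_{\alpha,2}}$, which by \eqref{boundary of (alpha,2)-cone} is $\{\la \in \overline{\Ga_1} : E_2(\la)+\al E_1(\la)\ge 0\}$. The point of the choice $\beta := \frac{2}{n\al}\max_{\cN_0}|A|^2$ and of shrinking to a short time interval on which $\max_{\cN_t}|A|^2 < 2\max_{\cN_0}|A|^2$ is to make the reaction term $\frac{|A|^2}{n}h_i^j + \beta (h^2)_i^j$ point \emph{into} the cone at every boundary point, for every admissible $\cW$ on $[0,T)$.

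**Key steps.** First I would recall that, since $\overline{\ti\Ga_{\alpha,2}}$ is a convex $O(n)$-invariant set, it suffices (by the standard reduction in \cite{ACGL20}, and since $E_1+\beta>0$ keeps the first-order structure parabolic) to check the boundary condition on the reaction term: for every symmetric matrix $W$ whose eigenvalue vector $\la(W)$ lies on $\partial\overline{\ti\Ga_{\alpha,2}}$ and every supporting affine functional $\ell \in \op{SC}$ vanishing at $\la(W)$, one has $\ell\big(\text{reaction applied to }W\big) \ge 0$, i.e.
\eq{
\ell\Big(\tfrac{|A|^2}{n}\la + \beta \la^2\Big) \ge 0,
}
where $\la^2 = (\la_1^2,\ldots,\la_n^2)$ and $|A|^2 = |\la|^2$. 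Next, by \autoref{lemma-cone-property} (Lemma 9.3 of \cite{ACGL20}), on the part of $\partial\overline{\ti\Ga_{\alpha,2}}$ that lies in the \emph{open} half-space $\{E_2+\al E_1>0\}$ — which is exactly $\partial\Ga_1\cap\{E_2+\al E_1>0\}$, a piece of the boundary of the cone $\Ga_1$ — the supporting functionals are \emph{linear}, namely $\ell(\la)=\la_m$ for an index $m$ with $\la_m=0$ (the minimal eigenvalue), so the term $\ell(\beta\la^2) = \beta\la_m^2 = 0 \ge 0$ and $\ell(\tfrac{|A|^2}{n}\la) = \tfrac{|A|^2}{n}\la_m = 0$; the boundary condition holds trivially there. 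The remaining and genuinely non-trivial portion of the boundary is where $E_2(\la)+\al E_1(\la) = 0$ while $\la \in \Ga_1$ (so $E_1(\la)>0$); there $\partial\overline{\ti\Ga_{\alpha,2}}$ is a smooth hypersurface and the unique (up to the normalization $|D\ell|=1$) supporting functional is the affine function $\ell(\la) = c\,(E_2(\la)+\al E_1(\la))$ with $c>0$ chosen so that $|D\ell|=1$; note $\ell$ is affine in $\la$ because $E_2+\al E_1$ is a quadratic-plus-linear polynomial whose gradient at a fixed point is an affine map — more precisely I would write $\ell$ as the first-order Taylor polynomial of $E_2+\al E_1$ at $\la(W)$, which agrees with $E_2+\al E_1$ up to the quadratic correction. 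Then I must verify $\ell(\tfrac{|A|^2}{n}\la + \beta\la^2) \ge 0$ at such $\la$. This reduces, after using $\sum_i \frac{\partial (E_2+\al E_1)}{\partial \la_i}\la_i = 2E_2 + \al E_1$ (by Euler/\autoref{lem-Em-property}) and $\sum_i \frac{\partial(E_2+\al E_1)}{\partial\la_i}\la_i^2 = \sum_i(E_1 - \la_i + \al)\la_i^2 = E_1|A|^2 - \sum\la_i^3 + \al|A|^2$ (again \autoref{lem-Em-property}), to an explicit polynomial inequality in the $\la_i$ subject to $E_2(\la) = -\al E_1(\la)$ and $\la\in\Ga_1$; the constraint $\beta\al n/2 \ge \max|A|^2 \ge |A|^2$ is precisely what is needed to dominate the (possibly negative) cubic terms by the positive $\beta$-term, completing the check.

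**Main obstacle.** The routine part is the tensor-maximum-principle machinery; the crux is the final algebraic inequality on the face $\{E_2 + \al E_1 = 0\}\cap\Ga_1$ — showing that the choice $\beta = \frac{2}{n\al}\max_{\cN_0}|A|^2$ (together with $|A|^2<2\max_{\cN_0}|A|^2$ on $[0,T)$, so $\tfrac{n\al}{2}\beta > |A|^2$) forces $\ell(\tfrac{|A|^2}{n}\la+\beta\la^2)\ge 0$. Concretely, after substituting $E_2 = -\al E_1$ one finds that the linear-in-$\la$ part contributes $\tfrac{|A|^2}{n}(2E_2+\al E_1) + \beta(\text{linear part of }\la^2\text{ bracket})$ and the $\beta\la^2$ part contributes something controlled by $\beta E_1 |A|^2$ minus cubic terms; since on $\Ga_1$ with $E_2<0$ one has good sign control of $E_1>0$ and bounds relating $\sum\la_i^3$ to $|A|^2 E_1$, the inequality follows once $\beta$ is large relative to $|A|^2/\al$. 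I would isolate this as the one computation to do carefully; everything else (invariance reduction, the linear-functional case via \autoref{lemma-cone-property}, short-time preservation of $\max|A|^2 < 2\max_{\cN_0}|A|^2$) is standard. Finally, the maximum principle then yields that $\la(\cW)$ stays in $\overline{\ti\Ga_{\alpha,2}}$ for all $t\in[0,T)$, i.e. $\cN_t$ is weakly $(\al,2)$-convex, which is the assertion.
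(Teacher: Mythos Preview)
Your overall framework (Hamilton/Chow--Lu tensor maximum principle applied to the reaction ODE coming from \eqref{evol-Weingarten-mMCF}) matches the paper's, but the execution has two concrete errors.

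First, your boundary decomposition is wrong. By \eqref{boundary of (alpha,2)-cone} the entire boundary of $\ti\Ga_{\al,2}$ sits on the level set $\{E_2+\al E_1=0\}\cap\bar\Ga_1$; there is \emph{no} piece with $E_2+\al E_1>0$, because on $\partial\Ga_1=\{E_1=0\}$ one has $E_2\le E_1^2=0$ by Newton. Moreover, the supporting functional of $\bar\Ga_1$ (a half-space) is $\ell(\la)=cE_1(\la)$, not $\ell(\la)=\la_m$; the latter is the supporting functional of the \emph{positive orthant} $\bar\Ga_n$, a different cone. The genuine case split is: (a) $E_2+\al E_1=0$ with $E_1>0$ (smooth boundary), and (b) $E_1=E_2=0$, i.e.\ $\la\in\partial\Ga_2$ (corner). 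The paper handles (b) by invoking \autoref{lemma-cone-property} for the cone $\Ga_2$ and the known fact that the reaction term preserves $\bar\Ga_2$; you do not address this case.

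Second, on the smooth piece (a) the ``crux'' computation you leave open requires a specific input you do not name. Differentiating along the reaction ODE gives
\eq{
\frac{d}{dt}(E_2+\al E_1)\Big|_{E_2+\al E_1=0}=-\frac{\al}{n}E_1|A|^2-\beta(n-2)E_3+\al^2\beta(n-1)E_1,
}
and for $n\ge 3$ the $E_3$-term has no sign a priori. The paper closes this using \autoref{lemma-new-Newton-ineq} (equivalently, the standard Newton inequality $E_2^2\ge E_1E_3$ at $E_2=-\al E_1$, $E_1>0$), which yields $E_3\le \al^2 E_1$, hence $E_3+\al E_2\le 0$; substituting gives $\frac{d}{dt}(E_2+\al E_1)\ge \al E_1\big(\al\beta-\tfrac{1}{n}|A|^2\big)>0$ on $[0,T)$ by the choice of $\beta$ and the smallness of $|A|^2$. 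Your vague ``bounds relating $\sum\la_i^3$ to $|A|^2E_1$'' is not the right estimate and would not close the argument; the needed bound is $E_3\le\al^2 E_1$, which is precisely the Newton inequality restricted to the boundary face.
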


\begin{proof}
Let $Q:=\{\kappa\in \bbR^n: \sum_{i=1}^n\kappa_i^2\geq n\alpha \beta \}$.
On the time interval $[0,T)$, $A(\cdot,t)$ avoids the set $Q$. Let us define the vector
\eq{
\Phi(\kappa)=\(\frac{1}{n}\kappa_1\sum_{i=1}^n\kappa_i^2+\beta \kappa_1^2,\ldots, \frac{1}{n}\kappa_n\sum_{i=1}^n\kappa_i^2+\beta \kappa_n^2\).
}
Recall that  $\bar{\ti\Ga}_{\alpha,2}$ is a closed convex set and contains the positive cone. By the maximum principle \cite[Thm. 4]{CL04} (see also \cite[Thm. 4.2]{Ham86}), it suffices to show that every solution $\ka(t)=(\kappa_1(t),\ldots,\kappa_n(t))$ to the ODE
\eq{
\left\{\begin{aligned}\frac{d}{dt}\kappa(t)&=\Phi(\kappa(t)),\\
\kappa(0)&=\kappa \in \ov{\ti \Ga}_{\al,2}\setminus Q 
\end{aligned}
\right.
}
either remains in $\ov{\ti \Ga}_{\al,2}$ or else enters $Q$ at some time $t_{0}\in (0,T)$.

Suppose $\ka(t)$ never enters $Q$. We show that $\ka(t)$ remains in $\ov{\ti\Ga}_{\al,2}$. If $\kappa \in  \partial \ti\Ga_{\al,2}$, then, by \eqref{boundary of (alpha,2)-cone}, we must have $E_2+\al E_1=0$ at $\ka$. If $E_1=E_2=0$, then $\kappa \in \partial \Ga_{2}$. From \autoref{lemma-cone-property} we deduce that $\Phi(\kappa)$ is in the tangent cone to $\bar{\Ga}_{2}$ at $\kappa$. Therefore, due to \cite[Thm. 4.1]{Ham86}, $\kappa(t)$ remains in $\bar{\Ga}_{2}$ and hence in $\bar{\ti\Ga}_{\al,2}$. The only remaining case is when $E_2+\al E_1=0$ and $E_{1}>0$. 
In view of \autoref{lemma-new-Newton-ineq}, $(E_{2}+\alpha E_{1})^2\geq (E_{3}+\alpha E_2)(E_{1}+\alpha )$, and hence $E_{3}+\al E_{2}\leq 0$.
Now we compute
\eq{
\frac{d}{dt}(E_{2}+\al E_{1})&= -\frac{\al}{n}E_{1}\sum_{i=1}^n\kappa_i^2 -\beta (n-2)E_{3}+\al^2\beta (n-1)E_{1}\\
&\geq \al E_{1}\bigg(\al \beta-\frac{1}{n}\sum_{i=1}^n\kappa_i^2\bigg)\\
&>0.
}
Hence, $\kappa(t)$ cannot escape $\ov{\ti\Ga}_{\al,2}$.
\end{proof}

\begin{proof}[Proof of \autoref{approximation}]
Let $\cN_0=\cM$. We show that $\cN_t$ is $(\al,2)$-convex for all $t\in (0,T)$.
Due to \autoref{weakly (a,k)-convexity}, $F=\frac{E_2+\al E_{1}}{E_{1}+\al }\geq 0$ for $t\in [0,T)$ (in this proof, all $E_i$ are calculated at the principal curvatures of $\cN_t$). We calculate
\eq{
\partial_t F&=F^i_j\partial_t h_i^j \\
            &=F^i_j\(\frac{1}{n}\De h_i^j+\frac{|A|^2}{n}h_i^j+\beta (h^2)_i^j\)\\
            &=\frac{1}{n}\De F-\frac{1}{n}F^{ij,pq}\nabla_m h_{pq} \nabla^m h_{ij}+\frac{|A|^2}{n}F^{ij}h_{ij}+\beta F^{ij}(h^2)_{ij}.
}
Since $F$ is concave in $\Ga_{1}$, for $t\in [0,T)$:
\eq{
\partial_t F \geq \frac{1}{n}\De F+ \frac{|A|^2}{n}F^{ij}h_{ij}+\beta F^{ij}(h^2)_{ij}.
}
Using \autoref{lem-Em-property}, we calculate
\eq{
F^{ij}h_{ij}&=\sum_i \(\frac{E_2^{ii}+\al E_1^{ii}}{E_1+\al}-\frac{E_2+\al E_1}{(E_1+\al)^2}E_1^{ii}\)\ka_i\\
&=\frac{E_1E_2+2\al E_2+\al^2 E_1}{(E_1+\al)^2}\\
&=-\frac{\al E_{1}}{E_{1}+\al}+\(1+\frac{\al}{E_1+\al}\)F\\
&\geq -\frac{\al E_1}{E_1+\al}
}
and
\eq{
F^{ij}(h^2)_{ij}&=\sum_i \(\frac{E_2^{ii}+\al E_1^{ii}}{E_1+\al}-\frac{E_2+\al E_1}{(E_1+\al)^2}E_1^{ii}\)\ka_i^2\\
&=\frac{(n-1)E_{2}+n\al E_{1}}{E_{1}+\al }F -\frac{(n-2)E_{3}+\al(n-1)E_2}{E_{1}+\al }\\
&\geq -\frac{(n-2)E_{3}+\al(n-1)E_2}{E_{1}+\al }\\
 &\geq -(n-2)\frac{E_{3}+\alpha E_2}{E_{1}+\al }+\frac{\al^2  E_{1}}{E_{1}+\al }-\alpha F.
}
Hence, using $(E_{2}+\alpha E_{1})^2\geq (E_{3}+\alpha E_2)(E_{1}+\alpha )$ by \autoref{lemma-new-Newton-ineq}, we find
\eq{
\partial_t F&\geq  \frac{1}{n}\De F+\frac{\al E_1}{E_{1}+\al }\( -\frac{|A|^2}{n}+\al \beta \)-(n-2)\beta\frac{E_3+\alpha E_2}{E_1+\al }-\alpha \beta F\\
&\geq \frac{1}{n}\De F-(n-2)\beta F^2-\alpha \beta F.
}
By the strong maximum principle, $F$ is positive for all $t\in (0,T)$.
\end{proof}

\section{Convergence}
\begin{lemma}\label{prop-monotonicity}Let $\Omega_t$ denote the domain whose boundary is $\cM_t$.
Along the flow \eqref{general-flow}, the value of $\sum_{i=0}^{k-1}\beta_i\ti W_{k-i}(\Omega_t)$ is preserved while $\sum_{i=0}^{k-1}\beta_i\ti W_{k+1-i}(\Omega_t)$ is monotone decreasing.
\end{lemma}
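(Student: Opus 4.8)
The plan is to verify the two claims by direct computation using the first variation formulas for quermassintegrals under a normal flow with speed $\mathcal{F}$, together with the Minkowski integral identities and the algebraic identities for $F$ established in \autoref{lem-algebraic-property-F} and \autoref{lemma-Newton-ineq}. Recall that along a flow $\dot{x}=\mathcal{F}\nu$ the general first variation formula for quermassintegrals reads
\eq{
\frac{d}{dt}W_k(\Omega_t)=\frac{n+1-k}{n+1}\int_{\cM_t}\mathcal{F}\,E_k(\ka)\,d\mu,\quad k=0,1,\ldots,n,
}
equivalently $\frac{d}{dt}\ti W_k(\Omega_t)=\frac{1}{n+1}\int \mathcal{F}E_k\,d\mu$; the case $k=0$ is the familiar $\frac{d}{dt}\mathrm{Vol}(\Omega_t)=\int\mathcal{F}\,d\mu$. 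Here $\mathcal{F}=\frac1F-u$. The strategy is then to combine these linearly with weights $\beta_i$ and simplify.

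\textbf{Preservation of $\sum_{i}\beta_i\ti W_{k-i}$.} First I would compute
\eq{
\frac{d}{dt}\sum_{i=0}^{k-1}\beta_i\ti W_{k-i}(\Omega_t)=\frac{1}{n+1}\int_{\cM_t}\(\frac1F-u\)\sum_{i=0}^{k-1}\beta_iE_{k-i}(\ka)\,d\mu.
}
By the definition of $F$ we have $F\sum_{i=0}^{k-1}\beta_iE_{k-1-i}=\sum_{i=0}^{k-1}\beta_iE_{k-i}$, hence $\frac1F\sum_{i=0}^{k-1}\beta_iE_{k-i}=\sum_{i=0}^{k-1}\beta_iE_{k-1-i}$. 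Therefore the integrand becomes $\sum_{i=0}^{k-1}\beta_iE_{k-1-i}-u\sum_{i=0}^{k-1}\beta_iE_{k-i}$. Applying the Minkowski formula \eqref{Minkowski-formula}, $\int E_{k-1-i}\,d\mu=\int uE_{k-i}\,d\mu$ for each $i$ with $0\le i\le k-1$ (note $k-1-i$ ranges over $0,\ldots,k-1\le n-1$, so the index constraint is satisfied), the two sums cancel term by term and the derivative is zero.

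\textbf{Monotonicity of $\sum_{i}\beta_i\ti W_{k+1-i}$.} Here I would compute
\eq{
\frac{d}{dt}\sum_{i=0}^{k-1}\beta_i\ti W_{k+1-i}(\Omega_t)=\frac{1}{n+1}\int_{\cM_t}\(\frac1F-u\)\sum_{i=0}^{k-1}\beta_iE_{k+1-i}(\ka)\,d\mu.
}
By the Minkowski formula applied to each term $\int uE_{k+1-i}\,d\mu=\int E_{k-i}\,d\mu$ (valid since $k-i$ runs over $1,\ldots,k\le n-1$), the $u$-part equals $\int\sum_{i=0}^{k-1}\beta_iE_{k-i}\,d\mu$. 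Writing $P_j:=\sum_{i=0}^{k-1}\beta_iE_{j-i}$ for $j\in\{k-1,k,k+1\}$, so that $F=P_k/P_{k-1}$, the derivative becomes $\frac{1}{n+1}\int\(\frac{P_{k+1}}{F}-P_k\)d\mu=\frac{1}{n+1}\int\frac{P_{k+1}P_{k-1}-P_k^2}{P_k}\,d\mu$. By the Newton–Maclaurin inequality of the second kind, \autoref{lemma-new-Newton-ineq}, we have $P_k^2\ge P_{k+1}P_{k-1}$ pointwise on $\cM_t$, and since $\cM_t$ is $(\al,k)$-convex we have $P_k=P_{k-1}(F+\al)>0$ (because $F>0$ and $P_{k-1}=\sum\beta_iE_{k-1-i}>0$ on $\Ga_{\al,k}$). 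Hence the integrand is $\le 0$ and the quantity is monotone decreasing.

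\textbf{Main obstacle.} The computation is essentially bookkeeping once the first variation formulas are in hand; the only genuinely non-routine input is the pointwise inequality $P_k^2\ge P_{k+1}P_{k-1}$, which is exactly \autoref{lemma-new-Newton-ineq} and was proved earlier, so no new difficulty arises there. The one point requiring care is the index range in the Minkowski identity: one must check that all the shifted indices $k-1-i$ and $k-i$ appearing stay within $\{0,\ldots,n-1\}$, which holds because $2\le k\le n-1$ and $0\le i\le k-1$; the extreme terms $E_n$ and $E_{k-1}$ with $k-1\ge 1$ cause no trouble. I would also remark that positivity of $F$ and of $P_{k-1}$ along the flow is guaranteed by \autoref{preserving (a,k)-convexity} and \autoref{boundary of Ga-al-k-cone}, so the quotient $P_k/P_k$ in the final integrand is well-defined and the sign analysis is valid for all $t$.
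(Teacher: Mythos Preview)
Your argument is correct and follows essentially the same route as the paper: compute the first variation of each $\ti W_j$, apply the Minkowski identities \eqref{Minkowski-formula} termwise, and invoke \autoref{lemma-new-Newton-ineq} for the sign. One small slip: in your positivity remark you write $P_k=P_{k-1}(F+\al)$, but by definition $F=P_k/P_{k-1}$, so in fact $P_k=F\,P_{k-1}$; the conclusion $P_k>0$ is unaffected.
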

\begin{proof}
Using the Minkowski integral identity \eqref{Minkowski-formula}, we compute
\eq{
  \frac{d}{dt} \sum_{i=0}^{k-1}\beta_i\ti W_{k-i}(\Omega_t)= \int_{\cM_t} \sum_{i=0}^{k-1}\beta_i E_{k-i}(\ka)\(\frac{\sum_{i=0}^{k-1}\beta_iE_{k-1-i}(\ka)}{\sum_{i=0}^{k-1}\beta_iE_{k-i}(\ka)}-u\) d\mu= 0.
} 
On the other hand,
\eq{\label{monotonicity x}
  &~\frac{d}{dt} \sum_{i=0}^{k-1}\beta_i\ti W_{k+1-i}(\Omega_t)\\
=&~\int_{\cM_t} \sum_{i=0}^{k-1}\beta_i E_{k+1-i}(\ka)\(\frac{\sum_{i=0}^{k-1}\beta_iE_{k-1-i}(\ka)}{\sum_{i=0}^{k-1}\beta_iE_{k-i}(\ka)}-u\) d\mu\\
=&~ \int_{\cM_t} \frac{\(\sum_{i=0}^{k-1}\beta_i E_{k+1-i}(\ka)\)\(\sum_{i=0}^{k-1}\beta_iE_{k-1-i}(\ka)\)-\(\sum_{i=0}^{k-1}\beta_iE_{k-i}(\ka)\)^2}{\sum_{i=0}^{k-1}\beta_iE_{k-i}(\ka)} d\mu\leq  0.
}
Here, we used the Minkowski integral identity, and the last inequality follows from the Newton-Maclaurin type inequality \eqref{new-Newton-type-ineq}.
\end{proof}

\begin{prop}\label{subsequence convergence}
    Along the flow \eqref{sum-hessian-flow}, we have 

    \eq{
    \limsup_{t\to\infty}\int_{\cM_t} \frac{\(\sum_{i=0}^{k-1}\beta_i E_{k+1-i}\)\(\sum_{i=0}^{k-1}\beta_iE_{k-1-i}\)-\(\sum_{i=0}^{k-1}\beta_iE_{k-i}\)^2}{\sum_{i=0}^{k-1}\beta_iE_{k-i}} d\mu=0.
    }

\end{prop}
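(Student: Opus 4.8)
The plan is to combine the monotonicity of $\mathcal{Q}(t):=\sum_{i=0}^{k-1}\beta_i\ti W_{k+1-i}(\Omega_t)$ from \autoref{prop-monotonicity} with the uniform a priori bounds from \autoref{prop-longtime-existence}. First I would observe that by \autoref{prop-longtime-existence} the flow exists for all times with uniform $C^\ell$ bounds for every $\ell$, and in particular (via \autoref{boundary of Ga-al-k-cone}, \autoref{preserving (a,k)-convexity}, \autoref{thm-C2-estimate}) the shifted principal curvatures $\hat\ka$ stay in a fixed compact subset $K\Subset\Ga_{\al,k}$; hence $\sum_{i=0}^{k-1}\beta_iE_{k-i}(\ka)$ is bounded above and below away from zero uniformly in $t$, and the surface area of $\cM_t$ is uniformly bounded. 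Consequently the integrand
\eq{
\mathcal{I}(t):=\int_{\cM_t} \frac{\(\sum_{i=0}^{k-1}\beta_i E_{k+1-i}\)\(\sum_{i=0}^{k-1}\beta_iE_{k-1-i}\)-\(\sum_{i=0}^{k-1}\beta_iE_{k-i}\)^2}{\sum_{i=0}^{k-1}\beta_iE_{k-i}} \,d\mu
}
is non-positive by \autoref{lemma-new-Newton-ineq}, and $\frac{d}{dt}\mathcal{Q}(t)=\mathcal{I}(t)\le 0$.

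Next I would integrate this in time: for any $T>0$,
\eq{
\mathcal{Q}(0)-\mathcal{Q}(T)=-\int_0^T \mathcal{I}(t)\,dt=\int_0^T |\mathcal{I}(t)|\,dt.
}
Since $\mathcal{Q}(t)$ is monotone decreasing and bounded below (it is a fixed positive combination of quermassintegrals of a region enclosed by hypersurfaces with uniformly bounded geometry and, by \autoref{prop-C0-estimate}, contained in a fixed annulus), $\mathcal{Q}(t)$ converges as $t\to\infty$, so $\int_0^\infty |\mathcal{I}(t)|\,dt<\infty$. This forces $\liminf_{t\to\infty}|\mathcal{I}(t)|=0$; to upgrade $\liminf$ to $\limsup=0$ I would show $\mathcal{I}(t)$ is uniformly continuous in $t$ (equivalently, $t\mapsto \mathcal{I}(t)$ has uniformly bounded derivative), which is immediate from the uniform $C^\ell$ estimates: $\mathcal{I}(t)$ is a smooth, time-independent functional expression of the uniformly bounded quantities $\ga,\ov\nabla\ga,\ov\nabla^2\ga$ and their time derivatives, and $\partial_t\ga$ together with all its spatial derivatives are uniformly bounded by \autoref{prop-longtime-existence}. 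A finite integral of a uniformly continuous nonnegative function tends to $0$, giving $\lim_{t\to\infty}|\mathcal{I}(t)|=0$, i.e. the claimed limit.

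The only genuinely delicate point is the passage from $\liminf=0$ to $\limsup=0$; everything else is bookkeeping with the already-established estimates. I expect this step to be routine here precisely because \autoref{prop-longtime-existence} supplies \emph{all} higher-order bounds uniformly in time, so the uniform continuity of $\mathcal{I}(t)$ (and of $\mathcal{Q}'(t)$) is automatic. An alternative that avoids uniform continuity altogether is to note that, by the uniform estimates, along any sequence $t_j\to\infty$ a subsequence of $\cM_{t_j}$ converges in $C^\infty$ to a limit hypersurface $\cM_\infty$; by the integrated decay $\mathcal{I}(t_j)\to 0$ along a further subsequence, and hence the (smooth, hence continuous in the $C^\infty$ limit) integrand vanishes on $\cM_\infty$, which by the equality case of \autoref{lemma-new-Newton-ineq} forces $\cM_\infty$ to be a sphere; but for the present proposition it suffices to record that $\mathcal{I}(t)\to 0$, which follows from the uniform-continuity argument above without identifying the limit.
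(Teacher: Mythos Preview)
Your proof is correct and the core argument matches the paper's: both exploit that $\mathcal{Q}'(t)=\mathcal{I}(t)\le 0$ together with a lower bound on $\mathcal{Q}$ coming from the uniform estimates, so that $\mathcal{I}$ cannot stay bounded away from zero (the paper phrases this as a contradiction: if $\limsup\mathcal{I}<0$ then $\mathcal{I}\le -\varepsilon$ eventually and integrating sends $\mathcal{Q}\to -\infty$). Note that your uniform-continuity step is unnecessary for the proposition as stated: since $\mathcal{I}(t)\le 0$, your intermediate conclusion $\liminf_{t\to\infty}|\mathcal{I}(t)|=0$ is already equivalent to $\limsup_{t\to\infty}\mathcal{I}(t)=0$, which is exactly the claim.
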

\begin{proof}
If that is not the case, by \autoref{prop-monotonicity}, for some $\varepsilon>0$ and $T_{\varepsilon}$, we have
\eq{
\int_{\cM_t} \frac{\(\sum_{i=0}^{k-1}\beta_i E_{k+1-i}\)\(\sum_{i=0}^{k-1}\beta_iE_{k-1-i}\)-\(\sum_{i=0}^{k-1}\beta_iE_{k-i}\)^2}{\sum_{i=0}^{k-1}\beta_iE_{k-i}} d\mu\leq -\varepsilon,~\forall t>T_{\varepsilon}.
}
Therefore, by \eqref{monotonicity x}, we have
\eq{
\int_{T_{\varepsilon}}^{s} \frac{d}{dt}\(\sum_{i=0}^{k-1}\beta_i\ti W_{k+1-i}(\Omega_t)\)dt\leq -\varepsilon (s-T_{\varepsilon}).
}
Sending $s\to\infty$ (cf. \autoref{prop-longtime-existence}), we obtain 
\eq{
\lim_{t\to \infty} \(\sum_{i=0}^{k-1}\beta_i\ti W_{k+1-i}(\Omega_t)\)=-\infty,
}
which is a contradiction.
\end{proof}

\begin{proof}[Proof of \autoref{main-thm-flow}]
In view of the uniform $C^{\ell}$ estimates (for all $\ell \in \bbN$), \autoref{subsequence convergence} and \autoref{lemma-new-Newton-ineq}, for a subsequence of times $\{t_i\}$,
$\M_{t_i}$ converges smoothly to a sphere. By \autoref{prop-monotonicity}, the radius $r_\infty$ is uniquely determined by 
    \eq{
    r_\infty:=g_{\alpha,k}^{-1}\(\sum_{i=0}^{k-1}\beta_i\ti W_{k-i}(\Omega_0)\).
    }
 Using \autoref{prop-monotonicity} again, we have
\eq{
\lim_{t\to \infty} &\(\sum_{i=0}^{k-1}\beta_i\ti W_{k+1-i}(\Omega_t)\)=\sum_{i=0}^{k-1}\beta_i\ti W_{k+1-i}(B_{r_{\infty}}).
}
Hence, by \autoref{lemma-new-Newton-ineq}, all limit hypersurfaces are spheres of radius $r_{\infty}$.

Next, we show that all limit hypersurfaces are centred at their origin. In view of  \autoref{prop-C0-estimate}, 
\eq{
\frac{d}{dt}\max_{\bbS^n} \ga(\cdot,t)\leq 0 \quad \text{and} \quad \frac{d}{dt}\min_{\bbS^n} \ga(\cdot, t)\geq 0.
}

Let $A:=\lim_{t\to\infty} \max_{\bbS^n} \ga $ and $B:=\lim_{t\to\infty} \min_{\bbS^n} \ga $. Suppose a limiting hypersurface is not centred at the origin, then
\eq{
e^B< r_\infty < e^A.
}
Since $F(r_\infty^{-1})=r_{\infty}^{-1}$, the Lipchitz functions $\max_{\bbS^n} \ga(\cdot,t)$ and $\min_{\bbS^n} \ga(\cdot,t)$, at their differentiability points (provided $t$ is large enough), would satisfy
\eq{
\frac{d}{dt}\max_{\bbS^n} \ga<\frac{1}{2}\(\frac{r_{\infty}}{e^A} -1\)\quad \text{and} \quad \frac{d}{dt}\min_{\bbS^n} \ga>\frac{1}{2}\(\frac{r_{\infty}}{e^B} -1\).
}
\end{proof}
 
\begin{proof}[Proof of \autoref{main-thm-ineq}]~

(i) Let $\Omega_0$ be a bounded domain with smooth boundary $\cM_0=\partial\Omega_0$, such that $\cM_0$ is star-shaped and $(\alpha,k)$-convex. In view of \autoref{prop-monotonicity} and \autoref{main-thm-flow}, we deduce the desired inequality \eqref{new-quermassintegral-ineq} as follows: 
    \eq{
    \sum_{i=0}^{k-1}\beta_i\ti W_{k+1-i}(\Omega_0)
    &\geq \sum_{i=0}^{k-1}\beta_i\ti W_{k+1-i}(B_{r_\infty})\\
    &=f_{\al,k}\bigg(\sum_{i=0}^{k-1}\beta_i\ti W_{k-i}(B_{r_\infty})\bigg)\\
    &=f_{\al,k}\bigg(\sum_{i=0}^{k-1}\beta_i\ti W_{k-i}(\Omega_0)\bigg).
    }
    Moreover, if equality holds, then the following equality holds on $\cM_0$:
      \eq{
\bigg(\sum_{i=0}^{k-1}\beta_i E_{k+1-i}(\ka)\bigg)\bigg(\sum_{i=0}^{k-1}\beta_iE_{k-1-i}(\ka)\bigg)=\bigg(\sum_{i=0}^{k-1}\beta_iE_{k-i}(\ka)\bigg)^2.
    }
    Since $\hat{\ka}\in\Ga_{\alpha,k}$, owing to \autoref{lemma-new-Newton-ineq} we conclude that $\cW=a\mathrm{I}$ for some constant $a>0$, and hence $\Omega_0$ is a ball.  

    (ii) In view of  \autoref{approximation}, the inequality \eqref{new-quermassintegral-ineq} also holds for star-shaped, weakly $(\alpha,2)$-convex hypersurfaces.
    
    Suppose that $\cM$ is a star-shaped, weakly $(\alpha,k)$-convex hypersurface such that the equality in \eqref{new-quermassintegral-ineq} holds. Let 
    \eq{
    \cM_{+}:=\{ x\in \cM :\hat{\ka}(x)\in\Ga_{\alpha,k} \}.
    }
  It is clear that $\cM_{+}$ is open. Since $\cM$ is compact and embedded, there exists an elliptic point in $\cM$, so $\cM_{+}$ is nonempty. We will employ an argument of Guan-Li \cite{GL09} to show that $\cM_+=\cM$:
    
    We claim that $\cM_{+}$ is closed. Choose $\eta\in C^2(\cM)$ with compact support in $\cM_+$. Define the variation $y_s=x+s\eta(x) \nu(x)$, where $x$ is the position vector of $\cM$ and $\nu$ is the unit outward normal of $\cM$ at $x$. Let $\Omega_s$ be the domain enclosed by $\cM_s$ whose position vector is $y_s$. For sufficiently small $s$, $\cM_s$ is star-shaped and weakly $(\alpha,k)$-convex, and the equality holds for $\Omega_0=\Omega$. Thus, we have

\eq{
0&=\frac{d}{ds}\Big|_{s=0}\left[ \sum_{i=0}^{k-1}\beta_i\ti W_{k+1-i}(\Omega_s)-f_{\al,k}\bigg( \sum_{i=0}^{k-1}\beta_i\ti W_{k-i}(\Omega_s)\bigg)\right]\\
&=\int_{\cM} \bigg(\sum_{i=0}^{k-1}\beta_i E_{k+1-i}(\ka)-c\sum_{i=0}^{k-1}\beta_i E_{k-i}(\ka)\bigg)\eta d\mu
}
for some constant $c>0$. Since $\eta$ is arbitrary, on $\cM_+$ we have
    \eq{
   \sum_{i=0}^{k-1}\beta_i E_{k+1-i}(\ka)&=c\sum_{i=0}^{k-1}\beta_i E_{k-i}(\ka)\\
   &=c\(\frac{E_k(\hat{\ka})}{E_{k-1}(\hat{\ka})}-\al\)E_{k-1}(\hat{\ka})>0. 
    }
    On the other hand, by \eqref{new-Newton-type-ineq}, we have
    \eq{
   \bigg(\sum_{i=0}^{k-1}\beta_i E_{k-i}(\ka)\bigg)^2\geq \bigg(\sum_{i=0}^{k-1}\beta_iE_{k+1-i}(\ka)\bigg)  \bigg(\sum_{i=0}^{k-1}\beta_iE_{k-1-i}(\ka)\bigg),
    }
    which implies that
    \eq{
 \sum_{i=0}^{k-1}\beta_i E_{k-i}(\ka)\geq c \sum_{i=0}^{k-1}\beta_iE_{k-1-i}(\ka) =c E_{k-1}(\hat{\ka}).
    }
That is,
\eq{
\frac{E_k(\hat{\ka})}{E_{k-1}(\hat{\ka})}-\al \geq c.
}
Hence,
\eq{
E_{k-1}^{\frac{1}{k-1}}(\hat{\ka})\geq \al+c,\q E_i^{\frac{1}{i}}(\hat{\ka})>\al+c\q \forall i=1,\ldots,k-2.
}
Therefore, $\cM_+$ is closed and $\cM_+=\cM$. By the previous case, $\cM$ is a sphere.
\end{proof}

\section{A Poincar\'{e} type inequality}\label{sec: poincare-type inequalities}
In this section, we prove the following theorem.
\begin{thm-}
Let $1< k\leq n$ and $\cM$ be a $k$-convex hypersurface. Then for all $f\in C^2(\cM)$ we have
\eq{\label{k-inequality}
\int \frac{(\operatorname{tr}_{\dot{\sigma}_k}(\operatorname{Hess}f))^2}{\si_k}d\mu \geq k\int \langle \cW(\nabla f),\nabla f\rangle_{\dot{\sigma}_k}d\mu.
}
\end{thm-}
\begin{lemma} \label{lemma for poincare} Let $1<k\leq n$
and $\cM$ be a closed, embedded, smooth hypersurface.
For every $f,\psi\in C^2(\cM)$, we have
\eq{
\int \si_k^{ij,pq}f_{;ij}\psi_{;pq} d\mu= (k-1)\int\si_k^{im}h_m^j\partial_i \psi\partial_j f d\mu.
}
\end{lemma}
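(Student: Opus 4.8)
The plan is to integrate by parts once, commute a pair of covariant derivatives, and then use the Gauss equation to turn the resulting curvature term into the second fundamental form, so that the whole identity collapses onto two algebraic contraction identities for $\sigma_k^{ij,pq}$. I work in a local orthonormal frame and write $I$ for the left-hand side; since $\cM$ is closed there are no boundary terms.

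First I would write $\psi_{;pq}=\nabla_p\nabla_q\psi$ and integrate by parts in the index $p$:
\[
I=-\int \nabla_p\!\big(\sigma_k^{ij,pq}f_{;ij}\big)\,\nabla_q\psi\, d\mu=-\int\Big((\nabla_p\sigma_k^{ij,pq})f_{;ij}+\sigma_k^{ij,pq}\,\nabla_p\nabla_i\nabla_j f\Big)\nabla_q\psi\, d\mu .
\]
The first term vanishes: $\nabla_p\sigma_k^{ij,pq}=\tfrac{\partial\sigma_k^{ij,pq}}{\partial h_{ab}}\nabla_p h_{ab}$, and by the Codazzi equation $\nabla_p h_{ab}$ is symmetric in $p,a$, whereas the generalized Kronecker-delta representation of $\tfrac{\partial\sigma_k^{ij,pq}}{\partial h_{ab}}$ is antisymmetric in the two upper slots occupied by $p$ and $a$; this is the one-order-higher analogue of the divergence-free property $\nabla_i\sigma_k^{ij}=0$ of the Newton tensor. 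Hence $I=-\int \sigma_k^{ij,pq}\,(\nabla_p\nabla_i\nabla_j f)\,\nabla_q\psi\, d\mu$.

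Next, because $f_{;ij}$ and $\psi_{;pq}$ are symmetric, I may replace $\sigma_k^{ij,pq}$ by $\tfrac{1}{(k-2)!}\,\delta^{ip\,i_3\cdots i_k}_{jq\,j_3\cdots j_k}h^{j_3}_{i_3}\cdots h^{j_k}_{i_k}$, which is manifestly antisymmetric under $i\leftrightarrow p$. Consequently only the part of $\nabla_p\nabla_i\nabla_j f$ antisymmetric in $p,i$ contributes, and the Gauss equation for a hypersurface in $\bbR^{n+1}$ gives $[\nabla_p,\nabla_i]\nabla_j f=h_{pj}\,h_i^l\nabla_l f-h_{ij}\,h_p^l\nabla_l f$, so
\[
\sigma_k^{ij,pq}\,\nabla_p\nabla_i\nabla_j f=\tfrac12\,\sigma_k^{ij,pq}\big(h_{pj}\,h_i^l\nabla_l f-h_{ij}\,h_p^l\nabla_l f\big).
\]
It remains to compute $\sigma_k^{ij,pq}h_{ij}$ and $\sigma_k^{ij,pq}h_{pj}$. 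The first is immediate from Euler's identity, since $\sigma_k^{pq}$ is homogeneous of degree $k-1$ in $h$: $\sigma_k^{ij,pq}h_{ij}=(k-1)\sigma_k^{pq}$. The second is a ``crossed'' contraction which one checks in an eigenbasis of $\mathcal W$, using $\sum_{p\neq i}\kappa_p\,\sigma_{k-2}(\kappa\mid i,p)=(k-1)\,\sigma_{k-1}(\kappa\mid i)$, and equals $\sigma_k^{ij,pq}h_{pj}=-(k-1)\sigma_k^{iq}$. Substituting, both terms equal $-\tfrac{k-1}{2}\,\sigma_k^{iq}h_i^l\nabla_l f$, hence $\sigma_k^{ij,pq}\nabla_p\nabla_i\nabla_j f=-(k-1)\sigma_k^{iq}h_i^l\nabla_l f$, and plugging back gives $I=(k-1)\int \sigma_k^{iq}h_i^l\,\partial_l f\,\partial_q\psi\, d\mu$, which is the asserted identity after relabeling.

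I expect the main difficulty to be the last step: verifying the two contraction identities for $\sigma_k^{ij,pq}$ and, above all, getting every sign right — the conventions for the Gauss equation and for the curvature commutator, the sign produced by the ``crossed'' contraction, and the sign from the single integration by parts — so that the two curvature contributions add rather than cancel. A reassuring check is the case $k=2$: there $\sigma_2^{ij,pq}f_{;ij}\psi_{;pq}=(\Delta f)(\Delta\psi)-\langle\operatorname{Hess}f,\operatorname{Hess}\psi\rangle$, $\dot{\sigma}_2=T_1=\mathcal H\,\mathrm{I}-\mathcal W$, so $\sigma_2^{im}h_m^j=\mathcal H h^{ij}-(h^2)^{ij}=\operatorname{Ric}^{ij}$ by Gauss, and the identity reduces to the polarized, integrated Bochner formula on a closed manifold.
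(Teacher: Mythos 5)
Your argument is correct, and it is a genuinely different route from the one taken in the paper. The paper proves this identity by a variational computation: it extends $f$ to a tubular neighbourhood as a function constant along normals, notes that $\int_{\cM_t}\si_k^{ij}\bar f_{;ij}\,d\mu$ vanishes identically in $t$ along a normal variation $\dot x=-\psi\nu$ (because the Newton tensor is divergence-free), and then expands $\tfrac{d}{dt}\big|_{t=0}$ using the evolution equations of $g_{ij}$, $h_i^j$, $d\mu$ together with the eigenbasis formulas for $\si_k^{ij,pq}(h^2)_{pq}$ and the Newton operator recursion $\si_{k+1}^{ij}=\si_k g^{ij}-\si_k^{jm}h_m^i$. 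By contrast, you work directly on $\cM$: you integrate by parts once, kill the resulting divergence term by the higher-order analogue $\nabla_p\si_k^{ij,pq}=0$ of Codazzi divergence-freeness, then use the antisymmetry of the generalized-Kronecker-delta representative in $i\leftrightarrow p$ to isolate the curvature commutator, and finish with the two algebraic contractions $\si_k^{ij,pq}h_{ij}=(k-1)\si_k^{pq}$ and $\si_k^{ij,pq}h_{pj}=-(k-1)\si_k^{iq}$. Both are of comparable length; yours is more self-contained and makes the role of Codazzi and Gauss transparent, while the paper's variational device plugs more naturally into the flow machinery used elsewhere in the article and avoids handling the non-uniqueness of $\si_k^{ij,pq}$ as a tensor. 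One presentational caveat worth tightening: since the Kronecker-delta representative $B^{ijpq}$ and the symmetrized $\si_k^{ij,pq}$ agree only after full contraction against tensors symmetric in $(ij)$ and in $(pq)$, you should make the replacement $\si_k^{ij,pq}\rightsquigarrow B^{ijpq}$ \emph{before} integrating by parts (i.e.\ in $\int\si_k^{ij,pq}f_{;ij}\psi_{;pq}\,d\mu=\int B^{ijpq}f_{;ij}\psi_{;pq}\,d\mu$), and then carry out the whole computation with $B$; the divergence-free property and the $i\leftrightarrow p$ antisymmetry you invoke are properties of $B$, not of the symmetric second derivative. With that reordering, the signs check out: $B^{ijpq}\nabla_p\nabla_i\nabla_j f=\tfrac12 B^{ijpq}\big(h_{jp}h_i^l-h_{ij}h_p^l\big)\nabla_l f=-(k-1)\si_k^{iq}h_i^l\nabla_l f$, and the single integration by parts flips the sign to give $I=(k-1)\int\si_k^{iq}h_i^l\,\partial_l f\,\partial_q\psi\,d\mu$, as claimed. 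Your $k=2$ consistency check via the polarized Bochner formula is exactly the observation the paper records in its Remark after the lemma.
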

\begin{proof}
The tubular neighborhood of an embedded hypersurface $\cM\subset \bbR^{n+1}$ can be defined by
\eq{
\Phi: \cM \times (-\varepsilon,\varepsilon) &\ra U_{\varepsilon}\subset \bbR^{n+1},\\
      (x,s) \quad &\mapsto x+s\nu(x),
}
where $\nu(x)$ is the unit outward normal at the point $x\in \cM$. Here, $\varepsilon>0$ is a small constant, which is guaranteed by the fact that the map is a one-to-one correspondence. Thus, for any $y\in U_\varepsilon$, there exists a unique point $x\in \cM$ such that $y=x+s\nu(x)$.  The function $f$ on $\cM$ can be extended to $U_{\varepsilon}$ as $\bar f(y)=f(x)$. By the Gauss-Weingarten formula, we have
\eq{
\operatorname{Hess}^{\bbR^{n+1}}_{ij} \bar f &=\operatorname{Hess}_{ij} f+\frac{\partial \bar f}{\partial \nu}h_{ij}=f_{;ij}.
}

Now, along the normal variation $\dot{x}|_{t=0}=-\psi \nu$, at time $t=0$ we have
\eq{
\partial_t g_{ij} &=-2\psi h_{ij},\\
\partial_t g^{ij} &=2\psi h^{ij},\\
\partial_t h_i^j &= \psi_{;i}{}^j+\psi (h^2)_i^j,\\
\partial_t d\mu &=-\psi\cH  d\mu,
}
where $\cH$ denotes the mean curvature of $\cM$. 
Moreover, we have
\eq{
\partial_t \si_k^{ij}=\partial_t (g^{im}(\si_k)_m^j)
=&~g^{im}\frac{\partial^2 \si_k}{\partial h_j^m \partial h_r^s} \partial_t h_r^s+(\partial_t g^{im}) (\si_k)_m^j\\
=&~\si_k^{ij,pq}(\psi_{;pq}+\psi(h^2)_{pq})+2\psi \si_k^{jm}h_m^i.
}
Therefore,
\eq{
0= \frac{d}{dt}\Big|_{t=0}\int_{\cM_t} \si_k^{ij}\bar f_{;ij} d\mu=&~\int_{\cM} \si_k^{ij,pq}f_{;ij} \psi_{;pq}+\psi\si_k^{ij,pq}(h^2)_{pq}f_{;ij} d\mu\\
&~+\int_{\cM} 2\psi \si_k^{jm}h_m^i f_{;ij}-\si_k^{ij}(\partial_t \Ga_{ij}^m) \partial_m f-\psi\cH \si_k^{ij}f_{;ij} d\mu.
}
In an orthonormal basis that diagonalizes $h_{ij}$, using \cite[(4.17)]{GM03}, we calculate
\eq{
 \psi \si_k^{ij,pq}(h^2)_{pq}f_{;ij}=&~\psi \sum_{i=1}^n\sum_{p=1,p\neq i}^{n}\frac{\partial \si_{k-1}(\ka|i)}{\partial \ka_p}\ka_p^2f_{;ii}\\
=&~ \psi \sum_{i=1}^n (\si_1(\ka|i)\si_{k-1}(\ka|i)-k\si_k(\ka|i))f_{;ii} \\
=&~ \psi (\si_2^{im}(\si_k)_m^i-k\si_{k+1}^{ij})f_{;ij} \\
=&~ \psi (\cH \si_k^{ij}-(\si_k)^{jm}h_m^{i}-k\si_{k+1}^{ij})f_{;ij},
}
where we used $\si_2^{ij}=\cH g^{ij}-h^{ij}$ by the definition of the Newton operator
\eq{ \label{Newton-operator}
\si_{k+1}^{ij}=\si_k g^{ij}-\si_k^{jm}h_m^i, \quad 1\leq k<n,
} 
and $\si_1^{ij}=g^{ij}$, and we also take $\si_{n+1}^{ij}=0$ by convention.
We also have
\eq{
\int_{\cM} -\si_k^{ij}(\partial_t \Ga_{ij}^m) \partial_mf  d\mu&= \int_{\cM} \si_k^{ij}(2\partial_i \psi h_j^m- \partial_l \psi g^{ml} h_{ij}+\psi h_{ij;}{}^m)\partial_m f  d\mu\\
&=\int_{\cM} 2\si_k^{ij}h_j^m \partial_i \psi \partial_m f -k\si_k \langle \nabla \psi,\nabla f\rangle+\psi \langle \nabla \si_k,\nabla f\rangle d\mu\\
&=\int_{\cM} 2\si_k^{ij}h_j^m \partial_i \psi \partial_m f-(k+1)\si_k \langle \nabla \psi,\nabla f\rangle-\si_k \psi \De f d\mu.
}
Hence,
\eq{
  0
=&~\int_{\cM} \si_k^{ij,pq}f_{;ij}\psi_{;pq}+ (2\si_k^{im}h_m^j-(k+1)\si_k g^{ij})\partial_i \psi \partial_j f d\mu\\
 &~+\int_{\cM} \psi (-k\si_{k+1}^{ij}+\si_k^{jm}h_m^i-\si_k g^{ij})f_{;ij} d\mu\\
=&~\int_{\cM} \si_k^{ij,pq}f_{;ij}\psi_{;pq} -(k-1) \si_k^{im}h_m^j\partial_i \psi \partial_j f  d\mu,
}
where we used \eqref{Newton-operator} in the last equality.
\end{proof}
\begin{rem}\label{k=2, Bochner formula}
If we take $k=2$ and $f=\psi$ in \autoref{lemma for poincare}, then we recover the Bochner formula:
\eq{
\int (\De f)^2-|\operatorname{Hess}f|^2d\mu=\int \si_2^{im}h_m^j\partial_if \partial_jfd\mu =\int \operatorname{Ric}(\nabla f,\nabla f)d\mu.
}
\end{rem}

\begin{proof}[Proof of \autoref{thm: k-inequality}]
 By the concavity of $\si_k^{1/k}$ in $\Ga_k$, we have
\eq{
\si_k^{ij,pq} f_{;ij}f_{;pq}\leq \frac{k-1}{k}\frac{(\si_k^{ij}f_{;ij})^2}{\si_k}.
}
Now take $\psi=f$ in \autoref{lemma for poincare}.
\end{proof}

\begin{rem}
Let $f=c+\langle x, w\rangle$ where $w\in \bbR^{n+1}$ is a fixed vector, $c$ is a constant and $x$ is the position vector of $\cM$. Then
\eq{
\si_k^{ij,pq} f_{;ij}f_{;pq}=\frac{k-1}{k}\frac{(\si_k^{ij}f_{;ij})^2}{\si_k}.
}
\end{rem}

\begin{cor}Let $1< k\leq n$ and $\cM$ be a convex hypersurface. Then
\eq{
\int \frac{|\nabla \sigma_k|^2}{\sigma_k}d\mu\geq \int \sum_{i<j} \frac{\si_k^{ii}\si_k^{jj}}{\si_k}\ka_i\ka_j(\ka_i-\ka_j)^2d\mu.
}
\end{cor}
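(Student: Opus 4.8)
The plan is to deduce this corollary from \autoref{thm: k-inequality} by testing it against the family of functions $f_w:=\langle\nu,w\rangle$, $w\in\bbR^{n+1}$, and then summing the resulting inequalities as $w$ ranges over a fixed orthonormal basis $w_1,\dots,w_{n+1}$ of $\bbR^{n+1}$. Each $f_w$ is smooth on $\cM$, so the test is admissible, and a convex (in particular $k$-convex) hypersurface satisfies the hypotheses of \autoref{thm: k-inequality}; convexity also forces $\ka_i\ka_j\geq 0$, which is what makes the right-hand side of the claimed inequality nonnegative.

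The first step is to compute the covariant derivatives of $f_w$ on $\cM$. The Weingarten relation $D_{e_i}\nu=h_i^{\ell}e_{\ell}$ gives $\nabla_i f_w=h_i^{\ell}\langle e_{\ell},w\rangle$, and differentiating once more with the Codazzi and Gauss equations yields
\eq{
(\operatorname{Hess}f_w)_{ij}=(\nabla^{\ell}h_{ij})\langle e_{\ell},w\rangle-(h^2)_{ij}\langle\nu,w\rangle .
}
Contracting with $\dot{\si}_k$ and using $\si_k^{ij}\nabla^{\ell}h_{ij}=\nabla^{\ell}\si_k$ and $\si_k^{ij}(h^2)_{ij}=\sum_i\si_k^{ii}\ka_i^2$, this becomes
\eq{
\operatorname{tr}_{\dot{\si}_k}(\operatorname{Hess}f_w)=\langle\nabla\si_k,w\rangle-\Big(\sum_i\si_k^{ii}\ka_i^2\Big)\langle\nu,w\rangle ,
}
while, in an eigenframe of $\cW$, $\langle\cW(\nabla f_w),\nabla f_w\rangle_{\dot{\si}_k}=\sum_i\si_k^{ii}\ka_i^3\langle e_i,w\rangle^2$.

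The second step is to sum over $w\in\{w_1,\dots,w_{n+1}\}$, using $\sum_a\langle v,w_a\rangle\langle v',w_a\rangle=\langle v,v'\rangle$ for all $v,v'\in\bbR^{n+1}$. Since $\nabla\si_k$ is tangent to $\cM$ we have $\langle\nabla\si_k,\nu\rangle=0$, so the cross term drops and
\eq{
\sum_a\big(\operatorname{tr}_{\dot{\si}_k}(\operatorname{Hess}f_{w_a})\big)^2=|\nabla\si_k|^2+\Big(\sum_i\si_k^{ii}\ka_i^2\Big)^2 ,\qquad \sum_a\langle\cW(\nabla f_{w_a}),\nabla f_{w_a}\rangle_{\dot{\si}_k}=\sum_i\si_k^{ii}\ka_i^3 .
}
Summing the inequality of \autoref{thm: k-inequality} over $a$, and using the Euler relation $k\si_k=\sum_i\si_k^{ii}\ka_i$ to rewrite the right-hand side, I would arrive at
\eq{
\int\frac{|\nabla\si_k|^2}{\si_k}\,d\mu\geq \int\frac{1}{\si_k}\left[\Big(\sum_i\si_k^{ii}\ka_i\Big)\Big(\sum_i\si_k^{ii}\ka_i^3\Big)-\Big(\sum_i\si_k^{ii}\ka_i^2\Big)^2\right]d\mu .
}
Expanding $(\ka_i-\ka_j)^2$ shows the bracket is precisely $\sum_{i<j}\si_k^{ii}\si_k^{jj}\ka_i\ka_j(\ka_i-\ka_j)^2$, which gives the assertion.

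I do not expect a genuine obstacle — this is why the statement is only a corollary. The one point calling for care is the second-derivative identity for $f_w$, in particular the sign of the term $(h^2)_{ij}\langle\nu,w\rangle$ (which is pinned down, e.g., by differentiating $|\nu|^2\equiv 1$); it is worth noting that this normal term disappears from the final identity exactly because $\nabla\si_k$ is tangential. Before committing to the computation one should also double-check the index/sign conventions for $\operatorname{tr}_{\dot{\si}_k}$ and $\langle\cW(\cdot),\cdot\rangle_{\dot{\si}_k}$ in \autoref{thm: k-inequality} against those used here.
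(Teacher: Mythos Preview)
Your proposal is correct and follows essentially the same route as the paper: apply \autoref{thm: k-inequality} to $f_\ell=\langle\nu,v_\ell\rangle$ for an orthonormal basis $\{v_\ell\}$ of $\bbR^{n+1}$, sum over $\ell$ using the derivative formulas $\nu_{;ij}=(\nabla^m h_{ij})e_m-(h^2)_{ij}\nu$ to obtain \eqref{stability}, then rewrite $k\si_k=\sum_i\si_k^{ii}\ka_i$ and invoke the algebraic identity $\big(\sum_i\si_k^{ii}\ka_i\big)\big(\sum_i\si_k^{ii}\ka_i^3\big)-\big(\sum_i\si_k^{ii}\ka_i^2\big)^2=\sum_{i<j}\si_k^{ii}\si_k^{jj}\ka_i\ka_j(\ka_i-\ka_j)^2$. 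The only cosmetic difference is that the paper phrases this last identity via the Lagrange form of Cauchy--Schwarz with $a_i=\sqrt{\si_k^{ii}}\,\ka_i^{3/2}$, $b_i=\sqrt{\si_k^{ii}}\,\ka_i^{1/2}$, whereas you expand directly; either way the identity is purely algebraic.
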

\begin{proof}
Note that
\eq{
\nu_{;i}&=h_i^m e_m\\
\nu_{;ij}&=(\nabla^m h_{ij})e_m-(h^2)_{ij}\nu.
}
Let $\{v_{\ell}\}$ be an orthonormal basis of $\bbR^{n+1}$.
Applying \eqref{k-inequality} to each  $f_{\ell}=\langle \nu,v_{\ell}\rangle$ for $\ell=1,\ldots, n+1$ and then summing over $\ell$ we obtain
	\eq{\label{stability}
	\int \frac{|\nabla \sigma_k|^2+(\sum \sigma_k^{ii}\kappa_i^2)^2}{\sigma_k}d\mu\geq k\int \sum \sigma_k^{ii}\kappa_i^3d\mu.
	}
Now recall that for any constants $a_i,b_j\in \bbR$, there holds
\eq{
0 \leq  \frac{1}{2}\sum_{i,j=1}^{n}(a_ib_j-a_jb_i)^2
   =  \sum_i a_i^2 \sum_j b_j^2 -\big(\sum_{i} a_ib_i \big)^2.
}
Let $a_i= \sqrt{\si_k^{ii}}\ka_i^{\frac{3}{2}}$ and $b_j=\sqrt{\si_k^{jj}}\ka_j^\frac{1}{2}$. Then we have
\eq{
\sum_{i} \si_k^{ii}\ka_i^3 \sum_{i} \si_k^{ii}\ka_i-\big(\sum_{i} \si_k^{ii}\ka_i^2\big)^2=&~\frac{1}{2}\sum_{i,j=1}^n \si_k^{ii}\si_k^{jj} (\ka_i^\frac{3}{2}\ka_j^{\frac{1}{2}}-\ka_j^\frac{3}{2}\ka_i^{\frac{1}{2}})^2 \\
=&~\frac{1}{2}\sum_{i,j=1}^n \si_k^{ii}\si_k^{jj} \ka_i\ka_j(\ka_i-\ka_j)^2 \\
=&~\sum_{i<j} \si_k^{ii}\si_k^{jj} \ka_i\ka_j(\ka_i-\ka_j)^2.
}
Hence, for $1<k\leq n$:
\eq{
\int \frac{|\nabla \sigma_k|^2}{\sigma_k}d\mu\geq \int \sum_{i<j} \frac{\si_k^{ii}\si_k^{jj}}{\si_k}\ka_i\ka_j(\ka_i-\ka_j)^2d\mu.
}
\end{proof}
\section*{Acknowledgment}
The first author's work was supported by the National Key Research and Development Program of China 2021YFA1001800, the National Natural Science Foundation of China 12101027, and the Fundamental Research Funds for the Central Universities. Both authors were supported by the Austrian Science Fund (FWF) under Project P36545. 

\providecommand{\bysame}{\leavevmode\hbox to3em{\hrulefill}\thinspace}

\vspace{10mm}
\textsc{School of Mathematical Sciences, Beihang University,\\ Beijing 100191, China}
\email{\href{mailto:huyingxiang@buaa.edu.cn}{huyingxiang@buaa.edu.cn}}
	
	\vspace{3mm}
	\textsc{Institut f\"{u}r Diskrete Mathematik und Geometrie,\\ Technische Universit\"{a}t Wien, Wiedner Hauptstra\ss e 8-10,\\ 1040 Wien, Austria,}
	\email{\href{mailto:yingxiang.hu@tuwien.ac.at}{yingxiang.hu@tuwien.ac.at}}
	
	\vspace{3mm}
\textsc{Institut f\"{u}r Diskrete Mathematik und Geometrie,\\ Technische Universit\"{a}t Wien, Wiedner Hauptstra{\ss}e 8-10,\\ 1040 Wien, Austria,} \email{\href{mailto:mohammad.ivaki@tuwien.ac.at}{mohammad.ivaki@tuwien.ac.at}}

\end{document}